%% file: main.tex
\documentclass[letterpaper]{article}

\usepackage{mathtools}
\usepackage{amsmath,amssymb,amsthm}
\usepackage{graphicx}
\usepackage{float}
\usepackage{tikz}
\usetikzlibrary{arrows.meta,calc,decorations.markings}

\usepackage{cite}
\usepackage[hyphens]{url}

\usepackage{xcolor}
\usepackage{hyperref}

\definecolor{GoogleBlueText}{HTML}{1A73E8}
\definecolor{GoogleGreenText}{HTML}{137333}
\definecolor{GoogleRedText}{HTML}{C5221F}
\definecolor{GoogleAmberText}{HTML}{B06000}

\hypersetup{
    pdftitle={Coloring Monomial Cayley Graphs: The One-Parameter Case},
    pdfauthor={Jonathan Davidson},
    pdfsubject={Algebraic methods for coloring unit-distance graphs related to the Hadwiger-Nelson problem},
    pdfkeywords={Hadwiger-Nelson problem, chromatic number of the plane, unit-distance graphs, monomial Cayley graphs, circular chromatic number, algebraic graph theory},
    colorlinks=true,
    linkcolor=GoogleRedText,
    citecolor=GoogleRedText,
    urlcolor=GoogleBlueText,
    filecolor=GoogleAmberText
}

\newtheorem{theorem}{Theorem}[section]
\newtheorem{proposition}[theorem]{Proposition}
\newtheorem{lemma}[theorem]{Lemma}
\newtheorem{corollary}[theorem]{Corollary}
\newtheorem{conjecture}[theorem]{Conjecture}

\theoremstyle{definition}
\newtheorem{definition}[theorem]{Definition}

\theoremstyle{remark}
\newtheorem*{remark}{Remark}

\newcommand{\C}{\mathbb{C}}
\newcommand{\R}{\mathbb{R}}
\newcommand{\Q}{\mathbb{Q}}
\newcommand{\Z}{\mathbb{Z}}
\newcommand{\N}{\mathbb{N}}
\newcommand{\T}{\mathbb{T}}

\newcommand{\Cay}{\operatorname{Cay}}
\newcommand{\Mon}{\operatorname{Mon}}
\newcommand{\MonCay}{\operatorname{MonCay}}

\DeclarePairedDelimiter{\set}{\{}{\}}

\title{Coloring Monomial Cayley Graphs: \\ The One-Parameter Case}
\author{Jonathan Davidson\thanks{
Email: \texttt{jonathan.j.davidson@gmail.com}.
Website: \protect\url{https://jjdavidson.github.io}.}}
\date{October 2026}

\begin{document}

\maketitle

\begin{abstract}
	How do algebraic relations among unit directions influence the chromatic number of the plane?
	We introduce monomial Cayley graphs as a framework for investigating this question, beginning with graphs whose allowed unit steps are signed powers of a single complex parameter.
	Transcendental parameters give bipartite graphs, while for roots of unity we determine both the chromatic and circular chromatic numbers.
	As a geometric consequence, every graph admitting a unit-distance embedding in the plane in which the angle between any two edge directions is a rational multiple of $\pi$ is three-colorable.
	For algebraic noninteger parameters, four colors suffice, and three suffice when the primitive integer minimal polynomial has leading coefficient at least three.
	We conjecture that three colors suffice for every algebraic noninteger direction parameter and that four suffice for every direction parameter.
\end{abstract}

\input{1-introduction.tex}

\input{2-background.tex}

\input{3-cyclotomic.tex}

\input{4-noncyclotomic.tex}

\input{5-conclusion.tex}

\section*{Disclosure of AI Assistance}

AI tools assisted with drafting portions of the text, preparing TikZ figures, and developing code to verify the sextic witness.
They also assisted with constructing and checking the monomial Cayley graph embeddings of the de Grey and Parts graphs and with exploratory work on higher-order cyclotomic coloring tests.
ChatGPT 6 was used in attempts to resolve Conjectures~\ref{conj:four-color} and~\ref{conj:noninteger-three-color}, but did not resolve either conjecture.
The author takes responsibility for the mathematical arguments, computational claims, figures, references, and final text.

\bibliographystyle{plain}
\begingroup
\raggedright
\bibliography{references}
\endgroup

\clearpage
\appendix
\input{6-appendix.tex}

\end{document}

%% file: 1-introduction.tex

\section{Introduction}

The Hadwiger--Nelson problem asks how many colors are needed to color every point of the plane so that points at distance one receive different colors.
This number is the \emph{chromatic number of the plane}, denoted $\chi(\R^2)$.
A \emph{unit-distance graph} is a graph whose vertices can be placed at distinct points of the plane so that the endpoints of every edge are one unit apart.
Every coloring of the plane restricts to a coloring of such a graph, so a finite unit-distance graph can provide a lower bound for $\chi(\R^2)$.
In 2018, de Grey constructed a five-chromatic unit-distance graph~\cite{deGrey2018}, raising the lower bound for the chromatic number of the plane to five.
In a preprint publicly released on October 6, 2026, OpenAI reports a proof that the plane admits no proper five-coloring, without any regularity assumption on the color classes~\cite{OpenAI2026Plane}.
Together with the seven-coloring obtained from a regular hexagonal tiling, illustrated in Figure~\ref{fig:hex-coloring}, this gives
\[
6\leq\chi(\R^2)\leq7.
\]
Soifer gives an extensive account of the earlier history and development of this problem~\cite{Soifer2024}.

\begin{figure}[ht]
    \centering
    \input{tikz-code/hex-coloring.tex}
    \caption{A seven-coloring of the plane using a hexagonal tiling. The tiles can be scaled so that each has diameter less than one and distinct tiles of the same color are more than one unit apart.}
    \label{fig:hex-coloring}
\end{figure}

De Grey's construction prompted the Polymath16 project, whose participants sought smaller five-chromatic witnesses, more transparent explanations of their coloring obstructions, and constructions requiring still more colors~\cite{Polymath16,Mixon2018PolymathThird}.
One strand of the discussion studied the algebraic structure of the coordinates.
Identifying the plane with $\C$, participants considered rings generated by a few complex numbers of modulus one.
They asked which other elements of these rings had modulus one and how the relations among these directions could be used to construct colorings~\cite{Mixon2018PolymathThird}.
The Moser spindle~\cite{MoserMoser1961} provides a small example in which both the geometry and the algebra can be seen explicitly.

Following Speyer's description of the spindle in Polymath16~\cite{Speyer2018Spindle}, let $\alpha=(1+i\sqrt3)/2$.
The points $0$, $1$, $\alpha$, and $1+\alpha$ form a diamond consisting of two equilateral triangles sharing the edge from $1$ to $\alpha$.
Now let $\beta=(\sqrt{33}+i\sqrt3)/6$.
Both parameters have modulus one, and multiplying the diamond's vertices by $\beta^2$ gives a rotated copy about the origin.
The two diamonds share the origin, and their outer tips, $1+\alpha$ and $(1+\alpha)\beta^2$, are also one unit apart.
Adding the edge between these tips gives the Moser spindle shown in Figure~\ref{fig:moser-spindle}.

\input{tikz-code/moser.tex}

In any three-coloring of the first diamond, the adjacent vertices $1$ and $\alpha$ use two colors.
Both $0$ and $1+\alpha$ are adjacent to these vertices, so they must receive the third color.
The same argument in the rotated diamond forces $(1+\alpha)\beta^2$ to have the color of $0$.
The edge joining the two outer tips makes these requirements incompatible.
A fourth color suffices: three-color the two diamonds with the same color at the origin, then give one outer tip a fourth color.

The coordinates also explain the edge directions.
Since $\alpha-1=\alpha^2$, every edge vector in the first diamond is one of $\pm1$, $\pm\alpha$, or $\pm\alpha^2$.
The edge vectors in the second diamond are obtained by multiplying these by $\beta^2$.
For the connecting edge, we have
\[
(1+\alpha)\beta^2-(1+\alpha)
=(1+\alpha)(\beta^2-1)
=\alpha^2\beta.
\]
Every edge vector is therefore a signed monomial in $\alpha$ and $\beta$, while every vertex is an integer linear combination of these monomials.
This suggests an infinite graph containing the spindle: take the ring $\Z[\alpha,\beta]$ as the vertex set and join two vertices when their difference belongs to
\[
\Mon(\alpha,\beta)=\set{\pm\alpha^a\beta^b:a,b\in\Z_{\geq0}}.
\]
This is a Cayley graph of the additive group of the ring.
More generally, for an additive group $G$ and a set $S=-S$ of nonzero elements, $\Cay(G,S)$ has vertex set $G$ and an edge between $u$ and $v$ whenever $v-u\in S$.
We apply this construction to any collection of unit complex parameters.

\begin{definition}
    Let $\mathcal A$ be a set of complex numbers of modulus one.
    Write $\Mon(\mathcal A)$ for the set of signed finite products of elements of $\mathcal A$, including $\pm1$ from the empty product.
    The \emph{monomial Cayley graph generated by $\mathcal A$} is
    \[
    \MonCay(\mathcal A)=\Cay\bigl(\Z[\mathcal A],\Mon(\mathcal A)\bigr).
    \]
    For a finite parameter set $\mathcal A=\set{\alpha_1,\ldots,\alpha_r}$, we also write $\MonCay(\alpha_1,\ldots,\alpha_r)$.
\end{definition}

The parameters determine both the vertex ring and the allowed steps.
Every vertex is reached from the origin by a finite sequence of these steps, since every element of $\Z[\mathcal A]$ is an integer linear combination of monomials.
Every edge has length one.
There can, however, be pairs of vertices at distance one whose difference is not a signed monomial.
Writing $R=\Z[\mathcal A]$, we have an inclusion of graphs on the same vertex set:
\[
\MonCay(\mathcal A)
\subseteq
\Cay\bigl(R,\set{z\in R:|z|=1}\bigr).
\]
The graph on the right contains all unit-distance edges between points of the ring.
The graph on the left specifies its steps through their multiplicative generators.

This choice makes the allowed directions explicit without requiring a classification of every unit-modulus element of the ring.
Polynomial relations among the parameters describe the additive relations among the permitted steps, giving algebraic conditions that can be used to construct colorings.
The restriction to monomial steps still allows the program to recover the chromatic number of the plane.
To see this, take all algebraic unit directions as parameters:
\[
U_{\mathrm{alg}}=\set{\alpha\in\C:\alpha\text{ is algebraic over }\Q,\ |\alpha|=1}.
\]
Let $\Gamma_{\mathrm{alg}}=\MonCay(U_{\mathrm{alg}})$.
The set $U_{\mathrm{alg}}$ is closed under multiplication and negation and contains $1$, so
$\Mon(U_{\mathrm{alg}})=U_{\mathrm{alg}}$.
Every algebraic unit direction is therefore an allowed step in this countable graph.

Every finite unit-distance graph has an embedding with algebraic coordinates.
The squared-distance formula turns the unit-distance constraints into a finite system of quadratic polynomial equations with rational coefficients in the vertex coordinates.
Strict polynomial inequalities ensure that distinct vertices remain distinct.
Both $\R$ and the field of real algebraic numbers are real closed, so the Tarski--Seidenberg transfer principle~\cite[Theorem~2.80]{BasuPollackRoy2006} ensures that a real solution gives a solution with real algebraic coordinates.
For a connected graph, translate one vertex of this algebraic embedding to the origin.
A path from that vertex to any other expresses the latter as a sum of algebraic unit directions.
The translated graph is consequently a subgraph of $\Gamma_{\mathrm{alg}}$.
Since $\Gamma_{\mathrm{alg}}$ is itself a unit-distance graph, the de Bruijn--Erd\H{o}s compactness theorem~\cite{deBruijnErdos1951} gives
\[
\chi(\Gamma_{\mathrm{alg}})=\chi(\R^2).
\]

Each finite connected witness uses only finitely many algebraic edge directions, which may themselves be chosen as its parameters.
Thus monomial Cayley graphs with finite parameter sets suffice to recover the chromatic number of the plane.
Combining this observation with the lower bound of OpenAI~\cite{OpenAI2026Plane} gives a finite set $\mathcal A\subset U_{\mathrm{alg}}$ for which
\[
\chi(\MonCay(\mathcal A))\geq6.
\]
This consequence is existential: the argument does not identify such a parameter set or bound its size.
The appendix gives explicit embeddings of the five-chromatic graphs of de Grey and Parts using small collections of algebraic parameters~\cite{deGrey2018,Parts2020}.
The wider aim is to understand how relations among a few direction parameters govern colorability, to construct explicit witnesses requiring at least six colors, and to investigate whether seven colors can be forced.

The spindle supplies a four-chromatic finite subgraph using two direction parameters.
We begin the systematic study of this program with one parameter: how many colors can be forced by the powers of a single unit complex number?
Let $\alpha=e^{i\theta}$ now be an arbitrary unit complex number, and write $\Gamma_\alpha=\MonCay(\alpha)$.
Its allowed steps are $\pm\alpha^n$ for nonnegative integers $n$.
Walks from the origin along these steps reach exactly the vertices of $\Z[\alpha]$, each of which has the form
\[
p(\alpha)=a_m\alpha^m+\cdots+a_1\alpha+a_0
\]
with integer coefficients $a_j$.
If $\theta/\pi$ is irrational, the allowed directions are dense in the unit circle.
Yet two colors suffice when $\alpha$ is transcendental: every vertex has a unique polynomial expression, and each allowed step changes the parity of the sum of its coefficients.
For algebraic $\alpha$, polynomial relations identify different expressions for the same vertex.
Understanding how these relations constrain a coloring is the one-parameter problem studied in this paper.
Powers of algebraic integers on the unit circle also occur in Radchenko's construction of finitely generated additive groups whose full unit-distance graphs have infinite vertex degrees~\cite{Radchenko2021}.

Our main results describe this algebraic case.
When $\theta$ is a rational multiple of $\pi$, the parameter $\alpha$ is a root of unity, and we determine $\chi(\Gamma_\alpha)$ exactly.
It is two when the order of $\alpha$ is a power of two, and three otherwise.
We also determine the circular chromatic number, a refinement of the ordinary chromatic number based on separation of colors around a circle~\cite{Vince1988,Zhu2001}.
As a geometric consequence, every unit-distance graph whose edge directions form pairwise angles that are rational multiples of $\pi$ is three-colorable.
For algebraic parameters that are not algebraic integers, the theorem of Garc\'ia-Marco, Knauer, and Menara~\cite[Theorem~4.3]{GarciaMarcoKnauerMenara2026} gives a four-color bound; we give further arithmetic criteria guaranteeing two or three colors.
We conjecture that three colors suffice for every parameter in this latter class.

For algebraic integers, four colors can be necessary even with a single direction parameter.
We construct such an example, answering the question in Cervantes and Krebs~\cite[Example~2.14]{CervantesKrebs2023}.
This leads to our central conjecture: for every unit complex parameter $\alpha$,
\[
\chi(\Gamma_\alpha)\leq4.
\]

Our coloring methods bring together the matrix viewpoint of Cervantes and Krebs~\cite[Examples~2.13--2.14]{CervantesKrebs2023} and the multiplier method described in Liu's survey of distance graphs~\cite[Section~2]{Liu2008}.
In a cyclic quotient $\Z/p\Z$, the multiplier method sends a residue $x$ to $\lambda x$ modulo $p$, choosing $\lambda$ so that the allowed step differences move away from zero around the cycle.
For example, multiplication by $3$ modulo $7$ sends the steps $\pm1$ to $\pm3$.
Placing the residues at equally spaced points of a circle of circumference one then separates adjacent vertices of the seven-cycle by circular distance $3/7$.
Dividing the circle into three half-open arcs of equal length gives a proper three-coloring.

In general, the normalized multiplier takes the form $x\mapsto\lambda x/p+\Z$.
This is an additive map into $\R/\Z$, also called an \emph{additive character}.
We use this formulation of the multiplier method on $\Z[\alpha]$: a character whose values on all allowed steps stay sufficiently far from zero gives a coloring by partitioning the circle into short arcs, as well as a bound on the circular chromatic number.
For algebraic $\alpha$, the minimal polynomial imposes a linear recurrence on the character values at successive powers of $\alpha$.
Polynomial evaluations and congruences provide ways to construct solutions with the required separation.
For algebraic noninteger parameters, we present the character-extension construction underlying the semiminimal Cayley graph bound cited above.

These upper bounds also guide the search for finite subgraphs requiring more colors, following the screening strategy proposed by Gibbs in Polymath16~\cite{Gibbs2018Screening}.
A successful coloring test gives an upper bound for the entire infinite graph; parameter sets left unresolved can then be examined through finite subgraphs.
A satisfiability solver tests whether a prescribed number of colors suffices, and an obstruction can be reduced by deleting vertices while preserving its noncolorability, as in Heule's work on five-chromatic unit-distance graphs~\cite{Heule2018}.
The resulting witness can then be checked independently.

The four-chromatic example announced above brings these two parts of the method together.
Let $\alpha$ be a unit-modulus root of the sextic polynomial
\[f(x)=x^6-x^4-x^3-x^2+1.\]
An explicit periodic character gives a four-coloring of the entire graph $\Gamma_\alpha$.
Starting from a graph on the $830$ distinct sums of selected powers $1,\alpha,\ldots,\alpha^9$, our computational search produces a subgraph with $24$ vertices and $51$ edges that admits no proper three-coloring.
Its edges are verified by exact polynomial reduction, and its non-three-colorability is checked by exhaustive backtracking.
These upper and lower bounds establish $\chi(\Gamma_\alpha)=4$.
Figure~\ref{fig:sextic-coloring} displays the finite witness inside this larger sumset graph.

\begin{figure}[H]
    \centering
    \includegraphics[width=\linewidth]{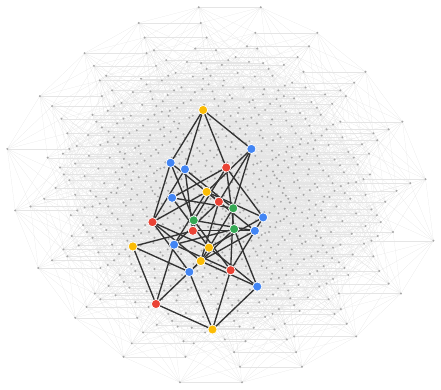}
    \caption{The four-chromatic sextic witness with $24$ vertices and $51$ edges, highlighted inside the $830$-vertex sumset graph used in the search. Colored vertices and dark edges mark the witness; the search graph is shown in gray.}
    \label{fig:sextic-coloring}
\end{figure}

The paper is organized as follows.
\begin{itemize}
    \item Section~2 develops the algebraic framework and character constructions, including the Cyclotomic Congruence Criterion and Cyclotomic Norm Criterion and the explicit eighth-root coloring result.
    \item Section~3 determines the ordinary and circular chromatic numbers for transcendental and cyclotomic parameters and proves the three-color theorem for rational-angle directions.
    \item Section~4 treats noncyclotomic algebraic parameters. It establishes the bounds for algebraic nonintegers, constructs the four-chromatic sextic example, and proves the four-color bound through degree eight.
    \item Section~5 concludes with the remaining one-parameter conjectures and outlines the research program for several parameters and higher dimensions.
    \item Appendix~\ref{app:salem-obstruction} proves the limitation of fixed cyclotomic tests. Appendix~\ref{app:de-grey} presents monomial Cayley embeddings of the five-chromatic constructions of de Grey and Parts.
\end{itemize}

%% file: tikz-code/hex-coloring.tex
\begin{tikzpicture}[line join=round]
    \definecolor{hexcolor0}{HTML}{4285F4}
    \definecolor{hexcolor1}{HTML}{EA4335}
    \definecolor{hexcolor2}{HTML}{FBBC05}
    \definecolor{hexcolor3}{HTML}{34A853}
    \definecolor{hexcolor4}{HTML}{673AB7}
    \definecolor{hexcolor5}{HTML}{FF9800}
    \definecolor{hexcolor6}{HTML}{009688}

    \def\hexradius{0.5}
    \def\hexhalfwidth{5.5}
    \def\hexhalfheight{2}

    \begin{scope}
        \clip (-\hexhalfwidth,-\hexhalfheight)
            rectangle (\hexhalfwidth,\hexhalfheight);
        \foreach \r in {-5,...,5} {
            \foreach \q in {-10,...,10} {
                \pgfmathtruncatemacro{\hexindex}{mod(\q+3*\r+140,7)}
                \pgfmathsetmacro{\hexx}{sqrt(3)*\hexradius*(\q+\r/2)}
                \pgfmathsetmacro{\hexy}{1.5*\hexradius*\r}
                \begin{scope}[shift={(\hexx,\hexy)}]
                    \path[fill=hexcolor\hexindex,
                          draw=black!60, line width=0.3pt]
                        (30:\hexradius)
                        -- (90:\hexradius)
                        -- (150:\hexradius)
                        -- (210:\hexradius)
                        -- (270:\hexradius)
                        -- (330:\hexradius)
                        -- cycle;
                \end{scope}
            }
        }
    \end{scope}

    \pgfresetboundingbox
    \path[use as bounding box]
        (-\hexhalfwidth,-\hexhalfheight)
        rectangle (\hexhalfwidth,\hexhalfheight);
\end{tikzpicture}

%% file: tikz-code/moser.tex
\begin{figure}[ht]
    \centering
    \begin{tikzpicture}[scale=2.2,line cap=round,line join=round]
        \definecolor{spindleblue}{HTML}{4285F4}
        \definecolor{spindlered}{HTML}{EA4335}
        \definecolor{spindlegreen}{HTML}{34A853}
        \tikzset{
            spindle vertex/.style={circle,draw=black,inner sep=0pt,minimum size=4.5pt},
            spindle forced/.style={spindle vertex,fill=spindleblue,minimum size=5.5pt}
        }
        \coordinate (u) at (0,0);
        \coordinate (x) at (1,0);
        \coordinate (y) at (0.5,{sqrt(3)/2});
        \coordinate (v) at (1.5,{sqrt(3)/2});
        \coordinate (xp) at ({5/6},{sqrt(11)/6});
        \coordinate (yp) at ({(5-sqrt(33))/12},{(sqrt(11)+5*sqrt(3))/12});
        \coordinate (vp) at ({(15-sqrt(33))/12},{(3*sqrt(11)+5*sqrt(3))/12});

        \draw[thick] (u)--(x)--(v)--(y)--cycle;
        \draw[thick] (x)--(y);
        \draw[thick] (u)--(xp)--(vp)--(yp)--cycle;
        \draw[thick] (xp)--(yp);

        \draw[spindlered,very thick,dashed,-{Stealth[length=2mm]},
              shorten <=4pt,shorten >=4pt]
            (v)--node[midway,above,sloped,fill=white,inner sep=1.5pt,
                      text=black,font=\small] {$\alpha^2\beta$} (vp);

        \node[spindle forced,label=below left:{$0$}] at (u) {};
        \node[spindle vertex,fill=spindlered,label=below right:{$1$}] at (x) {};
        \node[spindle vertex,fill=spindlegreen,label=above left:{$\alpha$}] at (y) {};
        \node[spindle forced,label=right:{$1+\alpha$}] at (v) {};
        \node[spindle vertex,fill=spindlered,label=below right:{$\beta^2$}] at (xp) {};
        \node[spindle vertex,fill=spindlegreen,label=left:{$\alpha\beta^2$}] at (yp) {};
        \node[spindle forced,label=above:{$(1+\alpha)\beta^2$}] at (vp) {};
        \foreach \vertex in {u,v,vp} {
            \node[circle,draw=black,inner sep=0pt,minimum size=9pt] at (\vertex) {};
        }
    \end{tikzpicture}
    \caption{The Moser spindle. The colors show a three-coloring of the two diamonds before the dashed edge is added. The three ringed vertices must have the same color in every three-coloring, so the additional edge prevents a three-coloring of the spindle. Its arrow records the difference $\alpha^2\beta$ used in the algebraic description.}
    \label{fig:moser-spindle}
\end{figure}

%% file: 2-background.tex

\section{Colorings from Polynomial Relations}

A polynomial relation among the powers of $\alpha$ describes a closed walk in $\Gamma_\alpha$.
To construct a coloring from images of the allowed steps, those images must respect every such relation.
We begin with a worked example in which one relation yields both a coloring and a matching obstruction.
The Shift Criterion and the multiplier method then provide a common framework for the constructions that follow.
We use Eisenstein quotients to obtain the Cyclotomic Congruence Criterion, and then turn to trace polynomials and norms when the integer congruence tests give no bound.

\subsection{Polynomial Evaluation and Circular Colorings}
\label{subsec:three-coloring}

Consider the parameter $\alpha=(5+i\sqrt{39})/8$ from Cervantes and Krebs~\cite[Example~2.13]{CervantesKrebs2023}.
Its primitive integer minimal polynomial is
\[f(x)=4x^2-5x+4.\]
Every vertex of $\Gamma_\alpha$ can be written as $p(\alpha)$ for some $p\in\Z[x]$, but different polynomials may represent the same vertex.
A coloring defined from a polynomial expression must therefore give the same answer for every expression representing that vertex.
The coefficient sum of the relation is $f(1)=3$, suggesting the color assignment
\[c\bigl(p(\alpha)\bigr)=p(1)\pmod3.\]
To show that this assignment is well-defined, suppose $p(\alpha)=q(\alpha)$.
The minimal polynomial property and Gauss's lemma show that $p-q$ is an integer polynomial multiple of $f$.
Evaluating at $1$ therefore gives $p(1)\equiv q(1)\pmod3$.
Thus the color is independent of the polynomial representation of the vertex.
Evaluation at $1$ respects addition, so $c:\Z[\alpha]\to\Z/3\Z$ is additive.
An allowed step $\pm\alpha^n$ changes the color by $\pm1$ modulo three, so adjacent vertices receive different colors.
Equivalently, $c$ gives a graph homomorphism
\[\Gamma_\alpha\to C_3=\Cay(\Z/3\Z,\{\pm1\}),\]
and hence $\chi(\Gamma_\alpha)\leq3$.
Figure~\ref{fig:ck-three-color-pullback} shows two paths whose polynomial expressions differ by $f$, so they reach the same vertex and assign it the same color.

\begin{figure}[ht]
	\centering
	\input{tikz-code/ck-three-color.tex}
	\caption{Two paths representing the same vertex and color, with $p-q=f$ and $p(1)\equiv q(1)\equiv1\pmod3$.}
	\label{fig:ck-three-color-pullback}
\end{figure}

Evaluation at $-1$ gives a second coloring construction for the same parameter.
Since $f(-1)=13$, evaluation at $-1$ defines an additive map $\eta:\Z[\alpha]\to\Z/13\Z$ by $\eta(p(\alpha))=p(-1)\pmod{13}$.
The same divisibility argument shows that this map is well-defined.
Every allowed step $\pm\alpha^n$ maps to $\pm(-1)^n$, so $\eta$ gives a graph homomorphism
\[\Gamma_\alpha\to C_{13}=\Cay(\Z/13\Z,\{\pm1\}).\]
The usefulness of this target comes from the circular arrangement of its colors.

For $r\geq2$, a \emph{circular $r$-coloring} assigns vertices to points on a circle of circumference $r$ so that adjacent vertices have circular distance at least one.
The \emph{circular chromatic number} $\chi_c$ is the infimum of the values of $r$ for which such a coloring exists.
Equivalently, we may use the normalized circle $\T=\R/\Z$ and require circular distance at least $1/r$ across every edge.
An additive homomorphism into $\T$ is called a \emph{character}.
We regard the vertex groups as discrete, so these characters need not be continuous in the Euclidean topology inherited from the plane.
The steps in the cyclic target have images $\pm1$.
Multiplication by $6$ modulo $13$ moves them to $\pm6$, the residues at greatest circular distance from zero.
This is the multiplier method from the introduction, applied to the quotient supplied by $f(-1)$.
After normalization, it gives a character $\psi:\Z[\alpha]\to\T$ defined by
\[\psi\bigl(p(\alpha)\bigr)=\frac{6}{13}p(-1)\pmod1.\]
For every allowed step,
\[\psi(\pm\alpha^n)=\pm\frac{6}{13}(-1)^n\pmod1,\]
whose circular distance from zero is $6/13$.
Since $\psi$ is additive, adjacent vertices have this same circular separation.
Rescaling to a circle of circumference $13/6$ therefore gives
\[\chi_c(\Gamma_\alpha)\leq\frac{13}{6}.\]
The three half-open intervals $[0,6/13)$, $[6/13,12/13)$ and $[12/13,1)$ also give a proper three-coloring, since two points in the same interval have circular distance less than $6/13$.

\begin{figure}[ht]
	\centering
	\input{tikz-code/ck-circular-color.tex}
	\caption{The closed walk $4-5\alpha+4\alpha^2=0$ and its character image.
	Each step advances by $6/13$ modulo $1$; the color intervals give a three-coloring.}
	\label{fig:ck-circular-coloring}
\end{figure}

The polynomial relation supplies a matching lower bound.
The identity $4-5\alpha+4\alpha^2=0$ gives a closed walk consisting of four steps in direction $1$, five in direction $-\alpha$, and four in direction $\alpha^2$.
Each of these steps has $\eta$-image $1$, so the first thirteen vertices of the walk have distinct images $0,1,\ldots,12$ modulo thirteen.
They therefore form a copy of $C_{13}$ in $\Gamma_\alpha$.
Each step has $\psi$-image $6/13$, and Figure~\ref{fig:ck-circular-coloring} shows the cycle, its character image, and the three color intervals.

An independent set in $C_{13}$ has at most six vertices.
In a normalized circular $r$-coloring of this cycle, the preimage of every half-open arc of length $1/r$ is independent and hence contains at most six vertices.
Averaging over the position of the arc gives $13/r\leq6$, so $\chi_c(C_{13})\geq13/6$.
Restricting a coloring of $\Gamma_\alpha$ to this cycle gives the same lower bound for $\Gamma_\alpha$.
Together with the constructed character, this proves the exact circular bound
\[\chi_c(\Gamma_\alpha)=\frac{13}{6}.\]
The three-coloring and the odd cycle also give $\chi(\Gamma_\alpha)=3$.

The two constructions prescribe images of the powers of $\alpha$ and use the minimal polynomial to check that these prescriptions define additive maps.
For the rest of this section, let $\alpha$ be an arbitrary algebraic direction parameter with primitive integer minimal polynomial $f(x)=\sum_{j=0}^{d}a_jx^j$, normalized so that $a_d>0$.
As in the example, the minimal polynomial property and Gauss's lemma identify $\Z[\alpha]$ with $\Z[x]/(f)$.
The shifted coefficient relations in Cervantes and Krebs~\cite[Example~2.14]{CervantesKrebs2023} give the following criterion for prescribing the images of the powers of $\alpha$.

\begin{proposition}[Shift Criterion]
	\label{prop:shift-criterion}
	Let $G$ be an abelian group, and let $g_n\in G$ for every $n\geq0$.
	The assignment $\alpha^n\mapsto g_n$ extends to an additive homomorphism $h:\Z[\alpha]\to G$ if and only if
	\[\sum_{j=0}^{d}a_jg_{n+j}=0\]
	for every $n\geq0$.
	When it exists, this extension is unique.
\end{proposition}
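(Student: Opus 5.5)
The plan is to use the identification $\Z[\alpha]\cong\Z[x]/(f)$ recorded just before the statement, which follows from the minimal polynomial property and Gauss's lemma. Under this identification, additive homomorphisms $h:\Z[\alpha]\to G$ correspond exactly to additive homomorphisms $H:\Z[x]\to G$ that vanish on the ideal $f\,\Z[x]$. The proof then splits into three short steps: define $H$ on the free abelian group $\Z[x]$, show that $H$ kills the ideal precisely when the shifted relations hold, and read off uniqueness.

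\emph{Step 1 (construction on $\Z[x]$).} Since $\Z[x]$ is free abelian on the monomials $x^n$ for $n\geq0$, the prescription $x^n\mapsto g_n$ extends uniquely to an additive map $H:\Z[x]\to G$. Explicitly,
\[H\Bigl(\sum_n c_nx^n\Bigr)=\sum_n c_ng_n.\]

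\emph{Step 2 (descent to $\Z[\alpha]$).} The map $H$ descends to $\Z[x]/(f)$ if and only if $H(qf)=0$ for every $q\in\Z[x]$. Because $H$ is additive and every $q$ is an integer combination of monomials, it suffices to check $H(x^nf)=0$ for all $n\geq0$. Expanding gives
\[H(x^nf)=\sum_{j=0}^{d}a_jg_{n+j},\]
which is exactly the sum in the statement. If all these sums vanish, $H$ descends to an additive map $h$ on $\Z[\alpha]$ with $h(\alpha^n)=g_n$.

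Conversely, suppose an additive extension $h$ exists. Composing $h$ with the quotient map $\Z[x]\to\Z[\alpha]$ gives an additive map that agrees with $H$ on monomials, so it equals $H$. It therefore vanishes on $x^nf$, since $\alpha^nf(\alpha)=0$. This yields the stated relations.

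\emph{Step 3 (uniqueness).} The powers $\alpha^n$ generate $\Z[\alpha]$ as an abelian group, so any additive map is determined by its values on them. Hence the extension is unique whenever it exists.

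The only nontrivial input is the kernel identification $\ker(\Z[x]\to\Z[\alpha])=f\,\Z[x]$. This uses that $f$ is primitive: if $p(\alpha)=0$, then $f$ divides $p$ in $\Q[x]$, and Gauss's lemma upgrades this to divisibility in $\Z[x]$. The paper already invokes this identification, so I expect no real obstacle. The one point requiring care is that the relations must hold for all shifts $n\geq0$, not just $n=0$, because the ideal is generated by $f$ as an ideal, not as an abelian group.
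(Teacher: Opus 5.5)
Your proposal is correct and follows essentially the same route as the paper: define the additive map on the free abelian group $\Z[x]$, and observe that it descends to $\Z[x]/(f)\cong\Z[\alpha]$ exactly when it vanishes on the additive generators $x^nf$ of the ideal. Uniqueness then follows because the powers of $\alpha$ generate $\Z[\alpha]$ additively. The only difference is that you spell out the converse direction, which the paper compresses into the phrase ``precisely when''.
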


\begin{proof}
	The prescribed images define an additive map $L:\Z[x]\to G$ where
	\[L\left(\sum_n b_nx^n\right)=\sum_n b_ng_n.\]
	This map descends to $\Z[x]/(f)\cong\Z[\alpha]$ precisely when it vanishes on $(f)$.
	Since the polynomials $x^nf$ for $n\geq0$ generate $(f)$ additively, this is equivalent to
	\[L(x^nf)=\sum_{j=0}^{d}a_jg_{n+j}=0\]
	for every $n\geq0$.
	The powers of $\alpha$ generate $\Z[\alpha]$ additively, so their prescribed images determine the extension uniquely.
\end{proof}

To use $h$ for coloring, let $S\subseteq G\setminus\{0\}$ be symmetric and suppose that $h(\alpha^n)\in S$ for every $n\geq0$.
An edge difference $\pm\alpha^n$ then maps to $\pm h(\alpha^n)\in S$, so $h$ gives a graph homomorphism $h:\Gamma_\alpha\to\Cay(G,S)$.
If $c$ is a proper coloring of the target graph, then $c\circ h$ properly colors $\Gamma_\alpha$.
The same composition preserves the separation required for a circular coloring.
Thus both the ordinary and circular chromatic numbers of $\Gamma_\alpha$ are bounded above by those of the target.

A useful special case is evaluation in a commutative ring $R$ with identity.
If $t\in R$ satisfies $f(t)=0$, then $\alpha^n\mapsto t^n$ satisfies the shifted relations and defines a ring homomorphism $\Z[\alpha]\to R$.
When the step images are nonzero, we can color the resulting Cayley graph and pull the coloring back.
Subsequent maps need only be additive, and the final coloring need only be a graph coloring; preserving multiplication is not required.
We will use this evaluation principle repeatedly, specifying the relation that vanishes and the images of the steps in each target.

The opening example used a multiplier to turn a cyclic quotient into a circular coloring.
We now formulate the separation condition behind that construction and apply it to the simplest evaluations.

\subsection{Multipliers, Characters, and Circulant Targets}
\label{subsec:characters-circulants}

The multiplier method on a cyclic quotient chooses an integer $\lambda$ and sends a residue $x$ modulo $p$ to $\lambda x/p+\Z$; see Liu's survey~\cite[Section~2]{Liu2008}.
Choosing the multiplier controls the circular separation of the step images.
Characters express the same idea for a general additive group.
For $\Z[\alpha]$, we can prescribe the circle images of the powers of $\alpha$ directly, provided that they respect the polynomial relations and remain uniformly separated from zero.
Write $\|t\|_{\T}$ for the circular distance from $t$ to zero.
We seek values $u_n\in\T$ satisfying the shifted relations $\sum_{j=0}^{d}a_ju_{n+j}=0$ in $\T$ for every $n\geq0$.
The Shift Criterion then gives a character $\psi:\Z[\alpha]\to\T$ with $\psi(\alpha^n)=u_n$.
For coloring, we require in addition a uniform separation $\|u_n\|_{\T}\geq\delta$, where $0<\delta\leq1/2$.
These assignments need not come from evaluation at a fixed root of unity, and they may be periodic or nonperiodic.

\begin{lemma}[Character separation]
	\label{lem:character-separation}
	Let $G$ be an abelian group and let $S\subseteq G\setminus\{0\}$ be symmetric.
	If a character $\psi:G\to\T$ satisfies $\|\psi(s)\|_{\T}\geq\delta$ for every $s\in S$, where $0<\delta\leq1/2$, then
	\[\chi_c\bigl(\Cay(G,S)\bigr)\leq\frac1\delta.\]
	In particular, $\Cay(G,S)$ admits an ordinary coloring with at most $\lceil1/\delta\rceil$ colors.
\end{lemma}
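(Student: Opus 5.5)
The plan is to build the circular coloring directly from $\psi$, by composing it with a rescaling of the circle. Set $r=1/\delta$, so that $r\geq2$ because $\delta\leq1/2$. We work with the normalized form of a circular $r$-coloring: a map $c:G\to\T$ such that adjacent vertices are at circular distance at least $1/r=\delta$. We take $c=\psi$ itself. Suppose $u$ and $v$ are adjacent in $\Cay(G,S)$. Then $v-u=s$ for some $s\in S$. Because $\psi$ is additive,
\[
\psi(v)-\psi(u)=\psi(s).
\]
The circular distance between $\psi(u)$ and $\psi(v)$ is therefore $\|\psi(s)\|_{\T}\geq\delta$. So $\psi$ is a normalized circular $(1/\delta)$-coloring. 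Since $\chi_c$ is an infimum over admissible values of $r$, this gives $\chi_c(\Cay(G,S))\leq1/\delta$. The symmetry of $S$ plays no real role here. It only ensures that the Cayley graph is undirected, and $\|\cdot\|_{\T}$ is invariant under negation in any case.

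For the ordinary coloring, let $k=\lceil1/\delta\rceil$. Partition $\T$ into the $k$ half-open arcs
\[
I_j=\bigl[j/k,(j+1)/k\bigr),\qquad j=0,\ldots,k-1,
\]
and give vertex $v$ color $j$ when $\psi(v)\in I_j$. This is the same device as the three intervals used for $\Gamma_\alpha$ in the worked example.

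The main point to check is that two points in the same arc are closer than $\delta$. Two points of one half-open arc of length $1/k$ have circular distance strictly less than $1/k$. Since $k\geq1/\delta$, we have $1/k\leq\delta$. Any two vertices with the same color are therefore at circular distance less than $\delta$, so they cannot be adjacent. This gives a proper coloring with at most $k$ colors.

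The only subtlety is the boundary case where $1/\delta$ is an integer, so that $1/k=\delta$. There the half-openness of the arcs is exactly what keeps the distance strictly below $\delta$. Beyond that, the argument is routine unwinding of definitions.
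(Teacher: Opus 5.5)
Your proof is correct and follows essentially the paper's argument: $\psi$ itself serves as the normalized circular $1/\delta$-coloring, and the ordinary coloring comes from cutting the circle into half-open arcs of length at most $\delta$. The only cosmetic difference is that you use $\lceil1/\delta\rceil$ equal arcs of length $1/\lceil1/\delta\rceil$, whereas the paper rescales to circumference $1/\delta$ and takes integer parts of representatives in $[0,1/\delta)$, which gives arcs of length exactly $\delta$ and a possibly shorter final arc.
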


\begin{proof}
	Additivity makes the circular distance between adjacent images at least $\delta$.
	Rescaling to a circle of circumference $1/\delta$ therefore gives a circular $1/\delta$-coloring.
	Taking integer parts of representatives in $[0,1/\delta)$ gives at most $\lceil1/\delta\rceil$ ordinary colors, since two representatives with the same integer part differ by less than one.
\end{proof}

Thus separation $1/3$ gives three colors, while separation $1/4$ gives four.
Once the values $u_n$ have been chosen, the circle color of a vertex $v=\sum_n b_n\alpha^n$ is explicitly $\psi(v)=\sum_n b_nu_n$ in $\T$.

These constructions also fit the description by homomorphisms to circulants.
For coprime positive integers $p\geq2q$, the \emph{circular clique}
\[K_{p/q}=\Cay\bigl(\Z/p\Z,\{q,q+1,\ldots,p-q\}\bigr)\]
is the standard target for a circular $p/q$-coloring; see Zhu~\cite{Zhu2001}.
Placing $j$ at $j/p\in\T$ gives separation $q/p$, and $K_{p/1}=K_p$ recovers ordinary colorings.
Conversely, a character with separation at least $q/p$ gives a graph homomorphism to $K_{p/q}$ by setting $c(v)=\lfloor p\widetilde\psi(v)\rfloor\pmod p$, where $\widetilde\psi(v)\in[0,1)$ represents $\psi(v)$.
For adjacent vertices with representatives $x<y$, we have $q\leq p(y-x)\leq p-q$, and hence $q\leq\lfloor py\rfloor-\lfloor px\rfloor\leq p-q$.
This rounding works even when the character has infinite image, although it need not preserve additivity.
When all character values lie in $(1/p)\Z/\Z$, no rounding is needed and the resulting map is additive.
For example, the character in Subsection~\ref{subsec:three-coloring} gives $\Gamma_\alpha\to K_{13/6}$; multiplication by six identifies its earlier target $C_{13}$ with this circular clique.
Figure~\ref{fig:character-rounding} illustrates the rounding map for $K_{7/2}$.

\begin{figure}[htbp]
	\centering
	\input{tikz-code/character-rounding.tex}
	\caption{A four-coloring pulled back along the rounding homomorphism $K_{10/3}\to K_{7/2}$.
	Colored arcs indicate the half-open color intervals.}
	\label{fig:character-rounding}
\end{figure}

The familiar evaluations at $1$, $-1$, and $i$ provide immediate applications.
First, evaluation at $1$ gives the following specialization of the parity criterion of Cervantes and Krebs~\cite[Lemma~2.11]{CervantesKrebs2023}.

\begin{lemma}
	\label{lem:bipartite-criterion}
	The graph $\Gamma_\alpha$ is bipartite if and only if $f(1)$ is even.
\end{lemma}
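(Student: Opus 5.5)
The proof has two directions, and both rest on the map $\Z[\alpha]\to\Z/2\Z$ given by evaluation at $1$.

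Suppose first that $f(1)$ is even. We use the evaluation principle following the Shift Criterion (Proposition~\ref{prop:shift-criterion}) with $R=\Z/2\Z$ and $t=1$. Then $f(t)=f(1)\equiv0\pmod 2$, so $\alpha^n\mapsto1$ extends to a ring homomorphism $h:\Z[\alpha]\to\Z/2\Z$. Concretely, $h(p(\alpha))=p(1)\bmod 2$, and it is well defined because $p-q\in(f)$ forces $p(1)-q(1)\in f(1)\Z\subseteq2\Z$. Every allowed step $\pm\alpha^n$ maps to $\pm1=1\neq0$. Hence $h$ is a graph homomorphism $\Gamma_\alpha\to K_2=\Cay(\Z/2\Z,\{1\})$, and $\Gamma_\alpha$ is bipartite.

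Suppose now that $f(1)$ is odd. We exhibit an odd closed walk from the relation $f(\alpha)=0$, just as the relation $4-5\alpha+4\alpha^2=0$ gave a $13$-cycle in Subsection~\ref{subsec:three-coloring}. Write $f(\alpha)=\sum_j a_j\alpha^j=0$. For each $j$, take $|a_j|$ steps in the direction $\operatorname{sgn}(a_j)\alpha^j$. These are allowed steps, and their total displacement is $f(\alpha)=0$, so they form a closed walk. Its length is $\sum_j|a_j|\equiv\sum_j a_j=f(1)\pmod 2$, which is odd. A graph containing a closed walk of odd length contains an odd cycle, so it is not bipartite.

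Only minor bookkeeping remains. The walk is nonempty because $f(1)$ is odd and hence nonzero. A closed walk may repeat vertices, but the standard fact that any odd closed walk contains an odd cycle handles this; equivalently, any $2$-coloring would have to change color at every step and so would fail to return to its starting color. No step is genuinely difficult; the main point is the parity identity $\sum|a_j|\equiv f(1)\pmod 2$.
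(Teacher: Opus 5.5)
Your proof is correct and follows the paper's argument: evaluation $p(\alpha)\mapsto p(1)\bmod 2$ gives the two-coloring when $f(1)$ is even, and the relation $f(\alpha)=0$ read as a closed walk of length $\sum_j|a_j|\equiv f(1)\pmod 2$ rules out bipartiteness when $f(1)$ is odd. Your extra bookkeeping (well-definedness via $(f)$, the nonempty walk, and the passage from an odd closed walk to an odd cycle) just spells out steps the paper leaves implicit.
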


\begin{proof}
	If $f(1)$ is even, the map $p(\alpha)\mapsto p(1)\pmod2$ is a proper two-coloring.
	If $f(1)$ is odd, the relation $f(\alpha)=0$ gives a closed walk of odd length $\sum_j|a_j|\equiv f(1)\pmod2$, which excludes bipartiteness.
\end{proof}

Evaluation at $\pm1$ gives the three-color test of Cervantes and Krebs~\cite[Example~2.14]{CervantesKrebs2023}.
The next lemma sharpens these familiar congruence tests by mapping their quotients to cycles and using the separation of the resulting characters.

\begin{lemma}
	\label{lem:elementary-evaluation}
	Let $q\geq2$ be an integer.
	If $q$ divides $f(1)$, $f(-1)$, or $f(i)$, then $\Gamma_\alpha$ admits a graph homomorphism to $\Cay(\Z/q\Z,\{\pm1\})$, and
	\[\chi_c(\Gamma_\alpha)\leq\frac{q}{\lfloor q/2\rfloor}\leq3.\]
	In particular, $\Gamma_\alpha$ is bipartite when $q$ is even and three-colorable when $q$ is odd.
\end{lemma}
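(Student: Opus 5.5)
Fix $t\in\{1,-1,i\}$ with $q\mid f(t)$. The plan is to build an additive map $\Z[\alpha]\to\Z/q\Z$ using the evaluation principle after the Shift Criterion. Then we choose a multiplier, as in the opening example, and apply Lemma~\ref{lem:character-separation}.

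For $t=\pm1$ the map is direct. Set $g_n=t^n\pmod q$. Then $\sum_j a_jg_{n+j}=t^nf(t)\equiv0\pmod q$, so Proposition~\ref{prop:shift-criterion} gives an additive $h$ with $h(\alpha^n)=(\pm1)^n\in\{\pm1\}$.

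For $t=i$ the images live in $\Z[i]/q\Z[i]$, and we need to land in $\Z/q\Z$. The route I would try first is to work in the ring $\Z[i]/(q)$, which is free of rank two over $\Z/q\Z$. Here $f(i)=A+Bi$ with $q\mid A$ and $q\mid B$, so $f(i)=0$ in this ring and $p(\alpha)\mapsto p(i)$ is a well-defined ring homomorphism. Compose it with the additive projection $x+yi\mapsto x+y\pmod q$. The powers $i^n$ then map to $1,1,-1,-1,\ldots$, all in $\{\pm1\}$. The main care needed is exactly this step: "$q\mid f(i)$" has to be read as divisibility in $\Z[i]$, meaning $q$ divides both the real and imaginary parts, and the projection must send every power of $i$ to $\pm1$ rather than to $0$. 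The map $x+yi\mapsto x+y$ achieves this.

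In every case we obtain a graph homomorphism $\Gamma_\alpha\to\Cay(\Z/q\Z,\{\pm1\})$. Next set $k=\lfloor q/2\rfloor\geq1$ and take the character $\psi(v)=k\,h(v)/q\in\T$. Each step has image $\pm k/q$. Its circular distance from zero is $\min(k,q-k)/q=k/q$, since $k\leq q/2$. Lemma~\ref{lem:character-separation} then yields $\chi_c(\Gamma_\alpha)\leq q/k$.

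It remains to bound $q/k$. For even $q$ this ratio equals $2$. For odd $q=2k+1$ it equals $2+1/k\leq3$, with equality only when $q=3$. By the lemma's final clause, $\lceil q/k\rceil$ colors suffice: two colors when $q$ is even, so $\Gamma_\alpha$ is bipartite, and three colors when $q$ is odd.
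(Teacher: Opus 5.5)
Your proposal is correct and follows essentially the same route as the paper. You evaluate at $1$, $-1$, or $i$, composing the last with $a+bi\mapsto a+b\pmod q$ on $\Z[i]/q\Z[i]$, then apply the multiplier $\lfloor q/2\rfloor$ and Lemma~\ref{lem:character-separation}; your explicit checks that the powers of $i$ map to $1,1,-1,-1$ and that $q/\lfloor q/2\rfloor$ is $2$ or $2+1/k$ simply spell out details the paper leaves implicit.
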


\begin{proof}
	If $q\mid f(1)$, evaluation gives $p(\alpha)\mapsto p(1)\pmod q$, sending every allowed step to $\pm1$.
	If $q\mid f(-1)$, use $p(\alpha)\mapsto p(-1)\pmod q$, with the same step images.
	For evaluation at $i$, the condition $f(i)\in q\Z[i]$ makes the composite
	\[\Z[\alpha]\longrightarrow\Z[i]/q\Z[i]\longrightarrow\Z/q\Z\]
	well-defined and additive.
	The first map evaluates at $i$, and the second sends $a+bi$ to $a+b\pmod q$.
	Every allowed step first maps to one of $\pm1,\pm i$ and then to $\pm1$.
	In each case, writing $h=\lfloor q/2\rfloor$, composition with $j\mapsto hj/q\pmod1$ gives a character whose step images have distance $h/q$ from zero.
	Lemma~\ref{lem:character-separation} proves the bounds.
\end{proof}

The cyclic target is $C_q$ for $q\geq3$ and $K_2$ for $q=2$.
In particular, $|f(1)|\ne1$ or $|f(-1)|\ne1$ gives three-colorability, with a zero evaluation permitting any modulus.
The same compatibility and separation conditions will also guide constructions beyond these evaluations.
For algebraic noninteger parameters, we will choose successive character values while maintaining separation from zero.
For the trace constructions and the sextic example, we will choose periodic values and verify their shifted relations.
Evaluating at $1$, $-1$, or $i$ reduces the step images to a cycle.
We next consider higher-order roots of unity, where an Eisenstein quotient will supply a common coloring target.

\subsection{Eisenstein Quotients and Circulant Colorings}
\label{subsec:eisenstein-quotients}

The triangular lattice supplies a target with three undirected step directions.
Let $\omega=e^{2\pi i/3}$, and write $\mathcal U=\{\pm1,\pm\omega,\pm\omega^2\}$ for the six units of $\Z[\omega]$.
The lattice has vertices $a+b\omega$ with $a,b\in\Z$, and its allowed steps are the elements of $\mathcal U$.

First identify lattice vertices whose two coordinates agree modulo two.
The four classes $0$, $1$, $\omega$, and $1+\omega$ form $K_4$, since the three nonzero classes are images of Eisenstein units.
The usual triangular-lattice coloring $a+b\omega\mapsto a+b\pmod3$ does not descend to this quotient: the identified vertices $\omega$ and $2+\omega$ have different colors.
Figure~\ref{fig:eisenstein-quotient} shows how identifying vertices can require a fourth color.

\begin{figure}[ht]
	\centering
	\resizebox{\linewidth}{!}{%
		\input{tikz-code/eisenstein-quotient-background.tex}%
	}
	\caption{Identifying the Eisenstein lattice modulo $2\Z[\omega]$ gives $K_4$.
	The solid and dashed marked edges give all six edges of the quotient.
	The lattice coloring assigns different colors to $\omega$ and $2+\omega$, so it does not descend.}
	\label{fig:eisenstein-quotient}
\end{figure}

For an integer $q\geq2$, let $E_q=\Cay(\Z[\omega]/q\Z[\omega],\mathcal U\bmod q)$.
Using the basis $1,\omega$ and the identity $\omega^2=-1-\omega$, we may also write
\[E_q=\Cay\left((\Z/q\Z)^2,\{\pm(1,0),\pm(0,1),\pm(1,1)\}\right).\]
Thus $E_q$ can be represented by a $q\times q$ array with horizontal, vertical, and $(1,1)$-diagonal edges, with indices read modulo $q$.
We first color these graphs.
Subsection~\ref{subsec:cyclotomic-quotients} will then construct additive maps that send arbitrary cyclotomic directions to Eisenstein units.

For $q\geq3$, we can color $E_q$ by first passing to the circulant
\[C_q(1,2)=\Cay\bigl(\Z/q\Z,\{\pm1,\pm2\}\bigr).\]
The coordinate sum $\sigma_q(i,j)=i+j\pmod q$ gives a graph homomorphism $\sigma_q:E_q\to C_q(1,2)$, since the three positive step directions map to $1$, $1$, and $2$.
An ordinary or circular coloring of this smaller target therefore pulls back to $E_q$.
For example, when $q=7$, the cyclic word $012\,|\,0123$ colors $C_7(1,2)$: every three consecutive entries are distinct, including across the cyclic join.
Assigning to $(i,j)$ the color of $i+j\pmod7$ gives the coloring of $E_7$ in Figure~\ref{fig:eisenstein-seven}.

\begin{figure}[htbp]
	\centering
	\resizebox{\linewidth}{!}{%
		\input{tikz-code/eisenstein-seven.tex}%
	}
	\caption{The coordinate-sum map $E_7\to C_7(1,2)$ pulls back the coloring $012\,|\,0123$.
	Wraparound edges in $E_7$ are omitted.}
	\label{fig:eisenstein-seven}
\end{figure}

Heuberger's classification of the chromatic numbers of circulants with two generators~\cite[Theorem~3.2]{Heuberger2003}, together with $C_3(1,2)=K_3$, gives for $q\geq3$
\[\chi\bigl(C_q(1,2)\bigr)=
\begin{cases}
	3,&3\mid q,\\
	5,&q=5,\\
	4,&\text{otherwise}.
\end{cases}
\]
The exception $q=5$ is instructive: the cyclic target requires five colors, but $E_5$ itself has a four-coloring.
We therefore use the cyclic target when it is effective and color $E_5$ directly.

\begin{lemma}
	\label{lem:eisenstein-quotient-colors}
	For every integer $q\geq2$,
	\[\chi(E_q)=
	\begin{cases}
		3,&3\mid q,\\
		4,&3\nmid q.
	\end{cases}\]
	When $3\mid q$, every proper three-coloring is given by $c(i,j)=i+j\pmod3$, up to a permutation of the colors.
\end{lemma}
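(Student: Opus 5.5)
The plan is to prove the lower bound $\chi(E_q)\geq3$ for all $q$ and the upper bounds by explicit homomorphisms, then to settle the cases $3\nmid q$ with a rigidity argument on triangles. For every $q\geq2$ the vertices $0$, $(1,0)$, $(1,1)$ are pairwise adjacent. Their differences are $(1,0)$, $(0,1)$, $(1,1)$, and these are nonzero modulo $q$ even for $q=2$. So $E_q$ contains a triangle and $\chi(E_q)\geq3$.

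When $3\mid q$, the map $(i,j)\mapsto i+j\pmod3$ is well defined on $(\Z/q\Z)^2$. It sends the steps $\pm(1,0),\pm(0,1),\pm(1,1)$ to $\pm1,\pm1,\pm2$, all nonzero modulo $3$, so it is a proper three-coloring. For $3\nmid q$ with $q\geq3$ and $q\neq5$, compose $\sigma_q:E_q\to C_q(1,2)$ with a four-coloring of $C_q(1,2)$ from Heuberger's classification; this gives $\chi(E_q)\leq4$. For $q=2$, $E_2=K_4$ as already observed. For $q=5$, I would give an explicit four-coloring of the $5\times5$ array. One candidate is $c(i,j)=\lfloor 4\{(i+2j)/5\}\rfloor$-type rounding, or simply a table checked directly. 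Alternatively, use a linear map $(i,j)\mapsto ai+bj\pmod5$ into a target where $\pm a,\pm b,\pm(a+b)$ avoid $0$. With $a=1$, $b=2$ the images are $\pm1,\pm2,\pm3$, so the target is $K_5$, which is not good enough. Instead I would exhibit a concrete table, for example rows obtained by cyclic shifts of the word $01230$ adjusted by hand, and verify it.

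The key step is rigidity. In any proper three-coloring of $E_q$, each pair of triangles $\{v,v+(1,0),v+(1,1)\}$ and $\{v,v+(0,1),v+(1,1)\}$ shares the edge $v,v+(1,1)$. So $v+(1,0)$ and $v+(0,1)$ receive the same color. Hence $c(v+(1,0)-(0,1))$ depends only on $c(v)$ via $c(w+(1,-1))=c(w)$, and $c$ is constant along the direction $(1,-1)$. This means $c(i,j)=g(i+j)$ for a function $g$ on $\Z/q\Z$. Adjacency along $(1,0)$ forces $g(k)\neq g(k+1)$, and along $(1,1)$ it forces $g(k)\neq g(k+2)$. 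So $g(k),g(k+1),g(k+2)$ are pairwise distinct, and $g$ is determined by $g(0),g(1)$ as a period-three sequence. Going around $\Z/q\Z$, which is consistent because $(1,-1)$ generates a coset structure and $g$ is defined on $\Z/q\Z$, then requires $3\mid q$. This yields $\chi(E_q)=4$ when $3\nmid q$. When $3\mid q$, it shows every three-coloring is $i+j\pmod3$ up to permutation of colors.

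The main obstacle is purely the $q=5$ upper bound, which is the one case needing an ad hoc construction. The rigidity step needs care only for $q=2$, where the two triangles still make sense, since $(1,0)\neq(0,1)$ there.
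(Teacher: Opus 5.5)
Apart from the case $q=5$, your argument follows the paper's proof. It uses the triangle on $0,(1,0),(1,1)$ for the lower bound, $i+j\pmod 3$ when $3\mid q$, and $E_2=K_4$. For $q\geq3$, $q\neq5$, it pulls a four-coloring of $C_q(1,2)$ back along $\sigma_q$. The paper writes down the word $(012)^a(0123)^b$ with $q=3a+4b$ instead of citing Heuberger, which keeps it self-contained. Finally, it uses the rigidity argument from the two triangles sharing the edge $\{v,v+(1,1)\}$. Your phrase about ``going around $\Z/q\Z$'' should be replaced by the precise step. The relations $g(k+3)=g(k)$ and $g(k+q)=g(k)$ give $g(k+\gcd(3,q))=g(k)$. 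So for $3\nmid q$ the function $g$ is constant, contradicting $g(k)\neq g(k+1)$.

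The genuine gap is the upper bound $\chi(E_5)\leq4$. You need it for $\chi(E_5)=4$ but never establish it, and the candidates you suggest fail:
\begin{itemize}
\item The rounding $\lfloor4\{(i+2j)/5\}\rfloor$ gives $(0,0)$ and $(1,0)$ the same color $0$, because the step $(1,0)$ has value $1/5<1/4$.
\item More fundamentally, no coloring of the form $c(i,j)=h(ai+bj)$ can work. Such an $h$ would have to properly color the graph on $\Z/5\Z$ with steps $\pm a,\pm b,\pm(a+b)$, which requires $a,b,a+b\neq0$. But then $a,b,a+b$ cannot all lie in one class $\{\pm1\}$ or $\{\pm2\}$: two elements of $\{\pm1\}$ sum to $0$ or $\pm2$, and two elements of $\{\pm2\}$ sum to $0$ or $\pm1$. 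So that target is $K_5$.
\item Rows built from cyclic shifts of $01230$ also fail. That word repeats color $0$ across its cyclic join, which is a horizontal edge of $E_5$.
\end{itemize}
A genuinely two-dimensional array is therefore required, and the proof is incomplete until you write one down and check it. The paper does exactly this in its figure. For $j=0,\ldots,4$, the rows $c(0,j)\cdots c(4,j)$ are $01232$, $12010$, $20123$, $01201$, $23013$. One checks that entries at differences $(1,0)$, $(0,1)$, $(1,1)$, with indices read modulo five, always differ.
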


\begin{proof}
	For $q\geq3$ with $q\ne5$, let $b\in\{0,1,2\}$ be the remainder of $q$ modulo three and put $a=(q-4b)/3$.
	Then $a\geq0$ and $q=3a+4b$.
	Define a cyclic word of length $q$ by
	\[h_q=(012)^a(0123)^b,\]
	where the exponents denote concatenated copies, and color $s\in\Z/q\Z$ by the entry $h_q(s)$, indexed from zero.
	Every three consecutive entries are distinct, including across block joins and the cyclic join, so this is a proper four-coloring of $C_q(1,2)$.
	When $3\mid q$, we have $b=0$ and $h_q(s)=s\pmod3$, giving a three-coloring.
	Pulling it back along $\sigma_q$ gives the coloring
	\[c(i,j)=h_q(i+j\pmod q)\]
	of $E_q$.
	This extends the $q=7$ construction above.
	The case $q=2$ follows from $E_2=K_4$.
	For $q=5$, the circulant target is $C_5(1,2)=K_5$, so we use the direct four-coloring of $E_5$ in Figure~\ref{fig:eisenstein-five}.
	The accompanying array specifies the coloring at every $(i,j)$.
	Reading its indices modulo five, entries at differences $(1,0)$, $(0,1)$, and $(1,1)$ have different values, which verifies all edges, including the wraparound edges.

	\begin{figure}[htbp]
		\centering
		\input{tikz-code/eisenstein-five-background.tex}
		\caption{A four-coloring of $E_5$, with wraparound edges in light gray.
		The array gives the color of $(i,j)$, with $i$ indexing columns and $j$ indexing rows.
		The labels $0,1,2,3$ denote red, blue, green, and yellow, respectively; all indices are read modulo five.}
		\label{fig:eisenstein-five}
	\end{figure}

	The graph $E_q$ contains a triangle, so at least three colors are necessary.
	Suppose that $q\geq3$ and $c$ is a proper three-coloring, and put $e_1=(1,0)$ and $e_2=(0,1)$.
	For every vertex $z$, the triangles with vertices $\{z,z+e_1,z+e_1+e_2\}$ and $\{z,z+e_2,z+e_1+e_2\}$ share an edge, so their third vertices have the same color:
	\[c(z+e_1)=c(z+e_2).\]
	Thus $c$ is constant on the cosets of $\langle e_1-e_2\rangle$, and we may write $c(i,j)=h(i+j)$ for a map $h:\Z/q\Z\to\{0,1,2\}$.
	The triangles show that $h(s),h(s+1),h(s+2)$ are pairwise distinct for every $s$.
	It follows that $h(s+3)=h(s)$.
	Since consecutive entries are distinct, this is compatible with period $q$ only when $3\mid q$.
	In that case, the first three values of $h$ determine a permutation of the coloring $s\mapsto s\pmod3$.
	Together with $E_2=K_4$ and the upper bounds above, this proves the result.
\end{proof}

\subsection{The Cyclotomic Congruence Criterion}
\label{subsec:cyclotomic-quotients}

We now evaluate at higher-order roots of unity.
Fix a primitive $m$th root $\zeta_m$, let $\Phi_m$ be its cyclotomic polynomial, and write $\mathcal O_m=\Z[\zeta_m]$.
If $f(\zeta_m)\in q\mathcal O_m$ for an integer $q\geq2$, the evaluation principle gives a ring homomorphism $\eta_{m,q}:\Z[\alpha]\to\mathcal O_m/q\mathcal O_m$ by $p(\alpha)\mapsto p(\zeta_m)\pmod{q\mathcal O_m}$.
The allowed steps map to classes of roots of unity.
These are units in a nonzero quotient ring, so none is zero.

To use the Eisenstein colorings, we seek an additive map $L_m:\mathcal O_m\to\Z[\omega]$ that sends every power of $\zeta_m$ into $\mathcal U$.
Such a map descends modulo $q$ and carries the quotient's allowed steps into those of $E_q$.
The identities $1+(-1)=0$ and $1+\omega+\omega^2=0$ supply the zero-sum pairs and triples needed to respect the prime-power relations.
The Shift Criterion (Proposition~\ref{prop:shift-criterion}) reduces this task to choosing labels that respect the cyclotomic relations.
For ninth roots, write $\zeta=\zeta_9$.
The polynomial $\Phi_9(x)=1+x^3+x^6$ gives the three relations
\begin{align*}
	1+\zeta^{3}+\zeta^{6} &= 0	\\
	\zeta+\zeta^{4}+\zeta^{7} &= 0 \\
	\zeta^2+\zeta^{5}+\zeta^{8} &= 0
\end{align*}
Assign $L_9(\zeta^j)=1$, $L_9(\zeta^{j+3})=\omega$ and $L_9(\zeta^{j+6})=\omega^2$ for $0\leq j\leq2$.
The first six powers form an integral basis, and $1+\omega+\omega^2=0$, so these assignments define an additive map with the required images.
Only additivity is needed: $L_9(\zeta)=1$, while $L_9(\zeta^3)=\omega$.
Figure~\ref{fig:ninth-roots-eisenstein} shows the three relations as closed walks and their common image under $L_9$.

\begin{figure}[htbp]
	\centering
	\input{tikz-code/ninth-roots-eisenstein.tex}
	\caption{The three ninth-root walks have the same image under $L_9$.
	Edge colors indicate the images $1,\omega,\omega^2$.}
	\label{fig:ninth-roots-eisenstein}
\end{figure}

The same construction works for every prime-power order.

\begin{lemma}[Prime Power Case]
	\label{lem:prime-power-labels}
	For every prime power $m=p^a$ with $a\geq1$, there is an additive map $L_m:\mathcal O_m\to\Z[\omega]$ that sends every power of $\zeta_m$ into $\mathcal U$.
\end{lemma}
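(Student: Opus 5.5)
We construct $L_m$ by prescribing its values on an integral basis of $\mathcal O_m$, generalizing the ninth-root construction. Write $m=p^a$ and $\zeta=\zeta_m$. The $d=\varphi(m)=p^{a-1}(p-1)$ powers $1,\zeta,\ldots,\zeta^{d-1}$ form a $\Z$-basis of $\mathcal O_m$. Hence any prescription of values on these powers defines a unique additive map. The task is therefore to choose $L_m$ on the full set of powers $\zeta^k$, $0\le k<m$, so that every value lies in $\mathcal U$ and the choice respects all $\Z$-linear relations among the powers.

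We first record those relations. Since $\Phi_m(x)=\sum_{t=0}^{p-1}x^{tp^{a-1}}$, the shifted relations from the Shift Criterion (Proposition~\ref{prop:shift-criterion}) read
\[
\sum_{t=0}^{p-1}\zeta^{j+tp^{a-1}}=0\qquad(j\ge0).
\]
These are the only relations needed, because $\mathcal O_m\cong\Z[x]/(\Phi_m)$. Also $\zeta^m=1$, so we work with exponents modulo $m$. Each relation involves exactly one full coset $j+p^{a-1}\Z/m\Z$ of the subgroup of order $p$. The cosets are indexed by $j\bmod p^{a-1}$, and the relations say precisely that, for each $j\in\Z/p^{a-1}\Z$, the labels on its coset sum to zero.

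It therefore suffices to label, for each residue $j$ modulo $p^{a-1}$, the $p$ powers $\zeta^{j+tp^{a-1}}$ ($t=0,\ldots,p-1$) by elements of $\mathcal U$ summing to $0$. The labels for different $j$ may be chosen independently. We then set $L_m(\zeta^k)$ accordingly for all $k$ and apply the Shift Criterion with $g_n=L_m(\zeta^n)$, which is periodic in $n$ with period $m$. This yields the additive map, and every power of $\zeta$ lands in $\mathcal U$ by construction.

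The remaining step is to write $0$ as a sum of exactly $p$ elements of $\mathcal U$ for every prime $p$. For $p=2$, use $1+(-1)$. For $p=3$, use $1+\omega+\omega^2$. For an odd prime $p\ge5$, write $p=3+2k$ and use $1+\omega+\omega^2$ followed by $k$ pairs $1+(-1)$. For $p=2$ the ordering of labels within the coset does not matter. The construction for $p=3$, $a=2$ then reproduces $L_9$.

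The only step requiring care is confirming that the relations listed are complete, that is, that the kernel of $\Z[x]\to\mathcal O_m$, together with the reduction $x^m=1$, is additively generated by the shifts $x^n\Phi_m$. This holds because $\Phi_m$ is monic and $\Z[\zeta]\cong\Z[x]/(\Phi_m)$, which is exactly the setting of the Shift Criterion. The period-$m$ labeling is consistent with $\zeta^m=1$ automatically, since $x^m-1$ is a multiple of $\Phi_m$.
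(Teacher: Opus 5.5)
Your proposal is correct and matches the paper's proof. You split the powers of $\zeta_m$ into the $p^{a-1}$ relation blocks coming from $\Phi_m(x)=\sum_{t=0}^{p-1}x^{tp^{a-1}}$. You label each block by $1,-1$ when $p=2$, and by $1,\omega,\omega^2$ followed by $(p-3)/2$ cancelling pairs when $p$ is odd, then extend periodically with period $m$. Routing the well-definedness check through the Shift Criterion is just a more explicit version of the paper's remark that labels define an additive map exactly when each block sums to zero.
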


\begin{proof}
	Put $M=p^{a-1}$ and $\zeta=\zeta_m$.
	Since $\Phi_m(x)=1+x^M+\cdots+x^{(p-1)M}$, the powers of $\zeta$ satisfy
	\[\zeta^i+\zeta^{i+M}+\cdots+\zeta^{i+(p-1)M}=0\]
	for $0\leq i<M$.
	The first $(p-1)M=\varphi(m)$ powers form an integral basis, where $\varphi(m)=\deg\Phi_m$ is Euler's totient function.
	Thus labels on the $m$ powers define an additive map precisely when the labels in each displayed block sum to zero.
	For $p=2$, assign the labels $1,-1$ to each block.
	For odd $p$, assign $1,\omega,\omega^2$, followed by $(p-3)/2$ pairs $1,-1$.
	Each block has sum zero and all labels lie in $\mathcal U$.
	The powers repeat with period $m$, so every power has the required image.
\end{proof}

For general orders, we combine the maps from coprime factors by multiplying their labels.
We state this step for an arbitrary commutative target ring so that it also applies to the finite-field constructions in Section~3.

\begin{lemma}[Composite Case]
	\label{lem:coprime-labels}
	Let $a,b\geq2$ be coprime, let $R$ be a commutative ring with identity, and let $L_a:\mathcal O_a\to R$ and $L_b:\mathcal O_b\to R$ be additive maps.
	There is an additive map $L:\mathcal O_{ab}\to R$ satisfying
	\[L(uv)=L_a(u)L_b(v)\]
	for all $u\in\mathcal O_a$ and $v\in\mathcal O_b$.
	If $L_a$ and $L_b$ send powers of their respective roots to units of $R$, then $L$ sends every power of $\zeta_{ab}$ to a unit of $R$.
\end{lemma}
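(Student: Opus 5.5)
The plan is to use the standard tensor-product description of cyclotomic integers for coprime orders. Since $\gcd(a,b)=1$, we may take $\zeta_{ab}=\zeta_a\zeta_b$, where $\zeta_a=\zeta_{ab}^{\,b'}$ and $\zeta_b=\zeta_{ab}^{\,a'}$ for suitable integers with $a'a+b'b\equiv 1$ adjusted appropriately. Then $\mathcal O_{ab}=\mathcal O_a\mathcal O_b$. The key input is that multiplication induces an isomorphism of additive groups
\[
\mu:\mathcal O_a\otimes_{\Z}\mathcal O_b\longrightarrow\mathcal O_{ab},\qquad u\otimes v\mapsto uv.
\]
Given this, the map $(u,v)\mapsto L_a(u)L_b(v)$ is $\Z$-bilinear into $R$, because $L_a$, $L_b$ are additive and multiplication in $R$ is distributive. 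It therefore factors through the tensor product. Setting $L=\beta\circ\mu^{-1}$, where $\beta(u\otimes v)=L_a(u)L_b(v)$, gives an additive map with $L(uv)=L_a(u)L_b(v)$.

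To justify the isomorphism, I would use integral bases. The powers $\zeta_a^i$ for $0\le i<\varphi(a)$ form a $\Z$-basis of $\mathcal O_a$, and similarly for $b$. The products $\zeta_a^i\zeta_b^j$ span $\mathcal O_{ab}$, since every power of $\zeta_{ab}$ is a product of a power of $\zeta_a$ and a power of $\zeta_b$, and the latter powers are integer combinations of the basis powers. There are $\varphi(a)\varphi(b)=\varphi(ab)$ of these products, which equals the rank of $\mathcal O_{ab}$. A spanning set of a free abelian group of rank $n$ with $n$ elements is a basis. So $\mu$ sends a basis to a basis and is an isomorphism. Equivalently, I could use $[\Q(\zeta_{ab}):\Q]=\varphi(a)\varphi(b)$ and linear disjointness. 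The integral-basis count is cleaner, and it only uses the standard fact that $\Z[\zeta_m]$ has basis $1,\ldots,\zeta_m^{\varphi(m)-1}$, already invoked in Lemma~\ref{lem:prime-power-labels}.

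For the final claim, every power $\zeta_{ab}^n$ equals $\zeta_a^{n}\zeta_b^{n}$, so $L(\zeta_{ab}^n)=L_a(\zeta_a^n)L_b(\zeta_b^n)$. This is a product of two units of $R$, hence a unit.

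The main obstacle is the spanning-plus-rank argument establishing that the products form a basis. The point needing care is that one must first fix the identification $\zeta_{ab}=\zeta_a\zeta_b$ with $\zeta_a,\zeta_b$ powers of $\zeta_{ab}$. Once that is in place, everything else is formal. I would note that well-definedness is exactly why the tensor isomorphism is needed: defining $L$ on the basis $\zeta_a^i\zeta_b^j$ by $L_a(\zeta_a^i)L_b(\zeta_b^j)$ directly would also work. Multiplicativity for general $u,v$ then follows by bilinearity, expanding $u$ and $v$ in their bases.
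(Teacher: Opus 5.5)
Your proposal is correct and takes essentially the same route as the paper. You identify $\mathcal O_{ab}=\Z[\zeta_a,\zeta_b]$, show that the $\varphi(a)\varphi(b)=\varphi(ab)$ products $\zeta_a^i\zeta_b^j$ span and therefore form an integral basis, define $L$ on that basis, and obtain $L(uv)=L_a(u)L_b(v)$ by expanding $u$ and $v$. The tensor-product framing is only a repackaging of defining $L$ on the basis.

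One small slip, which does not affect the argument. With $a'a+b'b=1$ and $\zeta_{ab}=\zeta_a\zeta_b$, the exponent that recovers $\zeta_a$ is $b'b$, not $b'$. In any case, the proof only needs that every power of $\zeta_{ab}$ is a product of a power of $\zeta_a$ and a power of $\zeta_b$, and this holds for any choice of primitive root.
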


\begin{proof}
	Write $\xi=\zeta_a$ and $\eta=\zeta_b$.
	Since $a$ and $b$ are coprime, $\xi\eta$ has order $ab$, and both $\xi$ and $\eta$ are powers of $\xi\eta$.
	Hence $\mathcal O_{ab}=\Z[\xi,\eta]$.
	Reduction by the monic polynomials $\Phi_a$ and $\Phi_b$ shows that the products $\xi^i\eta^j$ with $0\leq i<\varphi(a)$ and $0\leq j<\varphi(b)$ span $\mathcal O_{ab}$ over $\Z$.
	There are $\varphi(a)\varphi(b)=\varphi(ab)$ such products, equal to the degree of $\Q(\zeta_{ab})$ over $\Q$, so they form an integral basis.
	Define $L$ on this basis by
	\[L(\xi^i\eta^j)=L_a(\xi^i)L_b(\eta^j)\]
	and extend additively.
	Expanding $u$ and $v$ in their respective bases proves the displayed identity.
	Every power of $\zeta_{ab}$ is a product of powers of $\xi$ and $\eta$, so its image is a unit whenever the factor labels are units.
\end{proof}

Taking $R=\Z[\omega]$ and combining the prime-power maps gives $L_m:\mathcal O_m\to\Z[\omega]$ with every power mapped into $\mathcal U$ for all $m>1$.
For $m=1$, use $L_1(n)=n$ on $\mathcal O_1=\Z$.
Additivity gives $L_m(qz)=qL_m(z)$, so every $L_m$ descends modulo $q$.
We can now combine evaluation with the Eisenstein colorings.
To state the result, we write
\[f(x)\equiv0\pmod{q,\Phi_m(x)}\]
when $f(x)=qA(x)+\Phi_m(x)B(x)$ for some $A,B\in\Z[x]$.
Since $\mathcal O_m\cong\Z[x]/(\Phi_m)$, this is exactly the condition $f(\zeta_m)\in q\mathcal O_m$ used above.

\begin{proposition}[Cyclotomic Congruence Criterion]
	\label{prop:cyclotomic-congruence}
	If $m\geq1$ and $q\geq2$ satisfy
	\[f(x)\equiv0\pmod{q,\Phi_m(x)},\]
	then $\chi(\Gamma_\alpha)\leq4$.
	If $3\mid q$, then $\chi(\Gamma_\alpha)\leq3$.
\end{proposition}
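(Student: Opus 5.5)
The plan is to build a chain of additive maps from $\Z[\alpha]$ to the Eisenstein quotient $E_q$, check that every allowed step lands on an allowed step of $E_q$, and then pull back the colorings from Lemma~\ref{lem:eisenstein-quotient-colors}.

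\emph{Step 1: evaluation.} The hypothesis $f(x)=qA(x)+\Phi_m(x)B(x)$ gives $f(\zeta_m)=qA(\zeta_m)\in q\mathcal O_m$. So $t=\zeta_m \bmod q\mathcal O_m$ is a root of $f$ in the commutative ring $R=\mathcal O_m/q\mathcal O_m$. By the evaluation principle following the Shift Criterion (Proposition~\ref{prop:shift-criterion}), we obtain a ring homomorphism $\eta_{m,q}:\Z[\alpha]\to\mathcal O_m/q\mathcal O_m$ with $\alpha^n\mapsto\zeta_m^n \bmod q$. It sends each allowed step $\pm\alpha^n$ to the class of $\pm\zeta_m^n$.

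\emph{Step 2: transfer to Eisenstein integers.} Take the additive map $L_m:\mathcal O_m\to\Z[\omega]$ with $L_m(\zeta_m^n)\in\mathcal U$ for all $n$. For $m=1$ use $L_1(n)=n$. For $m>1$, obtain it from Lemma~\ref{lem:prime-power-labels} on each prime-power factor of $m$, combined by Lemma~\ref{lem:coprime-labels} with $R=\Z[\omega]$; the units there are products of elements of $\mathcal U$, hence lie in $\mathcal U$. Since $L_m$ is additive, $L_m(q\mathcal O_m)\subseteq q\Z[\omega]$, so it descends to $\bar L_m:\mathcal O_m/q\mathcal O_m\to\Z[\omega]/q\Z[\omega]$.

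\emph{Step 3: composition and coloring.} Set $h=\bar L_m\circ\eta_{m,q}$. This map is additive, and
\[h(\pm\alpha^n)=\pm L_m(\zeta_m^n)\bmod q\in\mathcal U\bmod q.\]
One must check that these images are nonzero in $\Z[\omega]/q\Z[\omega]$. They are: a unit of $\Z[\omega]$ cannot lie in $q\Z[\omega]$ when $q\geq2$, because its norm $1$ is not divisible by $q^2$. Hence $h$ is a graph homomorphism $\Gamma_\alpha\to E_q$. Pulling back the colorings of Lemma~\ref{lem:eisenstein-quotient-colors} gives $\chi(\Gamma_\alpha)\leq\chi(E_q)\leq4$, and $\leq3$ when $3\mid q$.

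Every step relies on earlier lemmas, so the only genuine points to verify are the descent modulo $q$ and that the step images are nonzero; the norm argument above settles the latter. The main potential subtlety is the case $m=1$. There $\mathcal O_1=\Z$ and $L_1$ maps into $\Z\subset\Z[\omega]$, sending $\pm1$ into $\mathcal U$, so the argument applies unchanged.
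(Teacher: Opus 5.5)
Your proposal is correct and follows the paper's proof essentially step for step. Both arguments evaluate into $\mathcal O_m/q\mathcal O_m$, compose with the reduction of the Eisenstein labeling $L_m$, and pull back the colorings of $E_q$ from Lemma~\ref{lem:eisenstein-quotient-colors}. Your norm argument simply spells out what the paper asserts directly, namely that Eisenstein units have nonzero class modulo $q$.
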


\begin{proof}
	The congruence gives $f(\zeta_m)=qA(\zeta_m)$, so evaluation defines $\eta_{m,q}:\Z[\alpha]\to\mathcal O_m/q\mathcal O_m$ as above.
	Compose it with the reduction $L_{m,q}$ of $L_m$ to obtain
	\[\Z[\alpha]\xrightarrow{\;\eta_{m,q}\;}\mathcal O_m/q\mathcal O_m
	\xrightarrow{\;L_{m,q}\;}\Z[\omega]/q\Z[\omega].\]
	Every allowed step maps to the nonzero class of an Eisenstein unit, so the composite is a graph homomorphism $\Gamma_\alpha\to E_q$.
	Pulling back the colorings from Lemma~\ref{lem:eisenstein-quotient-colors} proves the bounds.
\end{proof}

For $q\geq3$, the construction gives the chain
\[\Gamma_\alpha\to E_q\xrightarrow{\;\sigma_q\;}C_q(1,2).\]
The first arrow combines cyclotomic evaluation with the Eisenstein labels, and the second adds the two Eisenstein coordinates.
For $q\geq3$ with $q\ne5$, a coloring of the circulant pulls back along this chain to color $\Gamma_\alpha$.
The cases $q=2,5$ use the direct colorings of $E_q$.

More generally, a coloring of $\Gamma_{\zeta_m}$ transfers by evaluation to $\Gamma_\alpha$ precisely when it is invariant under translation by every element of $f(\zeta_m)\mathcal O_m$.
It suffices to check invariance under the translations $\zeta_m^j f(\zeta_m)$ for $0\leq j<\varphi(m)$, since these generate the ideal additively.
In the congruence construction, these translations vanish modulo $q\mathcal O_m$.

To check the criterion, use polynomial long division to divide $f(x)$ by $\Phi_m(x)$.
The congruence holds precisely when every coefficient of the remainder is divisible by $q$.
A common divisor $q\geq2$ therefore gives a four-coloring, and divisibility of all coefficients by three gives a three-coloring.
A zero remainder satisfies the test for every $q$.

Only finitely many cyclotomic polynomials need to be checked.
If $\varphi(m)>d$, the remainder is $f$ itself, whose coefficients have gcd one because $f$ is primitive.
The elementary bound $\varphi(m)\geq\sqrt{m/2}$ therefore gives the explicit cutoff $m\leq2d^2$.
Within this range, we need only divide by the cyclotomic polynomials of degree at most $d$.
These divisions give a quick filter for graphs with low chromatic number.

A polynomial can nevertheless avoid all of these tests.
For example,
\[f(x)=x^{10}-x^6+x^5-x^4+1\]
is irreducible and has roots on the unit circle, but the remainder for each of the twenty orders with $\varphi(m)\leq10$ contains a coefficient $1$ or $-1$.
The criterion therefore gives no coloring bound for the corresponding graphs $\Gamma_\alpha$.
This example motivates a more flexible quotient: instead of seeking an integer modulus that annihilates $f(\zeta_m)$, we can impose the evaluated relation itself.
Trace polynomials will make that quotient easier to describe, and the character method will provide its colorings.

\subsection{Trace Polynomials and Cyclotomic Quotients}
\label{subsec:trace-quotients}
\label{subsec:reciprocity}
\label{subsec:cyclotomic-norm}

Cyclotomic evaluation can yield a useful finite target even when the evaluated relation is not divisible by an integer greater than one.
We first use a trace polynomial to express the relation in a real variable.
A worked eighth-root example will then show how the evaluated relation gives a finite target even when every integer congruence test fails.
For the remainder of this section, assume that $\alpha$ is algebraic and is not a root of unity.

Consider the parameter $\alpha=(\sqrt{14}+i\sqrt2)/4$.
Its minimal polynomial can be written as
\[f(x)=2x^4-3x^2+2=x^2\bigl(2(x+x^{-1})^2-7\bigr).\]
The expression $x+x^{-1}$ lets us work with the quadratic $F(y)=2y^2-7$ in place of the quartic.

The same reduction applies to every parameter under consideration here.
Since $\alpha^{-1}=\overline\alpha$, irreducibility implies that $f$ is reciprocal of even degree $2r$, meaning that its coefficients form a palindromic sequence.
Every such polynomial has a unique representation
\[f(x)=x^rF(x+x^{-1}),\]
where $F\in\Z[y]$ is called its \emph{trace polynomial}.
It has degree $r$ and the same leading coefficient as $f$.
For $\zeta_m=e^{2\pi i/m}$, write $t_m=\zeta_m+\zeta_m^{-1}=2\cos(2\pi/m)$.
We call $t_m$ the \emph{real evaluation point}.
The identity
\[f(\zeta_m)=\zeta_m^rF(t_m)\]
lets us perform the cyclotomic evaluation using the trace polynomial.
The factor $\zeta_m^r$ is a unit in $\Z[\zeta_m]$, so an integer $q\geq2$ divides $f(\zeta_m)$ precisely when it divides $F(t_m)$ in that ring.

The roots of unity considered in the table below give integer evaluation points.
These values are computationally convenient.
\[
\begin{array}{|c|c|c|}
\hline
\text{Value of }F & \text{Evaluation at} & \text{Target} \\
\hline
F(2)  & 1       & \Cay(\Z/q\Z,\{\pm1\}) \\
\hline
F(-2) & -1      & \Cay(\Z/q\Z,\{\pm1\}) \\
\hline
F(0)  & i       & \Cay(\Z/q\Z,\{\pm1\}) \\
\hline
F(1)  & \zeta_6 & E_q \\
\hline
F(-1) & \zeta_3 & E_q \\
\hline
\end{array}
\]
In each row, an integer $q\geq2$ dividing the displayed value of $F$ gives the indicated target.
The first three rows follow from Lemma~\ref{lem:elementary-evaluation} and give $\chi_c(\Gamma_\alpha)\leq q/\lfloor q/2\rfloor\leq3$.
The last two follow from the Cyclotomic Congruence Criterion and give four colors, or three when $3\mid q$.
An evaluation of $F$ equal to $1$ or $-1$ supplies no such divisor, so that particular evaluation gives no coloring by this test.

For the quartic example above, $F(y)=2y^2-7$ gives $F(-2)=F(2)=1$.
Those two evaluations supply no modulus, but $F(0)=-7$ allows us to choose $q=7$ in the third row.
We obtain a map to $C_7$ and hence $\chi_c(\Gamma_\alpha)\leq7/3$.
Since $f(1)=1$ is odd, Lemma~\ref{lem:bipartite-criterion} also gives $\chi(\Gamma_\alpha)=3$.

For the degree-ten example from Subsection~\ref{subsec:cyclotomic-quotients}, the trace polynomial is
\[F(y)=y^5-5y^3+4y+1.\]
All five integer evaluation points give $F(y)=1$, and this polynomial also escapes every cyclotomic congruence test from that subsection.
Evaluating at an eighth root nevertheless gives a useful relation.
The real evaluation point is $t_8=\sqrt2$, so
\[f(\zeta_8)=\zeta_8^5F(\sqrt2)=\zeta_8^5(1-2\sqrt2).\]
To make this evaluation vanish, we identify elements whose difference is a multiple of $1-2\sqrt2$.
Since $\zeta_8^5$ is a unit, this also makes $f(\zeta_8)$ vanish and gives the quotient
\[R_8(f)=\Z[\zeta_8]/(1-2\sqrt2)\Z[\zeta_8].\]

We can describe this quotient using two coordinates modulo seven.
First consider its real coefficients, which lie in
\[S_8=\Z[\sqrt2]/(1-2\sqrt2)\Z[\sqrt2].\]
Using brackets for quotient classes, the defining relation is $2[\sqrt2]=1$.
Squaring gives $8=1$, so $7=0$ and hence $[\sqrt2]=4$.
Conversely, substituting $\sqrt2\mapsto4$ modulo seven respects both $(\sqrt2)^2=2$ and $1-2\sqrt2=0$, so $S_8\cong\Z/7\Z$.
The identity $\zeta_8^2=\sqrt2\zeta_8-1$ gives
\[\Z[\zeta_8]=\Z[\sqrt2]\oplus\Z[\sqrt2]\zeta_8.\]
Thus every element of $R_8(f)$ has a unique expression $a+b[\zeta_8]$ with $a,b\in\Z/7\Z$.
The quotient has $49$ elements, and the relation $[\zeta_8]^2=4[\zeta_8]-1$ determines the coordinates of every root direction.

Let $\Gamma_8(f)$ be the Cayley graph on $R_8(f)$ with steps $\pm[\zeta_8^j]$ for $0\leq j<8$.
The evaluation principle gives a graph homomorphism $\Gamma_\alpha\to\Gamma_8(f)$ by $p(\alpha)\mapsto[p(\zeta_8)]$: the defining ideal makes $f(\zeta_8)$ vanish, and the root classes are nonzero units.
To color the finite target, use the additive map $h:R_8(f)\to\Z/7\Z$ where 
\[h\bigl(a+b[\zeta_8]\bigr)=5a+4b\pmod7.\]
The quadratic relation gives the successive labels of $1,\zeta_8,\ldots,\zeta_8^7$ as
\[(5,4,4,5,2,3,3,2).\]
Every allowed step therefore maps to one of $2,3,4,5$, giving
\[\Gamma_\alpha\longrightarrow\Gamma_8(f)
\longrightarrow K_{7/2}
=\Cay(\Z/7\Z,\{2,3,4,5\}).\]
These step images have circular separation at least $2/7$, so $\chi_c(\Gamma_\alpha)\leq7/2$.
The independent sets $\{0,1\}$, $\{2,3\}$, $\{4,5\}$, and $\{6\}$ of $K_{7/2}$ give an explicit four-coloring, shown in Figure~\ref{fig:eighth-root-coloring}.

\begin{figure}[htbp]
    \centering
    \input{tikz-code/eighth-root-coloring.tex}
    \caption{The map $h:a+b[\zeta_8]\mapsto5a+4b\pmod7$ pulls back a four-coloring of $K_{7/2}$ to $\Gamma_8(f)$.
    Array entries are the values of $h$; wraparound edges are omitted.}
    \label{fig:eighth-root-coloring}
\end{figure}

The size of this target is already visible in the two conjugate evaluations of $F$:
\[\left|F(\sqrt2)F(-\sqrt2)\right|
=\left|(1-2\sqrt2)(1+2\sqrt2)\right|=7.\]
This absolute real norm counts the choices for each coordinate; its square counts the vertices of $\Gamma_8(f)$.
The norm will also control the available colorings when we pass to other cyclotomic orders.

\subsection{The Cyclotomic Norm Criterion}
\label{subsec:general-norm-criterion}

We now extend the construction from the degree-ten example.
Fix $m\geq3$ and retain the notation $\mathcal O_m=\Z[\zeta_m]$.
As in the example, impose the evaluated relation directly by setting
\[R_m(f)=\mathcal O_m/f(\zeta_m)\mathcal O_m
=\mathcal O_m/F(t_m)\mathcal O_m.\]
The two ideals agree because their generators differ by the unit $\zeta_m^r$.
Irreducibility and the assumption that $\alpha$ is not a root of unity give $f(\zeta_m)\ne0$, so the quotient is finite.
It has more than one element precisely when $f(\zeta_m)$ is not a unit.
In that case, the same evaluation argument gives a graph homomorphism
\[\Gamma_\alpha\longrightarrow\Gamma_m(f)
=\Cay\bigl(R_m(f),\{\pm[\zeta_m^j]:0\leq j<m\}\bigr).\]
These targets belong to the cyclotomic graph family studied by Zhou~\cite{Zhou2019}: in his notation, $\Gamma_m(f)=G_m(A)$ with $A=f(\zeta_m)\mathcal O_m$.
We use colorings of these graphs to obtain bounds for $\Gamma_\alpha$.
If $f(\zeta_m)\in q\mathcal O_m$, the further reduction $R_m(f)\to\mathcal O_m/q\mathcal O_m$ recovers the earlier Cyclotomic Congruence Criterion.

The two-coordinate description also extends to every $m\geq3$.
The relation
\[\zeta_m^2=t_m\zeta_m-1\]
expresses every power of $\zeta_m$ as $a+b\zeta_m$ with $a,b\in\Z[t_m]$.
This expression is unique because the coefficients are real and $\zeta_m$ is nonreal, so
\[\mathcal O_m=\Z[t_m]\oplus\Z[t_m]\zeta_m.\]
It also follows that $\Z[t_m]$ is the full ring of integers of $\Q(t_m)$: an algebraic integer in this real field belongs to $\mathcal O_m$ and has $b=0$ in this representation.
Defining
\[S_m=\Z[t_m]/F(t_m)\Z[t_m],\]
we obtain $R_m(f)\cong S_m^2$ as additive groups, with $(a,b)$ representing $a+b[\zeta_m]$.

To count these coordinates, put $d_m=\varphi(m)/2$, the degree of the real field $\Q(t_m)$.
The absolute real norm is the absolute value of the product over its $d_m$ conjugate evaluations:
\[D_m = \left|N_{\mathbb Q(t_m)/\mathbb Q}\!\left(F(t_m)\right)\right| = \left|\prod_{\substack{1\leq k<m/2\\\gcd(k,m)=1}}F\bigl(2\cos(2\pi k/m)\bigr)\right|.\]
Multiplication by $F(t_m)$ on an integer basis of the real ring has determinant equal to its norm.
The absolute determinant is the index of its image ideal, so $|S_m|=D_m$.
Since the full quotient has two such coordinates,
\[|R_m(f)|=D_m^2.\]
Thus $D_m>1$ gives a nontrivial finite target, while $D_m=1$ gives the one-element quotient.

In the example, $D_8=7$ made each coordinate cyclic.
In general, $S_m$ is a quotient of $\Z^{d_m}$ and hence a product of at most $d_m$ finite cyclic groups.
Consequently, $\Gamma_m(f)$ is a multidimensional circulant with at most $2d_m=\varphi(m)$ cyclic coordinates.
It has two cyclic coordinates whenever $S_m$ is cyclic, in particular when $D_m$ is prime.

The quadratic relation computes the allowed steps just as it did for eighth roots.
Write $v_j\in S_m^2$ for the coordinate vector of $[\zeta_m^j]$.
Starting with $v_0=(1,0)$ and $v_1=(0,1)$, the remaining vectors satisfy
\[v_{j+1}=[t_m]v_j-v_{j-1}\]
for $j\geq1$.
A character of the quotient is specified by two additive characters $\lambda,\mu:S_m\to\T$ through
\[\psi\bigl(a+b[\zeta_m]\bigr)=\lambda(a)+\mu(b).\]
Its image is finite cyclic, so keeping the root images separated from zero gives a circulant coloring target, as the map $h$ did in the example.
The following theorem guarantees separation at least $1/4$ when $D_m$ is sufficiently large.
We start with the Eisenstein character on $\mathcal O_m$, whose root images have separation $1/3$.
It need not vanish on the evaluated ideal, so it need not define a character of the quotient.
The aim is to adjust it to vanish there while moving each root image by at most $1/12$.

There are two steps.
First, the norm is a product of conjugate sizes, and a large product can still have a very small factor.
Multiplying the ideal generator by a suitable real unit balances those sizes without changing the ideal.
Second, we approximate the original character by one that vanishes on the ideal.
The balanced conjugates make the approximation error small on every root direction, preserving separation at least $1/4$.

\begin{theorem}[Cyclotomic Norm Criterion]
	\label{thm:cyclotomic-norm}
	For each $m\geq3$, there is a constant $B_m\geq1$, depending only on $m$, with the following property.
	If the trace polynomial $F$ of a noncyclotomic algebraic direction parameter satisfies $D_m>B_m$, then $R_m(f)$ admits a character whose separation on every root direction is at least $1/4$.
	Consequently, $\Gamma_m(f)$ maps to a four-colorable circulant, and
	\[\chi(\Gamma_\alpha)\leq\chi(\Gamma_m(f))\leq4.\]
\end{theorem}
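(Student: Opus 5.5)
Following the outline given just before the statement, I will work with the real trace ring $A_m=\Z[t_m]$, of rank $d_m$, and its Minkowski embedding $A_m\otimes\R\cong\R^{d_m}$. Characters of $\mathcal O_m$ that vanish on the ideal $I=F(t_m)\mathcal O_m$ correspond to characters of the finite group $R_m(f)\cong S_m^2$. Using the decomposition $\mathcal O_m=A_m\oplus A_m\zeta_m$, such a character is a pair $(\lambda,\mu)$ of characters of $A_m$ that vanish on the principal ideal $gA_m$, where $g=F(t_m)$.

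The starting point is the Eisenstein character $\psi_0=\rho\circ L_m$. Here $L_m$ is the additive map from Lemma~\ref{lem:prime-power-labels} and Lemma~\ref{lem:coprime-labels}, and $\rho:\Z[\omega]\to\T$ is the character $a+b\omega\mapsto(a+b)/3$. Every $\zeta_m^j$ goes to $\pm1/3$, so $\|\psi_0(\zeta_m^j)\|_\T=1/3$. The goal is a character $\psi$ of $\mathcal O_m$ with $\psi|_I=0$ and $\|\psi(\zeta_m^j)-\psi_0(\zeta_m^j)\|_\T\leq1/12$ for all $j$. Such a $\psi$ has separation at least $1/4$. Because it descends to $R_m(f)$, its image is finite cyclic, and composing with $K_{p/q}$ rounding as in Subsection~\ref{subsec:characters-circulants} together with Lemma~\ref{lem:character-separation} gives the four-colorable circulant target.

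The construction has two steps.

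\textbf{Balancing.} By Dirichlet's unit theorem, the unit lattice of $A_m$ is a full lattice in the trace-zero hyperplane of $\R^{d_m}$, and it has a fundamental domain of diameter $c_m$ in log coordinates. Multiplying $g$ by a suitable unit $\varepsilon$ therefore gives a generator $g'=\varepsilon g$ of the same ideal with $\bigl|\log|\sigma_k(g')|-\tfrac1{d_m}\log D_m\bigr|\leq c_m$ for every embedding $\sigma_k$. Hence every conjugate satisfies $|\sigma_k(g')|\geq e^{-c_m}D_m^{1/d_m}$. Consequently the ideal lattice $g'A_m$, embedded in $\R^{d_m}$, is uniformly large in every direction. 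More precisely, its dual lattice, which is $(g')^{-1}$ times the codifferent $A_m^\vee$, has covering radius at most $C_m D_m^{-1/d_m}$ in the sup norm on conjugates, where $C_m$ depends only on $m$.

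\textbf{Approximation.} Write each character of $A_m$ as $x\mapsto\operatorname{Tr}(\theta x)\bmod 1$ for some $\theta\in A_m\otimes\R$; characters of $A_m$ correspond to the torus $(A_m\otimes\R)/A_m^\vee$. The Eisenstein character $\psi_0$ has components $(\lambda_0,\mu_0)$, represented by $\theta_0,\theta_1$. A character that vanishes on $g'A_m$ corresponds to $\theta\in(g')^{-1}A_m^\vee$. By the covering-radius bound I can pick $\theta_0',\theta_1'$ in that lattice with $\|\sigma_k(\theta_i'-\theta_i)\|\leq C_mD_m^{-1/d_m}$ for all $k$. Each root $\zeta_m^j=a_j+b_j\zeta_m$ has fixed coordinates $a_j,b_j\in A_m$, and there are only finitely many. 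The change in its image is therefore $\operatorname{Tr}((\theta_0'-\theta_0)a_j)+\operatorname{Tr}((\theta_1'-\theta_1)b_j)$, which is bounded by $C_mD_m^{-1/d_m}\sum_k(|\sigma_k a_j|+|\sigma_k b_j|)$. Choosing $B_m$ so large that this is at most $1/12$ for all $j$ finishes the proof.

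The main obstacle is the balancing step, specifically making the covering-radius estimate for the dual ideal lattice uniform in the generator. This requires tracking that multiplication by $g'^{-1}$ scales the $k$-th coordinate by $|\sigma_k(g')|^{-1}$, so the covering radius of $A_m^\vee$ (a constant) gets multiplied by at most $\max_k|\sigma_k(g')|^{-1}$. Balancing is exactly what makes this maximum $O_m(D_m^{-1/d_m})$. The case $d_m=1$ ($m=3,4,6$) is trivial, since there are no units to use and $D_m=|g|$ directly.
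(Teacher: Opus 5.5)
Your proposal is correct and follows essentially the same route as the paper: use a Dirichlet unit to replace $F(t_m)$ by an associate whose conjugates are all at least a constant times $D_m^{1/d_m}$, then approximate the Eisenstein character by one vanishing on the ideal, using the covering radius of the correspondingly scaled dual lattice. The difference is only bookkeeping. You split $\mathcal O_m=\Z[t_m]\oplus\Z[t_m]\zeta_m$ and work with the codifferent of the real ring, bounding each root image through its fixed coordinates. The paper instead embeds all of $\mathcal O_m$ in $\C^{d_m}$ with a real inner product and bounds the root images by Cauchy--Schwarz.
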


\begin{proof}
	Fix $m$ and put $K=\Q(t_m)$, $\beta=F(t_m)$, and $D=D_m$.
	The Eisenstein construction in Subsection~\ref{subsec:cyclotomic-quotients} gives a character $\psi_0:\mathcal O_m\to\T$ with $\|\psi_0(\zeta_m^j)\|_{\T}=1/3$ for every $j$.

	First we choose a generator of the same ideal whose conjugates are all large.
	Let $\sigma_1,\ldots,\sigma_{d_m}$ be the real embeddings of $K$.
	Dirichlet's unit theorem says that the vectors $(\log|\sigma_i(u)|)_{i=1}^{d_m}$, for units $u\in\Z[t_m]^\times$, form a full lattice in the hyperplane with coordinate sum zero~\cite[Chapter~8]{SteinANT}.
	The vector $(\log|\sigma_i(\beta)|-\log D/d_m)_{i=1}^{d_m}$ lies in this hyperplane.
	Reducing it into a fixed bounded fundamental region by a unit therefore gives an associate $\beta'$ and a constant $c_m>0$, independent of $\beta$, such that
	\[|\sigma_i(\beta')|\geq c_mD^{1/d_m}\]
	for $1\leq i\leq d_m$.
	Since $\beta'\mathcal O_m=\beta\mathcal O_m$, we may use $\beta'$ to construct a character of the same quotient.

	Next we approximate $\psi_0$ by a character that vanishes on $\beta'\mathcal O_m$.
	Extend each $\sigma_i$ to one complex embedding of $\Q(\zeta_m)$ and identify $\mathcal O_m$ with its image lattice $\Lambda$ in $V=\C^{d_m}$.
	Regard $V$ as a real Euclidean space with inner product $\langle z,w\rangle=\operatorname{Re}\sum_i z_i\overline{w_i}$.
	Its dual lattice is $\Lambda^*=\{v\in V:\langle v,z\rangle\in\Z\text{ for all }z\in\Lambda\}$.
	Lifting the values of $\psi_0$ on an integer basis of $\Lambda$ gives a vector $w_0\in V$ with $\psi_0(z)=\langle w_0,z\rangle\pmod1$.
	A vector $w$ defines a character vanishing on $\beta'\Lambda$ when $\beta'w$ belongs to the dual lattice.
	We therefore approximate $\beta'w_0$ by a dual-lattice vector.
	Since $\Lambda^*$ has a covering radius depending only on $m$, choose $v\in\Lambda^*$ within that distance of $\beta'w_0$ and set $w=v/\beta'$.
	Here multiplication and division by $\beta'$ act coordinatewise through its real embeddings, so
	\[\langle w,\beta'z\rangle=\langle v,z\rangle\in\Z\]
	for every $z\in\Lambda$.
	Thus $\psi(z)=\langle w,z\rangle\pmod1$ descends to $R_m(f)$.

	Dividing the bounded approximation error by the conjugates of $\beta'$ bounds $\|w-w_0\|$ by a constant depending only on $m$ times $D^{-1/d_m}$.
	Every root vector has norm $\sqrt{d_m}$, so Cauchy--Schwarz and the triangle inequality on $\T$ give
	\[\|\psi([\zeta_m^j])\|_{\T}\geq\frac13-C_mD^{-1/d_m}\]
	for a constant $C_m>0$ depending only on $m$.
	For $D>B_m$ with $B_m\geq1$ sufficiently large, this separation is at least $1/4$, and Lemma~\ref{lem:character-separation} gives the claimed coloring bounds.
\end{proof}

The argument gives an existence theorem at each fixed order.
A numerical screening cutoff would also require bounds for the constants coming from the unit lattice and the dual lattice.
The real degree enters the approximation through the exponent $1/d_m=2/\varphi(m)$.
For a fixed order, the theorem leaves only finitely many principal real ideals of bounded norm as possible exceptions.

The eighth-root example above is part of a stronger result: at this order, we can color every nontrivial quotient explicitly and improve the circular bound in most cases.
Polynomial division gives $F(y)\equiv A+By\pmod{y^2-2}$, with real norm $D_8=|A^2-2B^2|$.

For eighth roots, the real evaluation point is $t_8=\sqrt2$, and $\mathcal O_8$ has integer basis $1,\zeta_8,\zeta_8^2,\zeta_8^3$.
The allowed directions are these basis elements and their negatives, so $\Gamma_8(f)$ is a finite quotient of the four-dimensional integer grid.
A character with only two step sizes gives a stronger bound than four colors for almost every nontrivial norm.

The earlier degree-ten example shows how a unit can simplify the evaluated relation without changing the quotient.
Since $-(1+\sqrt2)$ is a unit in $\Z[\sqrt2]$, the identity
\[-(1+\sqrt2)(1-2\sqrt2)=3+\sqrt2\]
lets us use $3+\sqrt2$ in place of $1-2\sqrt2$.
Its coefficients $a=3$ and $b=1$ satisfy $a\geq2b\geq0$, and its norm is $a^2-2b^2=7$.
The construction below will recover the four basis labels $5,4,4,5$ used in that example.

\begin{lemma}[Eighth-root norm criterion]
	\label{lem:eighth-root-norm}
	Suppose $f(1)$ is odd, and write $F(y)\equiv A+By\pmod{y^2-2}$ and $D=|A^2-2B^2|$.
	If $D>1$, then
	\[\chi_c(\Gamma_\alpha)\leq
	\begin{cases}
	7/2,&D=7,\\
	3,&D\ne7.
	\end{cases}\]
	In particular, four colors suffice whenever $D>1$.
\end{lemma}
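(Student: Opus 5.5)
The plan is to build an explicit character of $R_8(f)=\mathcal O_8/F(\sqrt2)\mathcal O_8$ whose values on the root directions are far from zero, and then apply Lemma~\ref{lem:character-separation}. Put $\beta=A+B\sqrt2$, so that $R_8(f)=\mathcal O_8/\beta\mathcal O_8$.

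\emph{Normalizing the generator.} I first replace $\beta$ by a convenient associate, as in the degree-ten example. Multiplying by powers of the unit $1+\sqrt2$ and by $-1$ does not change the ideal. If $b<0$, I apply the automorphism $\zeta_8\mapsto\zeta_8^3$ of $\mathcal O_8$. It sends $\sqrt2\mapsto-\sqrt2$ and permutes $\{\pm\zeta_8^j\}$, so it induces an isomorphism of the target graphs $\Gamma_8(f)$. The aim is an associate $a+b\sqrt2$ with $a\geq2b\geq0$ and $a^2-2b^2=\pm D$. This is the usual reduction: multiplying by $(1+\sqrt2)^{\pm1}$ decreases $|a|$ until the pair is reduced, and I will check that this terminates in the stated region.

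\emph{A two-parameter family of characters.} Using $\sqrt2=\zeta-\zeta^3$ and $\zeta^4=-1$, where $\zeta=\zeta_8$, I write $\beta\zeta^j=a\zeta^j+b(\zeta^{j+1}-\zeta^{j+3})$. A character is determined by its labels $u_0,\dots,u_3\in\T$ on the integral basis $1,\zeta,\zeta^2,\zeta^3$, with $u_{j+4}=-u_j$. It vanishes on $\beta\mathcal O_8$ exactly when $au_j+b(u_{j+1}-u_{j+3})=0$ for $0\leq j\leq3$.

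I will look for symmetric labels $(u_0,u_1,u_2,u_3)=(x,y,y,x)/D$, following the example's $(5,4,4,5)/7$. The four conditions then collapse to
\[(a-b)x+by\equiv0,\qquad bx+(a+b)y\equiv0\pmod D,\]
a system whose determinant is $a^2-2b^2=\pm D$. It has the solutions $(x,y)=k\bigl(a+b,\,-b\bigr)$ for every $k\in\Z/D\Z$; for $a=3$, $b=1$, $k=3$ this gives $(5,4)$ modulo $7$. The step images are $\pm x/D$ and $\pm y/D$. Lemma~\ref{lem:character-separation} therefore gives $\chi_c\leq3$ once I find a $k$ with both $kx$ and $ky$ in $[D/3,2D/3]$ modulo $D$. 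If this symmetric family fails, I will fall back on the antisymmetric ansatz $(x,y,-y,-x)$ and on general solutions of the $4\times4$ system.

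\emph{Choosing the multiplier, the main obstacle.} Let $r\equiv-(a+b)b^{-1}$ when $b$ is invertible modulo $D$; the cases $b=0$ and $\gcd(b,D)>1$ need separate, easier treatment, since one coordinate then degenerates. I need $k$ with $\|k/D\|\geq1/3$ and $\|rk/D\|\geq1/3$. This is a lonely-runner or Dirichlet-type problem for two speeds. I would try the following, in order.
\begin{enumerate}
\item Use the explicit structure $a\geq2b$ to produce $k$ directly. Natural first guesses are $k\approx D/(a+b)$ or $k$ near $\lfloor D/3\rfloor$ adjusted by $b$. I will check both inequalities using $D=a^2-2b^2$.
\item When $D$ is small, or $a$ and $b$ lie in small exceptional families, check by hand or by a short finite computation.
\end{enumerate}
In particular $D=7$, with $(a,b)=(3,1)$, is expected to be the unique case in which no $k$ works: the best available separation is $2/7$, which yields $7/2$. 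To confirm that $7$ is exceptional and not merely unsolved by this ansatz, I would enumerate all characters of the $49$-element quotient.

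The hypothesis that $f(1)$ is odd serves only to place us in the non-bipartite situation, since Lemma~\ref{lem:bipartite-criterion} handles the other case. The possibility $2\mid D$ also needs a check: there I expect the quotient to admit a map to $C_q$ with odd $q\mid D$, or else to reduce to the previous case. Once every case gives separation at least $1/3$, or $2/7$ when $D=7$, Lemma~\ref{lem:character-separation} gives the stated circular bounds, and in every case at most four ordinary colors.
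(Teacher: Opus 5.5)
Your setup matches the paper's. You reduce the generator to $a+b\sqrt2$ with $a\geq2b\geq0$ and $a^2-2b^2=D$; the paper uses $\zeta\mapsto-\zeta$ where you use $\zeta\mapsto\zeta^3$, and both work. You put symmetric labels $(x,y,y,x)$ on $1,\zeta,\zeta^2,\zeta^3$ modulo $D$, and reduce the quotient condition to $(a-b)x+by\equiv0$ and $bx+(a+b)y\equiv0\pmod D$.

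\textbf{The gap.} The step you call the main obstacle is the heart of the lemma, and you leave it unresolved. You produce no admissible multiplier and prove no separation bound. Your first guess $k\approx D/(a+b)$ actually puts $x=k(a+b)$ within $a+b$ of a multiple of $D$, so near $0$ on the circle.

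\textbf{The missing idea.} The pair $(x,y)=(a,a-2b)$ solves both congruences over $\Z$: each left side equals exactly $a^2-2b^2=D$. Its entries lie in $[0,a]$, close to $0$ relative to $D$. Multiplying by the inverse of $2$ modulo $D$ moves them just past $1/2$. Concretely, take $v=(D+a)/2$ and $w=v-b$.
\begin{itemize}
\item The two left sides become $(a+1)D/2$ and $(a+2b+1)D/2$, both multiples of $D$.
\item We have $1/2\leq w/D\leq v/D=(1+a/D)/2$, so every signed step lies at circular distance at least $(1-a/D)/2$ from $0$.
\item To bound $a/D$: since $a$ is odd and $a\geq2b$, we get $b\leq(a-1)/2$. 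Hence $D\geq(a^2+2a-1)/2\geq3a$ for $a\geq5$. The case $a=1$ forces $D=1$, and $a=3$ gives $D\in\{9,7\}$.
\end{itemize}
So $a/D\leq1/3$ except when $D=7$, which yields separation $2/7$.

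\textbf{Where the hypothesis enters.} Oddness of $f(1)$ is not only there to exclude the bipartite case. Since $A\equiv F(2)=f(1)\pmod 2$, it makes $A$ odd. Then $D$ is odd, and every associate $a+b\sqrt2$ has $a$ odd. That is exactly what makes $(D+a)/2$ an integer and gives $b\leq(a-1)/2$. It also makes your planned treatment of $2\mid D$ unnecessary.

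\textbf{Smaller points.} Your family $k(a+b,-b)$ is the full solution set only when $\gcd(a,b)=1$, and it collapses to $y=0$ when $b=0$; that is why you anticipated separate cases. The explicit pair $(v,w)$ covers these uniformly. For $b=0$ it gives $v=w=(a^2+a)/2$, at distance $(a-1)/(2a)\geq1/3$ from $0$ after dividing by $D=a^2$. Finally, you do not need to check that $D=7$ is genuinely exceptional, since the lemma asserts only upper bounds.
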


\begin{proof}
	Write $\zeta=\zeta_8$ and $\beta=A+B\sqrt2$.
	Since $A\equiv F(2)=f(1)\pmod2$, the coefficient $A$ is odd.
	Multiplying $\beta$ by a unit of $\Z[\sqrt2]$ leaves its ideal unchanged.
	The automorphism $\zeta\mapsto-\zeta$ sends $\sqrt2$ to $-\sqrt2$ and permutes the allowed steps, so replacing $\beta$ by its real conjugate gives an isomorphic quotient graph.

	We first use these operations to arrange the coefficients as in the example.
	The unit $1+\sqrt2$ has norm $-1$, so multiplication by it, if needed, makes the norm positive; an overall change of sign then makes both real embeddings positive.
	Multiplication by $3+2\sqrt2$ scales their ratio by $(3+2\sqrt2)^2$, while conjugation inverts the ratio.
	A suitable power of this unit, followed if necessary by conjugation, therefore gives a generator $a+b\sqrt2$ with
	\[1\leq\frac{a+b\sqrt2}{a-b\sqrt2}\leq3+2\sqrt2.\]
	Its norm is $a^2-2b^2=D$.
	The ratio bounds are equivalent to $a\geq2b\geq0$, and the coefficient $a$ remains odd under these operations.

	Now put $v=(D+a)/2$ and $w=v-b$, which are integers because $D$ and $a$ are odd.
	Assign the basis $1,\zeta,\zeta^2,\zeta^3$ the labels $v,w,w,v$ modulo $D$, defining an additive map $L:\Z[\zeta]\to\Z/D\Z$.
	To check that $L$ respects the quotient relation, it suffices to check the four multiples $(a+b\sqrt2)\zeta^j$ for $0\leq j<4$.
	Using $\sqrt2=\zeta-\zeta^3$ and $\zeta^4=-1$, their images reduce to the two expressions
	\begin{align*}
		(a-b)v+bw&=\frac{(a+1)D}{2},\\
		bv+(a+b)w&=\frac{(a+2b+1)D}{2}.
	\end{align*}
	Both are integer multiples of $D$, so $L$ descends to the normalized quotient and sends every allowed step into $\{\pm v,\pm w\}$.

	It remains to bound the separation of these labels.
	Since $a$ is odd, the inequality $a\geq2b$ gives $b\leq(a-1)/2$.
	For $a\geq5$, this implies
	\[D=a^2-2b^2\geq\frac{a^2+2a-1}{2}\geq3a.\]
	The case $a=1$ gives $D=1$, which is excluded.
	For $a=3$, the possibilities $b=0,1$ give $D=9,7$, respectively.
	Thus $a/D\leq1/3$ unless $D=7$, when $a/D=3/7$.
	The character $j\mapsto j/D\pmod1$ sends the two labels to
	\[\frac12\leq\frac wD\leq\frac vD=\frac{1+a/D}{2}<1.\]
	Every signed step therefore has circular distance at least $(1-a/D)/2$ from zero: at least $1/3$ when $D\ne7$, and $2/7$ when $D=7$.
	In particular, the step images are nonzero, giving a graph homomorphism
	\[\Gamma_8(f)\longrightarrow\Cay\bigl(\Z/D\Z,\{\pm v,\pm w\}\bigr).\]
	Pulling back along $\Gamma_\alpha\to\Gamma_8(f)$ and applying Lemma~\ref{lem:character-separation} proves the bounds.
\end{proof}

When $f(1)$ is even, the bipartite criterion already gives two colors.
When $D_8=1$, the quotient has one element and supplies no coloring by evaluation into that quotient.
The eighth-root criterion applies to trace polynomials of arbitrary degree and leading coefficient.

The general criterion separates two tasks: coloring the quotient at a fixed order and choosing an order for a prescribed minimal polynomial.
For each fixed $m$, sufficiently large $D_m$ guarantees a four-colorable target, and the eighth-root construction also settles its small nontrivial norms.
The constants in the general theorem depend on $m$, so finding a successful order for every noncyclotomic algebraic direction parameter remains open.
The following sections apply these evaluation and character constructions to the different algebraic types of direction parameter.

%% file: tikz-code/ck-three-color.tex
\begin{tikzpicture}[
	scale=1.4,
	line cap=round,
	line join=round,
	step label/.style={
		font=\scriptsize,
		fill=white,
		inner sep=1pt
	}
]
	\definecolor{googleRed}{HTML}{EA4335}
	\definecolor{googleBlue}{HTML}{4285F4}
	\definecolor{googleGreen}{HTML}{34A853}

	\pgfmathsetmacro{\ay}{sqrt(39)/8}
	\pgfmathsetmacro{\aay}{5*sqrt(39)/32}

	\coordinate (P0) at (0,0);
	\coordinate (P1) at (0.21875,-\aay);
	\coordinate (P2) at (1.21875,-\aay);
	\coordinate (P3) at (2.21875,-\aay);
	\coordinate (P4) at (2.84375,{\ay-\aay});
	\coordinate (P5) at (3.84375,{\ay-\aay});
	\coordinate (P6) at (4.84375,{\ay-\aay});

	\foreach \i/\steptext/\labelcode in {
		1/-\alpha^2/2,
		2/1/1,
		3/1/1,
		4/\alpha/1,
		5/1/1,
		6/1/1%
	} {
		\pgfmathtruncatemacro{\previous}{\i-1}
		\ifcase\labelcode\relax
			\tikzset{edge position/.style={above=3pt}}
		\or
			\tikzset{edge position/.style={below=3pt}}
		\or
			\tikzset{edge position/.style={left=3pt}}
		\or
			\tikzset{edge position/.style={right=3pt}}
		\fi
		\ifnum\i=4\relax
			\tikzset{edge position/.style={below=3pt,xshift=5pt}}
		\fi
		\draw[thick] (P\previous) -- (P\i)
			node[midway,step label,edge position] {$\steptext$};
	}

	\coordinate (Q0) at (0,0);
	\coordinate (Q1) at (0.625,\ay);
	\coordinate (Q2) at (0.84375,{\ay-\aay});
	\coordinate (Q3) at (1.46875,{2*\ay-\aay});
	\coordinate (Q4) at (2.09375,{3*\ay-\aay});
	\coordinate (Q5) at (2.3125,{3*\ay-2*\aay});
	\coordinate (Q6) at (2.9375,{4*\ay-2*\aay});
	\coordinate (Q7) at (3.15625,{4*\ay-3*\aay});
	\coordinate (Q8) at (3.78125,{5*\ay-3*\aay});
	\coordinate (Q9) at (4,{5*\ay-4*\aay});
	\coordinate (Q10) at (4.625,{6*\ay-4*\aay});
	\coordinate (Q11) at (P6);

	\foreach \i/\steptext/\labelcode in {
		1/\alpha/0,
		2/-\alpha^2/2,
		3/\alpha/0,
		4/\alpha/0,
		5/-\alpha^2/3,
		6/\alpha/0,
		7/-\alpha^2/3,
		8/\alpha/0,
		9/-\alpha^2/3,
		10/\alpha/0,
		11/-\alpha^2/3%
	} {
		\pgfmathtruncatemacro{\previous}{\i-1}
		\ifcase\labelcode\relax
			\tikzset{edge position/.style={above=5pt}}
		\or
			\tikzset{edge position/.style={below=5pt}}
		\or
			\tikzset{edge position/.style={left=5pt}}
		\or
			\tikzset{edge position/.style={right=5pt}}
		\fi
		\ifnum\labelcode>1\relax
			\tikzset{edge position/.style={right=-20pt,yshift=-5pt}}
		\fi
		\draw[thick,dashed] (Q\previous) -- (Q\i)
			node[midway,step label,edge position] {$\steptext$};
	}

	\foreach \vertexname in {P0,P2,P5,Q2} {
		\filldraw[
			fill=googleRed,
			draw=white,
			line width=0.5pt
		] (\vertexname) circle[radius=2.5pt];
	}

	\foreach \vertexname in {P3,P6,Q1,Q3,Q5,Q7,Q9} {
		\filldraw[
			fill=googleBlue,
			draw=white,
			line width=0.5pt
		] (\vertexname) circle[radius=2.5pt];
	}

	\foreach \vertexname in {P1,P4,Q4,Q6,Q8,Q10} {
		\filldraw[
			fill=googleGreen,
			draw=white,
			line width=0.5pt
		] (\vertexname) circle[radius=2.5pt];
	}

	\node[left=4pt] at (P0) {$0$};
	\node[below=10pt] at (P6) {$p(\alpha)=q(\alpha)$};

	\begin{scope}[
		shift={(5.7,-0.75)},
		scale=0.5,
		transform shape
	]
		\draw[rounded corners=3pt] (0,0) rectangle (4.1,2.7);

		\fill[googleRed] (0.35,2.35) circle[radius=2.5pt];
		\node[right,font=\small] at (0.7,2.35) {color $0$};

		\fill[googleBlue] (0.35,1.9) circle[radius=2.5pt];
		\node[right,font=\small] at (0.7,1.9) {color $1$};

		\fill[googleGreen] (0.35,1.45) circle[radius=2.5pt];
		\node[right,font=\small] at (0.7,1.45) {color $2$};

		\draw[thick] (0.15,0.9) -- (0.6,0.9);
		\node[right,font=\small] at (0.7,0.9)
			{$p(x)=4+x-x^2$};

		\draw[thick,dashed] (0.15,0.35) -- (0.6,0.35);
		\node[right,font=\small] at (0.7,0.35)
			{$q(x)=6x-5x^2$};
	\end{scope}
\end{tikzpicture}

%% file: tikz-code/ck-circular-color.tex
\begin{tikzpicture}[
	scale=1.05,
	line cap=round,
	line join=round,
	edge label/.style={
		font=\scriptsize,
		fill=white,
		inner sep=2pt
	},
	vertex label/.style={
		font=\scriptsize,
		fill=white,
		inner sep=1.5pt
	}
]
	\definecolor{googleRed}{HTML}{EA4335}
	\definecolor{googleBlue}{HTML}{4285F4}
	\definecolor{googleGreen}{HTML}{34A853}

	\pgfmathsetmacro{\ay}{sqrt(39)/8}
	\pgfmathsetmacro{\aay}{5*sqrt(39)/32}

	\foreach \j in {0,...,4} {
		\coordinate (V\j) at (\j,0);
	}
	\foreach \j in {5,...,9} {
		\coordinate (V\j) at
			({4-(\j-4)*5/8},{-(\j-4)*\ay});
	}
	\foreach \j in {10,...,12} {
		\coordinate (V\j) at
			({7/8-(\j-9)*7/32},{-5*\ay+(\j-9)*\aay});
	}
	\coordinate (V13) at (V0);

	\foreach \j in {0,...,3} {
		\pgfmathtruncatemacro{\nextvertex}{\j+1}
		\draw[thick] (V\j) -- (V\nextvertex)
			node[midway,above=5pt,edge label] {$1$};
	}
	\foreach \j in {4,...,8} {
		\pgfmathtruncatemacro{\nextvertex}{\j+1}
		\draw[thick] (V\j) -- (V\nextvertex)
			node[midway,right=4pt,edge label] {$-\alpha$};
	}
	\foreach \j in {9,...,12} {
		\pgfmathtruncatemacro{\nextvertex}{\j+1}
		\draw[thick] (V\j) -- (V\nextvertex)
			node[midway,left=4pt,edge label] {$\alpha^2$};
	}

	\foreach \j in {0,...,12} {
		\pgfmathtruncatemacro{\residue}{mod(6*\j,13)}

		\ifnum\residue<6\relax
			\def\pointcolor{googleRed}
		\else
			\ifnum\residue<12\relax
				\def\pointcolor{googleBlue}
			\else
				\def\pointcolor{googleGreen}
			\fi
		\fi

		\filldraw[
			fill=\pointcolor,
			draw=white,
			line width=0.5pt
		] (V\j) circle[radius=2.5pt];

		\ifnum\j=0\relax
			\node[vertex label,above left=4pt] at (V\j) {$0$};
		\else
			\ifnum\j=4\relax
				\node[vertex label,above right=4pt] at (V\j)
					{$\frac{\residue}{13}$};
			\else
				\ifnum\j=9\relax
					\node[vertex label,below=5pt] at (V\j)
						{$\frac{\residue}{13}$};
				\else
					\ifnum\j<4\relax
						\node[vertex label,below=6pt] at (V\j)
							{$\frac{\residue}{13}$};
					\else
						\ifnum\j<9\relax
							\node[vertex label,right=4pt] at (V\j)
								{$\frac{\residue}{13}$};
						\else
							\node[vertex label,right=4pt] at (V\j)
								{$\frac{\residue}{13}$};
						\fi
					\fi
				\fi
			\fi
		\fi
	}

	\draw[->,thick] (4.75,-1.8) -- (5.55,-1.8)
		node[midway,above=4pt] {$\psi$};

	\begin{scope}[shift={(8,-1.8)}]
		\def\circleradius{1.75}
		\def\labelradius{2.12}
		\pgfmathsetmacro{\redend}{90+360*6/13}
		\pgfmathsetmacro{\blueend}{90+360*12/13}

		\draw[googleRed!35,line width=8pt]
			(90:\circleradius)
			arc[start angle=90,end angle=\redend,radius=\circleradius];
		\draw[googleBlue!35,line width=8pt]
			(\redend:\circleradius)
			arc[start angle=\redend,end angle=\blueend,radius=\circleradius];
		\draw[googleGreen!40,line width=8pt]
			(\blueend:\circleradius)
			arc[start angle=\blueend,end angle=450,radius=\circleradius];

		\foreach \k in {0,...,12} {
			\pgfmathsetmacro{\pointangle}{90+360*\k/13}
			\coordinate (C\k) at (\pointangle:\circleradius);
		}

		\foreach \j in {0,...,12} {
			\pgfmathtruncatemacro{\fromresidue}{mod(6*\j,13)}
			\pgfmathtruncatemacro{\toresidue}{mod(6*(\j+1),13)}
			\draw[gray!60,thin] (C\fromresidue) -- (C\toresidue);
		}

		\draw[->,thick] (C0) -- (C6);

		\foreach \k in {0,...,12} {
			\pgfmathsetmacro{\pointangle}{90+360*\k/13}

			\ifnum\k<6\relax
				\def\pointcolor{googleRed}
			\else
				\ifnum\k<12\relax
					\def\pointcolor{googleBlue}
				\else
					\def\pointcolor{googleGreen}
				\fi
			\fi

			\filldraw[
				fill=\pointcolor,
				draw=white,
				line width=0.5pt
			] (C\k) circle[radius=2.5pt];

			\ifnum\k=0\relax
				\node[vertex label] at (\pointangle:\labelradius) {$0$};
			\else
				\node[vertex label] at (\pointangle:\labelradius)
					{$\frac{\k}{13}$};
			\fi
		}
	\end{scope}

	\begin{scope}[shift={(0.1,-4.7)}]
		\draw[googleRed,line width=5pt] (0,0) -- (0.35,0);
		\node[right,font=\small] at (0.5,0)
			{$[0,6/13)$};

		\draw[googleBlue,line width=5pt] (3.2,0) -- (3.55,0);
		\node[right,font=\small] at (3.7,0)
			{$[6/13,12/13)$};

		\draw[googleGreen,line width=5pt] (6.9,0) -- (7.25,0);
		\node[right,font=\small] at (7.4,0)
			{$[12/13,1)$};
	\end{scope}
\end{tikzpicture}

%% file: tikz-code/character-rounding.tex
\begin{tikzpicture}[
    line cap=round,
    line join=round,
    font=\small,
    rounding tick/.style={draw=black!45,line width=0.45pt},
    graph edge/.style={draw=black,line width=0.55pt}
]
    \definecolor{googleRed}{HTML}{EA4335}
    \definecolor{googleBlue}{HTML}{4285F4}
    \definecolor{googleGreen}{HTML}{34A853}
    \definecolor{googleYellow}{HTML}{FBBC05}

    \def\circleradius{1.40}
    \def\targetx{7.20}

    \foreach \centerx in {0,\targetx} {
        \begin{scope}[shift={(\centerx,0)}]
            \foreach \start/\stop/\bandcolor in {
                0/2/googleRed,2/4/googleBlue,
                4/6/googleGreen,6/7/googleYellow%
            } {
                \pgfmathsetmacro{\arcstart}{90+360*\start/7}
                \pgfmathsetmacro{\arcend}{90+360*\stop/7}
                \draw[\bandcolor!35,line width=8pt,line cap=butt]
                    (\arcstart:\circleradius)
                    arc[start angle=\arcstart,end angle=\arcend,
                        radius=\circleradius];
            }
        \end{scope}
    }
    \foreach \j in {0,...,6} {
        \pgfmathsetmacro{\cutangle}{90+360*\j/7}
        \draw[rounding tick] (\cutangle:1.31) -- (\cutangle:1.49);
    }
    \node at (0,2.26) {$K_{10/3}$};

    \foreach \j in {0,...,9} {
        \pgfmathsetmacro{\pointangle}{90+360*\j/10}
        \coordinate (source\j) at (\pointangle:\circleradius);
    }
    \foreach \i in {0,...,9} {
        \foreach \j in {0,...,9} {
            \ifnum\i<\j\relax
                \pgfmathtruncatemacro{\difference}{\j-\i}
                \ifnum\difference>2\relax
                    \ifnum\difference<8\relax
                        \draw[graph edge] (source\i) -- (source\j);
                    \fi
                \fi
            \fi
        }
    }
    \foreach \j in {0,...,9} {
        \pgfmathtruncatemacro{\residue}{floor(7*\j/10)}
        \pgfmathtruncatemacro{\colorindex}{floor(\residue/2)}
        \ifcase\colorindex\relax
            \def\pointcolor{googleRed}
        \or
            \def\pointcolor{googleBlue}
        \or
            \def\pointcolor{googleGreen}
        \or
            \def\pointcolor{googleYellow}
        \fi
        \filldraw[fill=\pointcolor,draw=white,line width=0.55pt]
            (source\j) circle[radius=3.2pt];
        \pgfmathsetmacro{\pointangle}{90+360*\j/10}
        \node[font=\scriptsize,text=black!80]
            at (\pointangle:1.82) {$\j/10$};
    }
    \node[font=\footnotesize] at (0,-2.20) {separation $3/10$};

    \pgfmathsetmacro{\arrowend}{\targetx-2.15}
    \pgfmathsetmacro{\arrowmid}{(2.18+\arrowend)/2}
    \draw[->,black!70,line width=0.65pt]
        (2.18,0) -- (\arrowend,0)
        node[midway,above=6pt] {$t\mapsto\lfloor7t\rfloor$};
    \node[font=\footnotesize,text=black!65] at (\arrowmid,-0.40)
        {$3/10\geq2/7$};

    \begin{scope}[shift={(\arrowmid,-1.03)}]
        \foreach \row/\bandcolor/\interval in {
            0/googleRed/{[0,2/7)},
            1/googleBlue/{[2/7,4/7)},
            2/googleGreen/{[4/7,6/7)},
            3/googleYellow/{[6/7,1)}%
        } {
            \pgfmathsetmacro{\legendy}{-0.39*\row}
            \draw[\bandcolor,line width=5pt]
                (-0.93,\legendy) -- (-0.65,\legendy);
            \node[anchor=west,font=\footnotesize] at (-0.49,\legendy)
                {$\interval$};
        }
    \end{scope}

    \begin{scope}[shift={(\targetx,0)}]
        \foreach \j in {0,...,6} {
            \pgfmathsetmacro{\vertexangle}{90+360*\j/7}
            \coordinate (target\j) at (\vertexangle:\circleradius);
        }
        \foreach \j in {0,...,6} {
            \foreach \step in {2,3} {
                \pgfmathtruncatemacro{\neighbor}{mod(\j+\step,7)}
                \draw[graph edge] (target\j) -- (target\neighbor);
            }
        }
        \foreach \j in {0,...,6} {
            \pgfmathtruncatemacro{\colorindex}{floor(\j/2)}
            \ifcase\colorindex\relax
                \def\vertexcolor{googleRed}
            \or
                \def\vertexcolor{googleBlue}
            \or
                \def\vertexcolor{googleGreen}
            \or
                \def\vertexcolor{googleYellow}
            \fi
            \filldraw[fill=\vertexcolor,draw=white,line width=0.55pt]
                (target\j) circle[radius=3.2pt];
            \pgfmathsetmacro{\vertexangle}{90+360*\j/7}
            \node at (\vertexangle:1.68) {$\j$};
        }
        \node at (0,2.26) {$K_{7/2}$};
        \node[font=\footnotesize] at (0,-2.20) {separation $2/7$};
    \end{scope}
\end{tikzpicture}

%% file: tikz-code/eisenstein-quotient-background.tex
\begin{tikzpicture}[
	scale=1.0,
	line cap=round,
	line join=round,
	vertex label/.style={
		font=\small,
		fill=white,
		inner sep=1.5pt
	}
]
	\definecolor{googleRed}{HTML}{EA4335}
	\definecolor{googleBlue}{HTML}{4285F4}
	\definecolor{googleGreen}{HTML}{34A853}
	\definecolor{googleYellow}{HTML}{FBBC05}

	\pgfmathsetmacro{\height}{sqrt(3)/2}

	\foreach \a in {-1,...,3} {
		\foreach \b in {-1,...,2} {
			\coordinate (E\a-\b) at
				({\a-\b/2},{\b*\height});
		}
	}

	\foreach \a in {-1,...,3} {
		\foreach \b in {-1,...,2} {
			\ifnum\a<3\relax
				\pgfmathtruncatemacro{\nexta}{\a+1}
				\draw[gray!50,thin]
					(E\a-\b) -- (E\nexta-\b);
			\fi
			\ifnum\b<2\relax
				\pgfmathtruncatemacro{\nextb}{\b+1}
				\draw[gray!50,thin]
					(E\a-\b) -- (E\a-\nextb);
				\ifnum\a<3\relax
					\pgfmathtruncatemacro{\nexta}{\a+1}
					\draw[gray!50,thin]
						(E\a-\b) -- (E\nexta-\nextb);
				\fi
			\fi
		}
	}

	\draw[very thick] (E0-0) -- (E1-1);
	\draw[very thick] (E0-1) -- (E1-1);
	\draw[very thick] (E0-0) -- (E0-1);
	\draw[very thick] (E0-0) -- (E1-0);
	\draw[very thick] (E1-0) -- (E1-1);
	\draw[very thick,dashed] (E1-0) -- (E2-1);

	\foreach \a in {-1,...,3} {
		\foreach \b in {-1,...,2} {
			\pgfmathtruncatemacro{\residue}{mod(\a+\b+12,3)}
			\ifcase\residue\relax
				\def\vertexcolor{googleRed}
			\or
				\def\vertexcolor{googleBlue}
			\or
				\def\vertexcolor{googleGreen}
			\fi
			\filldraw[
				fill=\vertexcolor,
				draw=white,
				line width=0.5pt
			] (E\a-\b) circle[radius=3pt];
		}
	}

	\node[vertex label,below left=4pt] at (E0-0) {$0$};
	\node[vertex label,below=5pt] at (E1-0) {$1$};
	\node[vertex label,below right=4pt] at (E2-0) {$2$};
	\node[vertex label,above left=4pt] at (E0-1) {$\omega$};
	\node[vertex label,above=4pt] at (E1-1) {$1+\omega$};

	\node[vertex label,above right=4pt] at (E2-1) {$2+\omega$};

	\draw[
		->,
		thick,
		shorten <=5pt,
		shorten >=5pt
	] (E2-1) to[bend right=70]
		node[
			midway,
			above=4pt,
			font=\scriptsize,
			fill=white,
			inner sep=2pt
		] {identified modulo $2$}
		(E0-1);

	\node[font=\small] at (0.75,2.3)
		{$\Z[\omega]$};
	\node[font=\small] at (0.75,-1.5)
		{$c(a+b\omega)=a+b\pmod3$};

	\draw[->,thick] (4.15,0.4) -- (5.6,0.4)
		node[midway,above=5pt,font=\small]
			{modulo $2\Z[\omega]$};

	\begin{scope}[shift={(7.5,0.4)}]
		\coordinate (Kzero) at (-1,-1);
		\coordinate (Kone) at (1,-1);
		\coordinate (Komega) at (-1,1);
		\coordinate (Ksum) at (1,1);

		\draw[thick]
			(Kzero) -- (Kone) -- (Ksum) -- (Komega) -- cycle;

		\draw[very thick] (Kzero) -- (Ksum);
		\draw[very thick,dashed] (Kone) -- (Komega);

		\filldraw[
			fill=googleRed,
			draw=white,
			line width=0.5pt
		] (Kzero) circle[radius=3.5pt];

		\filldraw[
			fill=googleBlue,
			draw=white,
			line width=0.5pt
		] (Kone) circle[radius=3.5pt];

		\filldraw[
			fill=googleGreen,
			draw=white,
			line width=0.5pt
		] (Komega) circle[radius=3.5pt];

		\filldraw[
			fill=googleYellow,
			draw=white,
			line width=0.5pt
		] (Ksum) circle[radius=3.5pt];

		\node[vertex label,below left=4pt] at (Kzero) {$0$};
		\node[vertex label,below right=4pt] at (Kone) {$1$};
		\node[vertex label,above left=4pt] at (Komega) {$\omega$};
		\node[vertex label,above right=4pt] at (Ksum) {$1+\omega$};

		\node[font=\small] at (0,1.9)
			{$\Z[\omega]/2\Z[\omega]\cong\mathbb F_4$};
		\node[font=\small] at (0,-1.9)
			{Quotient graph: $K_4$};
	\end{scope}
\end{tikzpicture}

%% file: tikz-code/eisenstein-seven.tex
\begin{tikzpicture}[
    scale=0.90,
    line cap=round,
    line join=round,
    font=\small,
    lattice edge/.style={draw=black!42,line width=0.45pt}
]
    \definecolor{googleRed}{HTML}{EA4335}
    \definecolor{googleBlue}{HTML}{4285F4}
    \definecolor{googleGreen}{HTML}{34A853}
    \definecolor{googleYellow}{HTML}{FBBC05}

    \def\q{7}
    \pgfmathtruncatemacro{\N}{\q-1}

    \begin{scope}[shift={(1.98cm,0cm)},x={(0.66cm,0cm)},y={(-0.33cm,0.5715768cm)}]
        \foreach \i in {0,...,\N} {
            \foreach \j in {0,...,\N} {
                \coordinate (V\i-\j) at (\i,\j);
            }
        }

        \foreach \i in {0,...,\N} {
            \foreach \j in {0,...,\N} {
                \ifnum\i<\N\relax
                    \pgfmathtruncatemacro{\nexti}{\i+1}
                    \draw[lattice edge] (V\i-\j) -- (V\nexti-\j);
                \fi
                \ifnum\j<\N\relax
                    \pgfmathtruncatemacro{\nextj}{\j+1}
                    \draw[lattice edge] (V\i-\j) -- (V\i-\nextj);
                    \ifnum\i<\N\relax
                        \pgfmathtruncatemacro{\nexti}{\i+1}
                        \draw[lattice edge] (V\i-\j) -- (V\nexti-\nextj);
                    \fi
                \fi
            }
        }

        \foreach \i in {0,...,\N} {
            \foreach \j in {0,...,\N} {
                \pgfmathtruncatemacro{\residue}{mod(\i+\j,7)}
                \pgfmathtruncatemacro{\colorindex}{ifthenelse(\residue==6,3,mod(\residue,3))}
                \ifcase\colorindex\relax
                    \def\vertexcolor{googleRed}
                \or
                    \def\vertexcolor{googleBlue}
                \or
                    \def\vertexcolor{googleGreen}
                \or
                    \def\vertexcolor{googleYellow}
                \fi
                \filldraw[fill=\vertexcolor,draw=white,line width=0.55pt]
                    (V\i-\j) circle[radius=3.4pt];
            }
        }
    \end{scope}

    \node at (2.97,-0.48) {$E_7$};

    \draw[->,line width=0.65pt] (6.30,1.71473) -- (8.90,1.71473)
        node[midway,above=5pt] {$\sigma_7$}
        node[midway,below=5pt,font=\scriptsize] {$(i,j)\mapsto i+j\pmod7$};

    \begin{scope}[shift={(11.70,1.71473)}]
        \foreach \s in {0,...,6} {
            \pgfmathsetmacro{\angle}{90+360*\s/7}
            \coordinate (C\s) at (\angle:1.65);
        }

        \foreach \s in {0,...,6} {
            \foreach \step in {1,2} {
                \pgfmathtruncatemacro{\target}{mod(\s+\step,7)}
                \draw[lattice edge] (C\s) -- (C\target);
            }
        }

        \foreach \s/\vertexcolor in {
            0/googleRed,1/googleBlue,2/googleGreen,
            3/googleRed,4/googleBlue,5/googleGreen,6/googleYellow%
        } {
            \filldraw[fill=\vertexcolor,draw=white,line width=0.55pt]
                (C\s) circle[radius=4.2pt];
            \pgfmathsetmacro{\angle}{90+360*\s/7}
            \node at (\angle:1.96) {$\s$};
        }
    \end{scope}
    \node at (11.70,-0.48) {$C_7(1,2)$};

\end{tikzpicture}

%% file: tikz-code/eisenstein-five-background.tex
\begin{tikzpicture}[
    scale=1.0,
    line cap=round,
    line join=round,
    font=\small,
    lattice edge/.style={draw=black!42,line width=0.45pt},
    wrap edge/.style={
        draw=black!25,
        line width=0.40pt
    }
]
    \definecolor{googleRed}{HTML}{EA4335}
    \definecolor{googleBlue}{HTML}{4285F4}
    \definecolor{googleGreen}{HTML}{34A853}
    \definecolor{googleYellow}{HTML}{FBBC05}

    \def\q{5}
    \pgfmathtruncatemacro{\N}{\q-1}
    \pgfmathtruncatemacro{\lastinterior}{\N-1}

    \begin{scope}[x={(1cm,0cm)},y={(-0.5cm,0.8660254cm)}]
        \foreach \i in {0,...,\N} {
            \foreach \j in {0,...,\N} {
                \coordinate (V\i-\j) at (\i,\j);
            }
        }

        \foreach \j in {0,...,\N} {
            \draw[wrap edge]
                (V\N-\j) to[bend left=14] (V0-\j);
        }
        \foreach \i in {0,...,\N} {
            \draw[wrap edge]
                (V\i-\N) to[bend right=14] (V\i-0);
        }

        \foreach \j in {0,...,\lastinterior} {
            \pgfmathtruncatemacro{\nextj}{\j+1}
            \draw[wrap edge] (V\N-\j) -- (V0-\nextj);
        }
        \foreach \i in {0,...,\lastinterior} {
            \pgfmathtruncatemacro{\nexti}{\i+1}
            \draw[wrap edge] (V\i-\N) -- (V\nexti-0);
        }

        \def\wrapoffset{0.20}
        \draw[wrap edge]
            (V\N-\N)
            -- ({\N+\wrapoffset/2},{\N-\wrapoffset/2})
            -- ({\wrapoffset/2},{-\wrapoffset/2})
            -- (V0-0);

        \foreach \i in {0,...,\N} {
            \foreach \j in {0,...,\N} {
                \ifnum\i<\N\relax
                    \pgfmathtruncatemacro{\nexti}{\i+1}
                    \draw[lattice edge] (V\i-\j) -- (V\nexti-\j);
                \fi
                \ifnum\j<\N\relax
                    \pgfmathtruncatemacro{\nextj}{\j+1}
                    \draw[lattice edge] (V\i-\j) -- (V\i-\nextj);
                    \ifnum\i<\N\relax
                        \pgfmathtruncatemacro{\nexti}{\i+1}
                        \draw[lattice edge] (V\i-\j) -- (V\nexti-\nextj);
                    \fi
                \fi
            }
        }

        \foreach \i/\j/\colorindex in {
            0/0/0, 1/0/1, 2/0/2, 3/0/3, 4/0/2,
            0/1/1, 1/1/2, 2/1/0, 3/1/1, 4/1/0,
            0/2/2, 1/2/0, 2/2/1, 3/2/2, 4/2/3,
            0/3/0, 1/3/1, 2/3/2, 3/3/0, 4/3/1,
            0/4/2, 1/4/3, 2/4/0, 3/4/1, 4/4/3
        } {
                \ifcase\colorindex\relax
                    \def\vertexcolor{googleRed}
                \or
                    \def\vertexcolor{googleBlue}
                \or
                    \def\vertexcolor{googleGreen}
                \or
                    \def\vertexcolor{googleYellow}
                \fi
                \filldraw[fill=\vertexcolor,draw=white,line width=0.55pt]
                    (V\i-\j) circle[radius=3.4pt];
        }
    \end{scope}

    \node[anchor=west,inner sep=0pt,font=\small] at (4.7,1.75) {
        \begin{tabular}{@{}c|ccccc@{}}
            $j\backslash i$ & $0$ & $1$ & $2$ & $3$ & $4$ \\ \hline
            $4$ & 2 & 3 & 0 & 1 & 3 \\
            $3$ & 0 & 1 & 2 & 0 & 1 \\
            $2$ & 2 & 0 & 1 & 2 & 3 \\
            $1$ & 1 & 2 & 0 & 1 & 0 \\
            $0$ & 0 & 1 & 2 & 3 & 2 \\
        \end{tabular}
    };
\end{tikzpicture}

%% file: tikz-code/ninth-roots-eisenstein.tex
\begin{tikzpicture}[
    line cap=round,
    line join=round,
    font=\small,
    step edge/.style={->,>=stealth,line width=0.9pt,shorten <=2.5pt,shorten >=2.5pt}
]
    \definecolor{googleRed}{HTML}{EA4335}
    \definecolor{googleBlue}{HTML}{4285F4}
    \definecolor{googleGreen}{HTML}{34A853}

    \def\unitlength{1.8}
    \def\labeloffset{0.24}
    \foreach \j/\shiftx/\firststep/\secondstep/\thirdstep in {
        0/0/1/\zeta^3/\zeta^6,
        1/3.1/\zeta/\zeta^4/\zeta^7,
        2/6.5/\zeta^2/\zeta^5/\zeta^8%
    } {
        \begin{scope}[shift={(\shiftx,0)}]
            \pgfmathsetmacro{\angle}{40*\j}
            \coordinate (O\j) at (0,0);
            \coordinate (A\j) at (\angle:\unitlength);
            \coordinate (B\j) at ({\angle+60}:\unitlength);

            \draw[step edge,googleRed] (O\j) -- (A\j);
            \draw[step edge,googleBlue] (A\j) -- (B\j);
            \draw[step edge,googleGreen] (B\j) -- (O\j);

            \node at
                ({0.5*\unitlength*cos(\angle)+\labeloffset*sin(\angle)},
                 {0.5*\unitlength*sin(\angle)-\labeloffset*cos(\angle)})
                {$\firststep$};
            \node at
                ({0.5*\unitlength*(cos(\angle)+cos(\angle+60))+\labeloffset*cos(\angle+30)},
                 {0.5*\unitlength*(sin(\angle)+sin(\angle+60))+\labeloffset*sin(\angle+30)})
                {$\secondstep$};
            \node at
                ({0.5*\unitlength*cos(\angle+60)+\labeloffset*cos(\angle+150)},
                 {0.5*\unitlength*sin(\angle+60)+\labeloffset*sin(\angle+150)})
                {$\thirdstep$};

            \foreach \vertex in {O,A,B} {
                \fill[black!70] (\vertex\j) circle[radius=2pt];
            }
            \node[below left,inner sep=3pt] at (O\j) {$0$};
            \pgfmathsetmacro{\labelx}{0.5*\unitlength*(cos(\angle)+cos(\angle+60))}
            \ifnum\j=0\relax
                \def\labelx{0.9}
            \fi
            \node at (\labelx,-0.62) {$j=\j$};
        \end{scope}
    }

    \draw[->,>=stealth,line width=0.65pt] (7.55,0.85) -- (8.65,0.85)
        node[midway,above=5pt] {$L_9$};

    \begin{scope}[shift={(9.3,0)}]
        \coordinate (T0) at (0,0);
        \coordinate (T1) at (\unitlength,0);
        \coordinate (T2) at (60:\unitlength);

        \draw[step edge,googleRed] (T0) -- (T1)
            node[midway,below=5pt] {$1$};
        \draw[step edge,googleBlue] (T1) -- (T2)
            node[midway,above right=3pt] {$\omega$};
        \draw[step edge,googleGreen] (T2) -- (T0)
            node[midway,above left=3pt] {$\omega^2$};

        \foreach \vertex in {T0,T1,T2} {
            \fill[black!70] (\vertex) circle[radius=2pt];
        }
        \node[below left,inner sep=3pt] at (T0) {$0$};
        \node[below right,inner sep=3pt] at (T1) {$1$};
        \node[above,inner sep=3pt] at (T2) {$1+\omega$};
    \end{scope}
\end{tikzpicture}

%% file: tikz-code/eighth-root-coloring.tex
\begin{tikzpicture}[
    scale=0.94,
    line cap=round,
    line join=round,
    font=\small,
    quotient edge/.style={draw=black!45,line width=0.35pt},
    target edge/.style={draw=black!75,line width=0.60pt},
    quotient vertex/.style={
        circle,draw=white,line width=0.55pt,
        minimum size=10pt,inner sep=0pt,
        font=\scriptsize,text=black
    }
]
    \definecolor{googleRed}{HTML}{EA4335}
    \definecolor{googleBlue}{HTML}{4285F4}
    \definecolor{googleGreen}{HTML}{34A853}
    \definecolor{googleYellow}{HTML}{FBBC05}

    \begin{scope}[x=0.62cm,y=0.62cm]
        \foreach \a in {0,...,6} {
            \foreach \b in {0,...,6} {
                \coordinate (Q\a-\b) at (\a,\b);
            }
        }
        \foreach \a in {0,...,6} {
            \foreach \b in {0,...,6} {
                \foreach \da/\db in {1/0,0/1,1/3,3/1} {
                    \pgfmathtruncatemacro{\nexta}{\a+\da}
                    \pgfmathtruncatemacro{\nextb}{\b+\db}
                    \ifnum\nexta<7\relax
                        \ifnum\nextb<7\relax
                            \draw[quotient edge]
                                (Q\a-\b) -- (Q\nexta-\nextb);
                        \fi
                    \fi
                }
            }
        }

        \foreach \a in {0,...,6} {
            \foreach \b in {0,...,6} {
                \pgfmathtruncatemacro{\residue}{mod(5*\a+4*\b,7)}
                \pgfmathtruncatemacro{\colorindex}{floor(\residue/2)}
                \ifcase\colorindex\relax
                    \def\vertexcolor{googleRed}
                \or
                    \def\vertexcolor{googleBlue}
                \or
                    \def\vertexcolor{googleGreen}
                \or
                    \def\vertexcolor{googleYellow}
                \fi
                \node[quotient vertex,fill=\vertexcolor]
                    at (Q\a-\b) {$\residue$};
            }
        }
    \end{scope}

    \draw[->,black!60,line width=0.4pt]
        (-0.10,-0.36) -- (0.78,-0.36)
        node[right,font=\scriptsize] {$a$};
    \draw[->,black!60,line width=0.4pt]
        (-0.36,-0.10) -- (-0.36,0.78)
        node[above,font=\scriptsize] {$b$};
    \node at (1.86,-0.78) {$\Gamma_8(f)$};

    \draw[->,line width=0.7pt] (4.45,1.86) -- (5.55,1.86)
        node[midway,above=5pt] {$h$};

    \begin{scope}[shift={(8.10,1.86)}]
        \def\circleradius{1.65}
        \def\labelradius{2.00}

        \foreach \first/\last/\intervalcolor in {
            0/2/googleRed,2/4/googleBlue,
            4/6/googleGreen,6/7/googleYellow%
        } {
            \pgfmathsetmacro{\startangle}{90+360*\first/7}
            \pgfmathsetmacro{\endangle}{90+360*\last/7}
            \draw[\intervalcolor!35,line width=8pt,line cap=butt]
                (\startangle:\circleradius)
                arc[start angle=\startangle,end angle=\endangle,
                    radius=\circleradius];
        }

        \foreach \s in {0,...,6} {
            \pgfmathsetmacro{\angle}{90+360*\s/7}
            \coordinate (C\s) at (\angle:\circleradius);
        }

        \foreach \s in {0,...,6} {
            \foreach \step in {2,3} {
                \pgfmathtruncatemacro{\target}{mod(\s+\step,7)}
                \draw[target edge] (C\s) -- (C\target);
            }
        }

        \foreach \s/\vertexcolor in {
            0/googleRed,1/googleRed,2/googleBlue,3/googleBlue,
            4/googleGreen,5/googleGreen,6/googleYellow%
        } {
            \filldraw[fill=\vertexcolor,draw=white,line width=0.55pt]
                (C\s) circle[radius=4.5pt];
            \pgfmathsetmacro{\angle}{90+360*\s/7}
            \node at (\angle:\labelradius) {$\s$};
        }
    \end{scope}
    \node at (8.10,-0.78) {$K_{7/2}$};
\end{tikzpicture}

%% file: 3-cyclotomic.tex

\section{Transcendental and Cyclotomic Parameters}
\label{sec:transcendental-cyclotomic}

The constructions of Section~2 give exact ordinary and circular chromatic numbers for two classes of direction parameters: transcendental numbers and roots of unity.
We begin with the transcendental case, then develop the cyclotomic classification through a twentieth-root example.
The classification leads to a geometric restriction on the edge directions of any four-chromatic unit-distance graph.

\subsection{Transcendental Direction Parameters}
\label{subsec:transcendental}

For a transcendental parameter, the powers of $\alpha$ act as independent coordinate directions.
This gives the familiar parity coloring of an integer lattice.

\begin{theorem}
    \label{thm:transcendental}
    Let $\alpha\in\C$ be transcendental with $|\alpha|=1$.
    Then
    \[\chi(\Gamma_\alpha)=\chi_c(\Gamma_\alpha)=2.\]
\end{theorem}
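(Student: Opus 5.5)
The plan is to prove the upper bound $\chi_c(\Gamma_\alpha)\leq2$ with a parity character, and the matching lower bound from the presence of an edge. Together these give $2\leq\chi(\Gamma_\alpha)$ and $\chi_c(\Gamma_\alpha)\leq\chi(\Gamma_\alpha)$, and the chain $2\leq\chi_c\leq\chi\leq2$ collapses once the upper bound is shown to hold for $\chi$ as well.

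For the upper bound, I would first note that since $\alpha$ is transcendental, the evaluation map $\Z[x]\to\Z[\alpha]$, $p\mapsto p(\alpha)$, is injective: a nonzero kernel element would be a nonzero integer polynomial vanishing at $\alpha$. Hence $\Z[\alpha]\cong\Z[x]$ is free abelian on the powers $1,\alpha,\alpha^2,\ldots$, and every vertex has a unique polynomial expression.

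The map $\psi(p(\alpha))=\tfrac12p(1)\pmod1$ is then a well-defined character $\Z[\alpha]\to\T$. Equivalently, in the language of the Shift Criterion, there are no relations to check, so any assignment $u_n\in\T$ extends; I take $u_n=1/2$ for all $n$. Every allowed step $\pm\alpha^n$ has image $\pm\tfrac12$, at circular distance $1/2$ from zero. Lemma~\ref{lem:character-separation} with $\delta=1/2$ then gives $\chi_c(\Gamma_\alpha)\leq2$ and an ordinary coloring with two colors, namely the parity of the coefficient sum.

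For the lower bound, $\Gamma_\alpha$ has at least one edge, from $0$ to $1$, so $\chi(\Gamma_\alpha)\geq2$. I would also recall that any graph with an edge satisfies $\chi_c\geq2$: a circular $r$-coloring requires $r\geq2$ by definition, or, in normalized form, separation at most $1/2$. Combined with the upper bound this forces $\chi(\Gamma_\alpha)=\chi_c(\Gamma_\alpha)=2$.

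There is no real obstacle in this argument. The only point needing care is the well-definedness of the color, which rests entirely on the injectivity of evaluation at a transcendental number. This is exactly where the algebraic case differs, since there a relation $f(\alpha)=0$ of odd coefficient sum would obstruct parity, as in Lemma~\ref{lem:bipartite-criterion}.
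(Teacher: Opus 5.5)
Your proof is correct and matches the paper's. Both arguments use injectivity of evaluation at a transcendental $\alpha$, which the paper phrases as an isomorphism of $\Gamma_\alpha$ with the integer lattice graph in countably many directions, then the coefficient-sum parity composed with $j\mapsto j/2$ as a character of separation $1/2$, and finally the edge $\{0,1\}$ to show that both bounds are optimal.
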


\begin{proof}
    Let $\Z^{(\Z_{\geq0})}$ denote the group of finitely supported integer sequences, with standard basis vectors $e_n$.
    Evaluation sends a sequence $(b_n)_{n\geq0}$ to $\sum_{n\geq0}b_n\alpha^n$.
    This defines an additive map $\pi:\Z^{(\Z_{\geq0})}\to\Z[\alpha]$, which is surjective by definition and injective because $\alpha$ satisfies no nonzero integer polynomial relation.
    Since $\pi(e_n)=\alpha^n$, it gives an isomorphism of Cayley graphs
    \[\Cay\left(\Z^{(\Z_{\geq0})},\{\pm e_n:n\geq0\}\right)
    \cong\Gamma_\alpha.\]
    Thus $\Gamma_\alpha$ is the integer lattice graph in countably many coordinate directions, with each vertex having only finitely many nonzero coordinates.

    Each allowed step changes the sum of the coordinates by one, so
    \[c\left(\sum_{n\geq0}b_n\alpha^n\right)
    =\sum_{n\geq0}b_n\pmod2\]
    defines a graph homomorphism to $K_2$.
    Composing with $j\mapsto j/2\pmod1$ gives a character whose separation on every allowed step is $1/2$.
    Since $\Gamma_\alpha$ contains the edge $\{0,1\}$, both colorings are optimal.
\end{proof}

\begin{remark}[Adjoining a transcendental direction]
    Suppose $\alpha$ and $\beta$ are unit complex numbers and $\beta$ is transcendental over $\Q(\alpha)$.
    Every element of $\Z[\alpha,\beta]$ has a unique finite expansion $\sum_n c_n\beta^n$ with $c_n\in\Z[\alpha]$.
    Evaluation at $\beta=1$ defines an additive map $r:\Z[\alpha,\beta]\to\Z[\alpha]$ by summing these coefficients.
    It sends each allowed step $\pm\alpha^m\beta^n$ to $\pm\alpha^m$ and fixes $\Z[\alpha]$, giving
    \[\Gamma_\alpha\hookrightarrow\MonCay(\alpha,\beta)\xrightarrow{r}\Gamma_\alpha.\]
    Pulling back colorings along these maps shows that adjoining $\beta$ preserves both chromatic numbers.
    In particular, this applies whenever $\alpha$ is algebraic and $\beta$ is transcendental.

    The graph $\MonCay(U_{\mathrm{alg}}\cup\{\beta\})$ likewise retracts onto $\Gamma_{\mathrm{alg}}$ whenever $\beta$ is transcendental.
    Every element of $\Z[U_{\mathrm{alg}}]$ is algebraic, so evaluation at $\beta=1$ is well-defined.
    Both chromatic numbers are again preserved; the ordinary chromatic number already equals that of the plane.
\end{remark}

\subsection{Cyclotomic Parameters}
\label{subsec:cyclotomic}

Consider $\alpha=\zeta_{20}=e^{2\pi i/20}$ and its fifth-root power $\xi=\alpha^4$.
The relation $1+\xi+\xi^2+\xi^3+\xi^4=0$ closes a walk of five allowed unit steps.
For $0\leq j\leq5$, put
\[v_j=\sum_{k=0}^{j-1}\xi^k,\]
where the empty sum is zero.
Then $v_5=v_0=0$, and the five distinct vertices $v_0,\ldots,v_4$ form a regular pentagon in $\Gamma_\alpha$.
This odd cycle requires three colors.
Moreover, an independent set in the pentagon contains at most two vertices, so the arc-averaging argument from Subsection~\ref{subsec:three-coloring} gives a circular lower bound of $5/2$.
We will construct a coloring that attains both bounds.

The factorization $20=4\cdot5$ suggests combining labels for the fourth and fifth roots.
Set $i=\alpha^5$; since $\alpha=i\xi^4$, we have $\Z[\alpha]=\Z[i,\xi]$.
For the order-four factor, assign the labels
\[\bigl(L_4(1),L_4(i),L_4(-1),L_4(-i)\bigr)=(1,1,-1,-1)\]
in $\Z/5\Z$.
These labels respect the relations $1+(-1)=0$ and $i+(-i)=0$.
For the order-five factor, set $L_5(\xi^j)=1$ for $0\leq j\leq4$.
The fifth-root relation is respected because the five labels sum to zero modulo five.

Lemma~\ref{lem:coprime-labels} combines these assignments into an additive map $L:\Z[\alpha]\to\Z/5\Z$ satisfying $L(uv)=L_4(u)L_5(v)$ for $u\in\Z[i]$ and $v\in\Z[\xi]$.
In particular,
\[L(\alpha^j)=L_4(i^j)L_5(\xi^{4j})=L_4(i^j),\]
so the labels repeat with period four and every allowed step maps to $1$ or $-1$.
Thus $L$ maps $\Gamma_\alpha$ to the cycle $C_5=\Cay(\Z/5\Z,\{\pm1\})$.
Multiplication by two modulo five moves these step images as far from zero as possible.
The normalized multiplier gives the character $\psi=2L/5$ with step values
\[
\begin{array}{|c|c|c|c|c|}
\hline
j\bmod4&0&1&2&3\\
\hline
L(\alpha^j)&1&1&-1&-1\\
\hline
\psi(\alpha^j)&2/5&2/5&3/5&3/5\\
\hline
\end{array}
\]
Every allowed step has circular separation $2/5$, so $\psi$ gives a circular $5/2$-coloring of the entire graph.

This construction also gives a direct recipe for coloring any vertex.
Reduce its polynomial expression modulo $\Phi_{20}(x)=x^8-x^6+x^4-x^2+1$ to obtain $\sum_{j=0}^7a_j\alpha^j$, and compute
\[L\left(\sum_{j=0}^7a_j\alpha^j\right)
=a_0+a_1-a_2-a_3+a_4+a_5-a_6-a_7\pmod5.\]
Assign red, blue, and green according to which of the half-open arcs $[0,1/5)$, $[1/5,3/5)$, and $[3/5,1)$ contains $\psi(z)$.
Equivalently, the three classes of $L$-values are $\{0\}$, $\{1,3\}$, and $\{2,4\}$, respectively.
Each arc has length at most $2/5$, so adjacent vertices receive different colors.
Together with the pentagon, these colorings establish $\chi(\Gamma_\alpha)=3$ and $\chi_c(\Gamma_\alpha)=5/2$.

Each step of the pentagon has $L$-image $1$, so its vertices satisfy $L(v_j)=j\pmod5$ and $\psi(v_j)=2j/5\pmod1$.
Figure~\ref{fig:twentieth-root-coloring} follows these vertices into the circular target, where consecutive vertices are separated by $2/5$.

\begin{figure}[htbp]
    \centering
    \input{tikz-code/twentieth-roots-cyclotomic.tex}
    \caption{The pentagon in $\Gamma_{\zeta_{20}}$ and its image under $\psi=2L/5$.
    Arrows on the left indicate the five unit steps $\xi^j$, where $\xi=\alpha^4$.
    The circular target has edge separation $2/5$, and its three colored arcs pull back to a coloring of the entire graph.
    The embedded pentagon makes both the three-color and circular $5/2$ bounds optimal.}
    \label{fig:twentieth-root-coloring}
\end{figure}

The example suggests the general construction.
Let $\alpha$ have order $n$ with smallest odd prime divisor $p$.
We seek graph homomorphisms
\[C_p\hookrightarrow\Gamma_\alpha\longrightarrow C_p.\]
The cycle inside $\Gamma_\alpha$ supplies lower bounds, while the map to the same cycle pulls back its colorings.
Consequently, the two graphs have the same ordinary and circular chromatic numbers.

For the circular coloring of $C_p=\Cay(\Z/p\Z,\{\pm1\})$, put $h=(p-1)/2$.
The multiplier $h$ gives the character $j\mapsto hj/p\pmod1$, with separation $h/p$ on every edge.
Conversely, an independent set in $C_p$ has at most $h$ vertices, so arc averaging gives $p/r\leq h$ for every circular $r$-coloring.
Thus $\chi_c(C_p)=2p/(p-1)$, while its ordinary chromatic number is three.

To map $\Gamma_\alpha$ to $C_p$, we again assign signs to the powers of $\alpha$, now modulo $p$.
The prime-power relations from Subsection~\ref{subsec:cyclotomic-quotients} explain the choice of the smallest odd prime: every odd relation block has length at least $p$, so it can receive $p$ positive labels followed by cancelling pairs.

\begin{lemma}
    \label{lem:cyclotomic-sign-labels}
    Let $n$ have smallest odd prime divisor $p$.
    There is an additive map $L:\mathcal O_n\to\Z/p\Z$ such that $L(\zeta_n^j)\in\{1,-1\}$ for every $j\geq0$.
\end{lemma}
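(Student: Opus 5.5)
The plan is to imitate the Prime Power Case (Lemma~\ref{lem:prime-power-labels}) with target $\Z/p\Z$ in place of $\Z[\omega]$. Then the factors will be glued by the Composite Case (Lemma~\ref{lem:coprime-labels}), taking $R=\Z/p\Z$, whose units include $\pm1$. The products of labels in $\{\pm1\}$ stay in $\{\pm1\}$, so it suffices to build, for each prime power $\ell^a$ exactly dividing $n$, an additive map $L_{\ell^a}:\mathcal O_{\ell^a}\to\Z/p\Z$ sending every power of $\zeta_{\ell^a}$ into $\{\pm1\}$. The case $n=1$ has no odd prime divisor and is excluded by hypothesis. We factor $n=\prod \ell^{a_\ell}$ and apply Lemma~\ref{lem:coprime-labels} repeatedly, since the coprime factors can be combined one at a time. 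The resulting map sends $\zeta_n^j=\prod\zeta_{\ell^a}^{j_\ell}$ to a product of signs.

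For each prime power $m=\ell^a$ with $M=\ell^{a-1}$, the proof of Lemma~\ref{lem:prime-power-labels} shows the following. Labels on the $m$ powers define an additive map exactly when each block $\zeta^i,\zeta^{i+M},\ldots,\zeta^{i+(\ell-1)M}$ has label sum zero, for $0\le i<M$. For $\ell=2$ we use labels $1,-1$ in each block, as before. For odd $\ell$ we have $\ell\ge p$, because $p$ is the smallest odd prime divisor, and $\ell-p$ is even. In each block we assign $1$ to the first $p$ entries, which sum to $p\equiv0$, and then $(\ell-p)/2$ cancelling pairs $1,-1$. Every block sums to zero in $\Z/p\Z$, and all labels are $\pm1$.

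One subtlety is the step "labels on the $m$ powers satisfying the block relations define an additive map". This should be justified as in the earlier lemma. The first $\varphi(m)$ powers form an integral basis, so the map is defined on that basis. The block relations then express each remaining power $\zeta^{i+(\ell-1)M}$ as minus the sum of the rest of its block. Hence the map takes the prescribed value on every power, and periodicity mod $m$ covers all $j\ge0$. I expect no real obstacle. The only point needing care is checking that Lemma~\ref{lem:coprime-labels} was stated for an arbitrary commutative ring $R$, which it was. The prime-power lemma must be reproved with target $\Z/p\Z$, but its argument transfers verbatim.
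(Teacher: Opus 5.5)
Your proposal is correct and follows the paper's proof essentially step for step. Both use block labels $1,-1$ for $\ell=2$, and $p$ ones followed by $(\ell-p)/2$ cancelling pairs for odd $\ell$; both then glue over the prime-power factors with Lemma~\ref{lem:coprime-labels} taking $R=\Z/p\Z$, so each power of $\zeta_n$ receives a product of signs. Your explicit check that $\zeta^{i+(\ell-1)M}$ is the only power in its block outside the basis $1,\zeta,\ldots,\zeta^{\varphi(m)-1}$ supplies a detail the paper leaves implicit: a zero block sum then forces the prescribed label.
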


\begin{proof}
    Consider a prime-power factor $\ell^k$ of $n$, put $M=\ell^{k-1}$, and let $\xi$ be a primitive $\ell^k$th root of unity.
    As in Lemma~\ref{lem:prime-power-labels}, the powers of $\xi$ occur in relation blocks
    \[\xi^a+\xi^{a+M}+\cdots+\xi^{a+(\ell-1)M}=0\]
    for $0\leq a<M$.
    Labels on the $\ell^k$ powers define an additive map on $\Z[\xi]$ precisely when their sum in each block is zero.
    If $\ell=2$, assign the labels $1,-1$.
    If $\ell$ is odd, assign $p$ labels equal to $1$, followed by $(\ell-p)/2$ pairs $1,-1$.
    Their sum is $p=0$ in $\Z/p\Z$.
    Since the powers repeat with period $\ell^k$, every power receives a label in $\{1,-1\}$.
    Lemma~\ref{lem:coprime-labels}, with target ring $\Z/p\Z$, combines these maps over the distinct prime-power factors of $n$.
    Each power of $\zeta_n$ receives a product of signs, which again belongs to $\{1,-1\}$.
\end{proof}

\begin{theorem}
    \label{thm:cyclotomic}
    Let $\alpha$ be a root of unity of order $n$.
    If $n=2^k$ for some $k\geq0$, then
    \[\chi(\Gamma_\alpha)=\chi_c(\Gamma_\alpha)=2.\]
    Otherwise, let $p$ be the smallest odd prime divisor of $n$.
    Then $\chi(\Gamma_\alpha)=3$ and
    \[\chi_c(\Gamma_\alpha)=\frac{2p}{p-1}.\]
\end{theorem}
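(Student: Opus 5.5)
The plan follows the template of the twentieth-root example. We exhibit homomorphisms $C_p\hookrightarrow\Gamma_\alpha\to C_p$ when $n$ has an odd prime factor, and a direct parity argument when $n$ is a power of two. Both ordinary and circular chromatic numbers are monotone under graph homomorphisms, so the two bounds will match.

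\emph{Power-of-two case.} When $n=2^k$, the minimal polynomial is $\Phi_{2^k}$. For $k=0$ it is $x-1$; for $k\geq1$ it is $x^{2^{k-1}}+1$. In every case $f(1)\in\{0,2\}$ is even, so Lemma~\ref{lem:bipartite-criterion} makes $\Gamma_\alpha$ bipartite. For a circular bound, we compose the parity map $p(\alpha)\mapsto p(1)\pmod2$ with $j\mapsto j/2$. This gives a character with separation $1/2$ on every step, so Lemma~\ref{lem:character-separation} gives $\chi_c\leq2$. The edge $\{0,1\}$ gives the matching lower bound. (For $k=0$, $\alpha=1$ and $\Gamma_\alpha=\Cay(\Z,\{\pm1\})$, which the same argument covers.)

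\emph{Upper bound for general $n$.} Let $p$ be the smallest odd prime divisor of $n$. Since $\alpha$ is a primitive $n$th root of unity, $\Z[\alpha]=\mathcal O_n$. Lemma~\ref{lem:cyclotomic-sign-labels} then gives an additive $L:\Z[\alpha]\to\Z/p\Z$ with every power $\alpha^j$ sent to $\pm1$. Hence every allowed step maps to $\pm1$, and $L$ is a homomorphism $\Gamma_\alpha\to C_p$. Composing with $j\mapsto hj/p$, where $h=(p-1)/2$, gives separation $h/p$. Lemma~\ref{lem:character-separation} yields $\chi_c(\Gamma_\alpha)\leq 2p/(p-1)$ and $\chi(\Gamma_\alpha)\leq\lceil 2p/(p-1)\rceil=3$, since $p\geq3$.

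\emph{Lower bound.} Let $\xi=\alpha^{n/p}$, a primitive $p$th root of unity, and set $v_j=\sum_{k<j}\xi^k$ for $0\leq j\leq p$. Then $v_p=0$ and consecutive differences $\xi^j$ are allowed steps. The one point to check, which I expect to be the only real obstacle, is that $v_0,\dots,v_{p-1}$ are distinct, so that the walk is a genuine $p$-cycle. Geometrically these are the vertices of a regular $p$-gon. Algebraically, $v_i=v_j$ with $0\leq i<j\leq p-1$ would give a relation $\sum_{i\leq k<j}\xi^k=0$ of length less than $p$. This is impossible because $1,\xi,\dots,\xi^{p-2}$ are linearly independent and $\Phi_p$ is, up to scalars, the only relation among $1,\xi,\dots,\xi^{p-1}$.

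So $C_p\subseteq\Gamma_\alpha$, which forces $\chi\geq3$. For the circular bound we apply the arc-averaging argument of Subsection~\ref{subsec:three-coloring}. Independent sets in $C_p$ have at most $h$ vertices, so any circular $r$-coloring satisfies $p/r\leq h$, giving $\chi_c\geq 2p/(p-1)$. Combined with the upper bounds, this proves the theorem.
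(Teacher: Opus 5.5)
Your proposal is correct and follows the paper's proof essentially step for step: the bipartite criterion for $n=2^k$, Lemma~\ref{lem:cyclotomic-sign-labels} to map $\Gamma_\alpha\to C_p$, and the embedded $p$-cycle $v_j=\sum_{k<j}\xi^k$ with arc averaging for the matching lower bounds. The only minor difference is that you prove the $v_j$ are distinct from the linear independence of $1,\xi,\ldots,\xi^{p-2}$, whereas the paper uses $v_j=(1-\xi^j)/(1-\xi)$ together with the fact that $\xi$ has order $p$; both arguments are valid.
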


\begin{proof}
    If $n=1$, then $\Phi_n(x)=x-1$.
    If $n=2^k$ with $k\geq1$, then $\Phi_n(x)=x^{2^{k-1}}+1$.
    In both cases $\Phi_n(1)$ is even, so Lemma~\ref{lem:bipartite-criterion} gives a two-coloring.
    The edge $\{0,1\}$ makes both chromatic numbers exactly two.

    Now suppose that $n$ has smallest odd prime divisor $p$.
    Apply Lemma~\ref{lem:cyclotomic-sign-labels} with $\zeta_n=\alpha$.
    Since every allowed step maps to $1$ or $-1$, this gives the required graph homomorphism $L:\Gamma_\alpha\to C_p$.

    To construct the embedded cycle, put $\beta=\alpha^{n/p}$ and define $v_j=\sum_{k=0}^{j-1}\beta^k$ for $0\leq j\leq p$, with the empty sum equal to zero.
    The relation $1+\beta+\cdots+\beta^{p-1}=0$ gives $v_p=v_0$, and each difference $v_{j+1}-v_j=\beta^j$ is an allowed step.
    Since $\beta$ has order $p$ and $v_j=(1-\beta^j)/(1-\beta)$, the vertices $v_0,\ldots,v_{p-1}$ are distinct.
    They form a copy of $C_p$ in $\Gamma_\alpha$, completing the two maps and hence the classification.
\end{proof}

The classification has a geometric consequence.
For a finite unit-distance graph whose edge directions make rational multiples of $\pi$ with one another, a rotation makes every edge vector a root of unity of a common order.
Following paths then places each connected component in a cyclotomic monomial Cayley graph.

\begin{corollary}
    \label{cor:rational-angle}
    Every graph admitting a unit-distance embedding in the plane in which the angle between any two edge directions is a rational multiple of $\pi$ is three-colorable.
\end{corollary}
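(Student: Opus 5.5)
The plan is to reduce the corollary to Theorem~\ref{thm:cyclotomic}, following the outline in the preceding paragraph. Let $G$ be a graph with a unit-distance embedding $z:V(G)\to\C$ whose edge directions make pairwise angles that are rational multiples of $\pi$. It suffices to three-color each connected component separately, so assume $G$ is connected.

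If $G$ has no edges, one color suffices. Otherwise, fix one edge and orient it, with direction $e^{i\theta_0}$. Every other oriented edge vector is $e^{i\theta_e}$, and by hypothesis $\theta_e-\theta_0\in\pi\Q$ (reversing an orientation only adds $\pi$). Multiplying the embedding by $e^{-i\theta_0}$ is a rotation, so it preserves unit distances and distinctness. After this rotation, every edge vector is a root of unity.

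If $G$ is finite, take $n$ to be a common multiple of the orders of these roots of unity. Then every edge vector is a power of $\zeta_n$. The powers of $\zeta_n$ already include $-1$ when $n$ is even, and we may always double $n$, so every edge vector lies in $\Mon(\zeta_n)$. Next translate so that a chosen base vertex sits at $0$. Following a path from the base vertex writes each vertex position as a sum of edge vectors, so every vertex lies in $\Z[\zeta_n]$. Since the embedding is injective and every edge difference is an allowed step, the embedding is an injective graph homomorphism $G\to\Gamma_{\zeta_n}$. The graph $G$ is therefore a subgraph of $\Gamma_{\zeta_n}$, and Theorem~\ref{thm:cyclotomic} gives $\chi(G)\leq\chi(\Gamma_{\zeta_n})\leq3$. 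Pulling back the coloring of $\Gamma_{\zeta_n}$ is exactly the three-coloring constructed there.

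The main obstacle is the infinite case, where infinitely many angles may occur and no common order $n$ need exist. Here I would invoke the de Bruijn--Erd\H{o}s compactness theorem cited in the introduction. Each finite subgraph $H$ inherits the hypothesis from the restricted embedding, and the argument above makes each finite connected component of $H$ three-colorable, hence $H$ itself. Compactness then gives a three-coloring of all of $G$.

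Alternatively, one could avoid compactness by working in $\MonCay$ of all roots of unity and noting that each vertex involves only finitely many orders. However, the coloring in Lemma~\ref{lem:cyclotomic-sign-labels} depends on the smallest odd prime of $n$, so those colorings are not obviously compatible across different $n$. Compactness sidesteps this issue, so it is the route I would take.
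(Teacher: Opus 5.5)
Your proposal is correct and follows essentially the same route as the paper. You rotate by one oriented edge vector so that all edge vectors become powers of a common $\zeta_n$, translate each component into $\Z[\zeta_n]$ to realize it as a subgraph of $\Gamma_{\zeta_n}$, apply Theorem~\ref{thm:cyclotomic}, and pass to infinite graphs by de Bruijn--Erd\H{o}s compactness. The step of doubling $n$ is harmless but unnecessary, since each edge vector is already a power of $\zeta_n$ and $\Mon(\zeta_n)$ contains both signs.
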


\begin{proof}
    First let $H$ be a finite graph with such an embedding, identified with points in $\C$.
    If $H$ has no edges, the conclusion is immediate.
    Otherwise, fix an oriented edge vector $u$.
    Dividing by $u$ rotates this direction to $1$ and every other oriented edge vector $v$ to the root of unity $v/u$.
    There are finitely many such vectors, so their orders have a common multiple $n$ and all are powers of $\zeta_n$.

    In each connected component, choose a vertex $z_0$ and apply $z\mapsto(z-z_0)/u$.
    The image of any vertex is the sum of normalized edge vectors along a path from $z_0$, so it lies in $\Z[\zeta_n]$.
    Every edge difference is a signed power of $\zeta_n$.
    Thus each component embeds as a subgraph of $\Gamma_{\zeta_n}$, and Theorem~\ref{thm:cyclotomic} supplies a three-coloring.
    Using the same three colors on all components colors $H$.

    For an infinite graph, every finite subgraph inherits the rational-angle condition and is therefore three-colorable.
    The de Bruijn--Erd\H{o}s compactness theorem~\cite{deBruijnErdos1951} gives a three-coloring of the entire graph.
\end{proof}

In particular, every four-chromatic unit-distance graph, including the Moser spindle, must have two edge directions whose angle is an irrational multiple of $\pi$.
The cyclotomic classification therefore identifies a necessary geometric feature of the obstruction that motivated our program.

%% file: tikz-code/twentieth-roots-cyclotomic.tex
\begin{tikzpicture}[
    line cap=round,
    line join=round,
    font=\small,
    graph edge/.style={draw=black!85,line width=0.65pt},
    unit step/.style={graph edge,-{Stealth[length=4pt,width=4pt]},
        shorten >=3pt,shorten <=3pt},
    vertex label/.style={font=\scriptsize,inner sep=2pt},
    step label/.style={font=\scriptsize,inner sep=2pt,fill=white}
]
    \definecolor{googleRed}{HTML}{EA4335}
    \definecolor{googleBlue}{HTML}{4285F4}
    \definecolor{googleGreen}{HTML}{34A853}

    \pgfmathsetmacro{\pentagoncenter}{0.5/tan(36)}
    \coordinate (V0) at ({-0.8},{-1.6*\pentagoncenter});
    \coordinate (V1) at ({0.8},{-1.6*\pentagoncenter});
    \coordinate (V2) at ({1.6*(0.5+cos(72))},{1.6*(sin(72)-\pentagoncenter)});
    \coordinate (V3) at (0,{1.6*(sin(72)+sin(144)-\pentagoncenter)});
    \coordinate (V4) at ({-1.6*(0.5+cos(72))},{1.6*(sin(72)-\pentagoncenter)});

    \draw[unit step] (V0) -- (V1) node[midway,below=3pt,step label] {$1$};
    \draw[unit step] (V1) -- (V2) node[midway,below right=2pt,step label] {$\xi$};
    \draw[unit step] (V2) -- (V3) node[midway,above right=2pt,step label] {$\xi^2$};
    \draw[unit step] (V3) -- (V4) node[midway,above left=2pt,step label] {$\xi^3$};
    \draw[unit step] (V4) -- (V0) node[midway,below left=2pt,step label] {$\xi^4$};

    \foreach \j/\pointcolor in {0/googleRed,1/googleBlue,2/googleGreen,3/googleBlue,4/googleGreen} {
        \filldraw[fill=\pointcolor,draw=white,line width=0.5pt]
            (V\j) circle[radius=3pt];
    }
    \node[vertex label,below left=3pt] at (V0) {$v_0=0$};
    \node[vertex label,below right=3pt] at (V1) {$v_1=1$};
    \node[vertex label,right=3pt] at (V2) {$v_2$};
    \node[vertex label,above=3pt] at (V3) {$v_3$};
    \node[vertex label,left=3pt] at (V4) {$v_4$};
    \node at (0,2.25) {A pentagon in $\Gamma_{\zeta_{20}}$};
    \node[font=\scriptsize] at (0,-1.95) {$L(v_j)=j\pmod5$};

    \draw[->,thick,black!85] (2.05,0) -- (3.25,0)
        node[midway,below=4pt] {$\psi=2L/5$};

    \begin{scope}[shift={(5.65,0)}]
        \def\circleradius{1.35}
        \def\circlelabelradius{1.53}
        \node at (0,2.25) {Circular target};

        \draw[googleRed!35,line width=8pt,line cap=butt]
            (90:\circleradius)
            arc[start angle=90,end angle=162,radius=\circleradius];
        \draw[googleBlue!35,line width=8pt,line cap=butt]
            (162:\circleradius)
            arc[start angle=162,end angle=306,radius=\circleradius];
        \draw[googleGreen!40,line width=8pt,line cap=butt]
            (306:\circleradius)
            arc[start angle=306,end angle=450,radius=\circleradius];

        \foreach \k in {0,...,4} {
            \pgfmathsetmacro{\pointangle}{90+72*\k}
            \coordinate (C\k) at (\pointangle:\circleradius);
        }
        \foreach \k in {0,...,4} {
            \pgfmathtruncatemacro{\nextpoint}{mod(\k+2,5)}
            \draw[graph edge] (C\k) -- (C\nextpoint);
        }
        \foreach \k/\pointcolor in {0/googleRed,1/googleBlue,2/googleBlue,3/googleGreen,4/googleGreen} {
            \filldraw[fill=\pointcolor,draw=white,line width=0.5pt]
                (C\k) circle[radius=3pt];
        }

        \node[vertex label,anchor=south] at (90:\circlelabelradius) {$\psi(v_0)=0$};
        \node[vertex label,anchor=south east] at (162:\circlelabelradius) {$\psi(v_3)=1/5$};
        \node[vertex label,anchor=north east] at (234:\circlelabelradius) {$\psi(v_1)=2/5$};
        \node[vertex label,anchor=north west] at (306:\circlelabelradius) {$\psi(v_4)=3/5$};
        \node[vertex label,anchor=south west] at (378:\circlelabelradius) {$\psi(v_2)=4/5$};
        \node[font=\scriptsize] at (0,-1.95) {Circular edge separation $2/5$};
    \end{scope}
\end{tikzpicture}

%% file: 4-noncyclotomic.tex

\section{Noncyclotomic Algebraic Parameters}
\label{sec:noncyclotomic-algebraic}

For an algebraic parameter, a polynomial relation constrains the character value at each successive power.
When the relation is nonmonic, several choices remain; when it is monic, the next value is determined.
We use this distinction to study the parameters left after Section~\ref{sec:transcendental-cyclotomic}, beginning with a construction that makes the available choices visible.

Throughout this section, let $\alpha$ be a noncyclotomic algebraic direction parameter with primitive integer minimal polynomial $f(x)=\sum_{j=0}^{d}a_jx^j$, normalized by $a_d>0$.
Retain the trace notation $f(x)=x^rF(x+x^{-1})$ from Subsection~\ref{subsec:trace-quotients}, where $d=2r$ and $F$ has leading coefficient $a_d$.

\subsection{Algebraic Noninteger Parameters}
\label{sec:algebraic-nonintegers}
\label{subsec:noninteger-circular-bound}

For this subsection and the next, assume that $a_d\geq2$.
Let $\alpha=(1+i\sqrt{195})/14$, whose primitive integer minimal polynomial is
\[f(x)=7x^2-x+7.\]
We aim to keep the character values of its powers in the closed arc $I=[3/7,4/7]$, so that every allowed step has circular separation at least $3/7$.
Begin with $\psi(1)=\psi(\alpha)=1/2$.
The relation $7\alpha^2=\alpha-7$ requires, in $\T$,
\[7\psi(\alpha^2)=\psi(\alpha)-7\psi(1)=0.\]
The seven possible values are $j/7$ for $0\leq j\leq6$.
Both $3/7$ and $4/7$ belong to $I$; choose $\psi(\alpha^2)=3/7$.
The next relation, $7\alpha^3=\alpha^2-7\alpha$, requires
\[7\psi(\alpha^3)=\frac37-\frac72=\frac{13}{14}\pmod1.\]
Of its seven solutions, only $55/98$ belongs to $I$.
Figure~\ref{fig:semiminimal-extension} shows these two extensions.
Although the candidate values move around the circle, their spacing remains $1/7$.
The closed arc $I$ has exactly this length, so it contains a permissible choice at every stage.
The Shift Criterion makes the resulting assignments into a character with separation at least $3/7$, giving a circular $7/3$-coloring.

\begin{figure}[htbp]
    \centering
    \input{tikz-code/semiminimal-extension.tex}
    \caption{Successive choices for $\psi(\alpha^2)$ and $\psi(\alpha^3)$.
    The candidate values are equally spaced by $1/7$ in both panels, so the blue arc $[3/7,4/7]$ always contains a choice.
    Filled green points mark the selected values $3/7$ and $55/98$; the open green point is the other permissible first choice.}
    \label{fig:semiminimal-extension}
\end{figure}

The opening example uses the character-extension mechanism of Garc\'ia-Marco, Knauer, and Menara~\cite[Lemma~2.2]{GarciaMarcoKnauerMenara2026}.
Their theorem for an infinite semiminimal family gives the following bound directly~\cite[Theorem~4.3]{GarciaMarcoKnauerMenara2026}.
Indeed, the first $d$ powers are linearly independent over $\Q$.
For each later power, the least positive integer multiplying it into the integer span of the earlier powers is $a_d$: the shifted minimal polynomial gives this multiple, and Gauss's lemma makes every such multiplier divisible by $a_d$.

\begin{theorem}[Garc\'ia-Marco--Knauer--Menara]
    \label{thm:algebraic-noninteger}
    Let $\alpha$ be an algebraic noninteger direction parameter whose primitive integer minimal polynomial has leading coefficient $a_d\geq2$.
    Then
    \[\chi_c(\Gamma_\alpha)\leq\frac{2a_d}{a_d-1}.\]
    Consequently, $\chi(\Gamma_\alpha)\leq\lceil2a_d/(a_d-1)\rceil$.
    Four colors suffice in general, and three suffice when $a_d\geq3$.
\end{theorem}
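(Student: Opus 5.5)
We construct a character $\psi:\Z[\alpha]\to\T$ one power at a time, exactly as in the opening example for $7x^2-x+7$, and then apply the Shift Criterion (Proposition~\ref{prop:shift-criterion}) and Lemma~\ref{lem:character-separation}. Put $a=a_d$ and let $I$ be the closed arc centred at $1/2$ of length $1/a$:
\[
I=\left[\frac{a-1}{2a},\frac{a+1}{2a}\right].
\]
Every point of $I$ has circular distance at least $(a-1)/(2a)$ from zero. Since $I$ is symmetric about $1/2$, negation preserves this distance, so the signed steps $\pm\alpha^n$ are also covered. If we can choose values $u_n=\psi(\alpha^n)\in I$ for all $n\geq0$ satisfying every shifted relation, Lemma~\ref{lem:character-separation} gives $\chi_c(\Gamma_\alpha)\leq 2a/(a-1)$.

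The ordinary bound then follows from $\chi\leq\lceil\chi_c\rceil$. For $a=2$ we get $2a/(a-1)=4$. For $a\geq3$ we get $2a/(a-1)\leq3$, which yields three colors.

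The choices proceed in order.
\begin{enumerate}
\item For $0\leq n<d$ there is no constraint among the $u_n$ beyond those the relations will later impose. The shifted relations $\sum_j a_j u_{n+j}=0$ each involve a new top term $u_{n+d}$. So set $u_0=\cdots=u_{d-1}=1/2$.
\item For $n\geq0$, the relation with top term $u_{n+d}$ reads
\[
a\,u_{n+d}=-\sum_{j<d}a_ju_{n+j}\quad\text{in }\T.
\]
Given the earlier values, its solutions form a coset of $\tfrac1a\Z/\Z$: $a$ points equally spaced by $1/a$.
\item A closed arc of length exactly $1/a$ contains at least one point of any such coset. So we may choose $u_{n+d}\in I$.
\item By induction every $u_n$ lies in $I$, and every shifted relation holds by construction. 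The Shift Criterion then assembles these values into a character.
\end{enumerate}

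This is the extension lemma of Garc\'ia-Marco, Knauer, and Menara. The remark before the theorem, that each new power needs multiplier exactly $a_d$, is the reason the solution set in step 2 is a coset of $\tfrac1a\Z/\Z$. In our setting it is immediate because the relation involves $a\,u_{n+d}$ and $\T$ is divisible.

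The main point to handle with care is the bookkeeping in the Shift Criterion. The relations for $n\geq0$ are indexed so that relation $n$ determines exactly the single new value $u_{n+d}$ and constrains nothing else. We must check that no later relation ever revisits an already fixed value as its top term, so there is no consistency condition beyond the one solved at each step. Since relation $n$ has top index $n+d$, these indices are distinct and increasing, so the induction is sound.

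Finally, we check that the circular bound is attained in the claimed integer form. For $a\geq3$, $2a/(a-1)\leq3$ gives three colors. For $a=2$, the bound $4$ gives four colors. This covers every $a\geq2$, so four colors suffice in general.
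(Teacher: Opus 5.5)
Your proposal is correct and matches the paper's proof: the same closed arc $I$ of length $1/a_d$ centred at $1/2$, the initial values $u_0=\cdots=u_{d-1}=1/2$, and the inductive choice of $u_{n+d}\in I$ among the $a_d$ equally spaced solutions of the shifted relation. As in the paper, these values are then assembled by the Shift Criterion (Proposition~\ref{prop:shift-criterion}), and Lemma~\ref{lem:character-separation} converts the separation $(a_d-1)/(2a_d)$ into the circular and ordinary bounds.
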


\begin{proof}
    Put $\delta=(a_d-1)/(2a_d)$ and take the closed arc
    \[I=\left[\frac12-\frac1{2a_d},\frac12+\frac1{2a_d}\right]\subseteq\T.\]
    Its length is $1/a_d$, and every point of $I$ has circular distance at least $\delta$ from zero.
    Choose $u_0=\cdots=u_{d-1}=1/2$.
    Once $u_0,\ldots,u_{n+d-1}$ have been chosen, choose $u_{n+d}\in I$ satisfying
    \[a_du_{n+d}=-\sum_{j=0}^{d-1}a_ju_{n+j}\quad\text{in }\T.\]
    This equation has $a_d$ equally spaced solutions, so $I$ contains at least one of them.
    The Shift Criterion (Proposition~\ref{prop:shift-criterion}) gives a character $\psi$ with $\psi(\alpha^n)=u_n\in I$ for every $n$.
    Its separation is at least $\delta$, and Lemma~\ref{lem:character-separation} gives the bounds.
\end{proof}

Rounding as in Subsection~\ref{subsec:characters-circulants} gives a finite circulant target; for the opening example, this is $K_{7/3}\cong C_7$.
When $a_d\geq3$, the bipartite criterion distinguishes the two possible ordinary chromatic numbers.

\begin{corollary}
    \label{cor:algebraic-partial-classification}
    If $a_d\geq3$, then
    \[\chi(\Gamma_\alpha)=
    \begin{cases}
        2,&f(1)\text{ is even},\\
        3,&f(1)\text{ is odd}.
    \end{cases}\]
\end{corollary}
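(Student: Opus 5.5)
The corollary follows by combining the upper bound of Theorem~\ref{thm:algebraic-noninteger} with the bipartite criterion, Lemma~\ref{lem:bipartite-criterion}. I would first note that $\Gamma_\alpha$ always contains the edge $\{0,1\}$, so $\chi(\Gamma_\alpha)\geq2$. Next, when $a_d\geq3$ we have
\[\frac{2a_d}{a_d-1}=2+\frac{2}{a_d-1}\leq3,\]
so Theorem~\ref{thm:algebraic-noninteger} gives $\chi_c(\Gamma_\alpha)\leq3$. It therefore gives $\chi(\Gamma_\alpha)\leq\lceil 2a_d/(a_d-1)\rceil\leq3$; for $a_d\geq4$ the ceiling is exactly $3$, and for $a_d=3$ the value is exactly $3$. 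Hence $\chi(\Gamma_\alpha)\in\{2,3\}$ in every case.

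It remains to decide between the two values, and here I would invoke Lemma~\ref{lem:bipartite-criterion} directly. If $f(1)$ is even, the parity map $p(\alpha)\mapsto p(1)\pmod 2$ is a proper two-coloring, so $\chi(\Gamma_\alpha)=2$. If $f(1)$ is odd, the relation $f(\alpha)=0$ produces a closed walk of odd length, so the graph contains an odd cycle and is not bipartite. Combined with the upper bound of three, this gives $\chi(\Gamma_\alpha)=3$.

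There is no real obstacle in this argument. The only point needing care is the case $a_d=3$, where the circular bound equals $3$ exactly; the ceiling is then still $3$, and Lemma~\ref{lem:character-separation} indeed yields $\lceil1/\delta\rceil=3$ ordinary colors with $\delta=1/3$. I would also remark that the parity of $f(1)$ is unaffected by normalizing the sign of the leading coefficient, so the case split is well defined.
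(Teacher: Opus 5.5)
Your proposal is correct and follows the paper's argument: Theorem~\ref{thm:algebraic-noninteger} bounds $\chi(\Gamma_\alpha)$ by three, and it applies because $a_d\geq3$ already makes $\alpha$ a noninteger. Lemma~\ref{lem:bipartite-criterion} then decides between two and three according to the parity of $f(1)$.
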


The uniform construction leaves a gap only when the leading coefficient is two.
We conjecture that three colors still suffice.

\begin{conjecture}
    \label{conj:noninteger-three-color}
    If $\alpha$ is an algebraic noninteger direction parameter, then 
    \[\chi(\Gamma_\alpha)\leq3.\]
\end{conjecture}

\subsection{Leading Coefficient Two}
\label{subsec:leading-coefficient-two}

When $a_d=2$ and $f(1)$ is even, the graph is bipartite.
For odd $f(1)$, the extension argument gives separation $1/4$ and four colors, but evaluation can do better.
If any of $F(-2)$, $F(0)$, or $F(2)$ has absolute value greater than one, an elementary evaluation gives a three-coloring.
The eighth-root quotient can succeed even when all three tests fail.

Consider
\[f(x)=2x^6-2x^4+x^3-2x^2+2,\]
with trace polynomial $F(y)=2y(y^2-4)+1$.
The cubic $F$ has no rational root and changes sign between $0$ and $1$, giving a real root $y_0\in(0,1)$.
Then $\alpha=(y_0+i\sqrt{4-y_0^2})/2$ has modulus one and satisfies $f(\alpha)=0$.
The field $\Q(y_0)$ is real of degree three, and $\alpha$ is nonreal and quadratic over it, so $\alpha$ has degree six over $\Q$.
Thus $f$ is its primitive integer minimal polynomial.

The factored trace form gives $F(-2)=F(0)=F(2)=1$.
At the eighth-root evaluation, however,
\[F(y)\equiv1-4y\pmod{y^2-2}.\]
The norm is $D_8=|1-2\cdot16|=31$.
In the quotient coordinates from Subsection~\ref{subsec:trace-quotients}, we have $S_8\cong\Z/31\Z$ with $[\sqrt2]=8$.
Define $h:R_8(f)\to\Z/31\Z$ by
\[h\bigl(a+b[\zeta_8]\bigr)=19a+16b\pmod{31}.\]
The relation $[\zeta_8]^2=8[\zeta_8]-1$ shows that every signed root direction maps into $\{12,15,16,19\}$.
Thus
\[\Gamma_\alpha\longrightarrow\Gamma_8(f)\longrightarrow K_{31/12}.\]
After division by $31$, the character values all lie in the middle third of the circle, with separation at least $12/31$.
This gives $\chi_c(\Gamma_\alpha)\leq31/12<3$.
Since $f(1)=1$ excludes bipartiteness, $\chi(\Gamma_\alpha)=3$.

The three equal trace values in this example are forced whenever the elementary evaluations give no coloring bound.
The following observation will also be useful when we turn to algebraic integers.

\begin{lemma}
    \label{lem:trace-evaluation-signs}
    Let $F\in\Z[y]$.
    If $F(-2),F(0),F(2)\in\{1,-1\}$, then these three values are equal.
    If also $F(-1),F(1)\in\{1,-1\}$, then all five values are equal.
\end{lemma}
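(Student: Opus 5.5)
The proof rests on one divisibility fact. For an integer polynomial $F$ and integers $u,v$, the difference $u-v$ divides $F(u)-F(v)$. We apply it to the pairs of evaluation points and see which sign patterns survive.

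First, the three values $F(-2),F(0),F(2)$. Since $2$ divides $F(2)-F(0)$ and $F(0)-F(-2)$, and each value is $\pm1$, each difference lies in $\{-2,0,2\}$ and gives no constraint by itself. The constraint comes from the pair $(2,-2)$: $4$ divides $F(2)-F(-2)$. A difference in $\{-2,0,2\}$ that is divisible by $4$ must be $0$, so $F(2)=F(-2)$. To bring in $F(0)$, we use parity of coefficients:
\[
F(2)=F(0)+2\,c_1+4(\cdots),\qquad F(-2)=F(0)-2\,c_1+4(\cdots),
\]
where $c_1$ is the linear coefficient. Hence $F(2)+F(-2)\equiv 2F(0)\pmod 8$. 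With $F(2)=F(-2)=\varepsilon$, this gives $2\varepsilon\equiv 2F(0)\pmod 8$. If $F(0)=-\varepsilon$, we would need $4\varepsilon\equiv 0\pmod 8$, which is false. Therefore $F(0)=\varepsilon$, and the three values are equal.

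The congruence modulo $8$ is the main step, since plain divisibility by $u-v$ cannot link $F(0)$ to its neighbours. The argument works because all terms of degree at least two contribute multiples of $8$ to $F(2)+F(-2)-2F(0)$. Indeed, even-degree terms $c_{2k}(2^{2k}+2^{2k})$ with $k\geq1$ are divisible by $8$, and odd-degree terms cancel.

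Now assume in addition that $F(-1),F(1)\in\{1,-1\}$, and let $\varepsilon$ be the common value found above. Since $3=2-(-1)$ divides $F(2)-F(-1)$, and the difference $\varepsilon-F(-1)$ lies in $\{0,\pm2\}$, it must be $0$, so $F(-1)=\varepsilon$. Symmetrically, $3=1-(-2)$ divides $F(1)-F(-2)$, so $F(1)=\varepsilon$. All five values are therefore equal.
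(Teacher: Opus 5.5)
Your proof is correct and follows the paper's argument step for step: $F(2)\equiv F(-2)\pmod 4$, then $F(2)+F(-2)\equiv 2F(0)\pmod 8$ to bring in the middle value, then the mod-$3$ pairings $F(-1)\equiv F(2)$ and $F(1)\equiv F(-2)$. One small point: adding your two displayed expansions with their separate $4(\cdots)$ remainders only gives a congruence modulo $4$. It is your following term-by-term check that even-degree terms contribute $2^{2k+1}c_{2k}$ and odd-degree terms cancel that actually establishes the congruence modulo $8$.
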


\begin{proof}
    Every integer polynomial satisfies $F(2)\equiv F(-2)\pmod4$, so the first and last signs must agree.
    We also have $F(2)+F(-2)\equiv2F(0)\pmod8$: the odd powers cancel, and each positive even power contributes a multiple of eight.
    Substituting $F(-2)=F(2)$ gives $F(0)\equiv F(2)\pmod4$, so the middle sign agrees as well.
    Finally, $F(1)\equiv F(-2)\pmod3$ and $F(-1)\equiv F(2)\pmod3$ force the two remaining signs to have the same value.
\end{proof}

Subtracting the common sign produces roots at $-2$, $0$, and $2$.
This factorization, together with the stronger quotient tests, restricts any counterexample to the noninteger conjecture.

\begin{proposition}
    \label{prop:noninteger-counterexample-restrictions}
    Let $\alpha$ be an algebraic noninteger direction parameter with $\chi(\Gamma_\alpha)>3$.
    Then $\chi(\Gamma_\alpha)=4$, and its primitive integer minimal polynomial has leading and constant coefficients two.
    Its trace polynomial has the form
    \[F(y)=y(y^2-4)Q(y)\pm1,\]
    where $Q\in\Z[y]$ is nonzero and has leading coefficient two.
    Moreover, $\Phi_m\nmid f\pmod3$ for every $m\geq1$.
    If $F(y)\equiv A+By\pmod{y^2-2}$, then $D_8=|A^2-2B^2|\in\{1,7\}$.
\end{proposition}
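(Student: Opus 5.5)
The plan is to collect the earlier coloring criteria and argue by contraposition: each criterion that applies gives $\chi(\Gamma_\alpha)\leq3$, so a counterexample must avoid all of them. First, Theorem~\ref{thm:algebraic-noninteger} gives $\chi(\Gamma_\alpha)\leq4$ whenever $a_d\geq2$, and it gives three colors when $a_d\geq3$. Hence $\chi(\Gamma_\alpha)>3$ forces $\chi(\Gamma_\alpha)=4$ and $a_d=2$. Since $\alpha$ lies on the unit circle and is not $\pm1$, the minimal polynomial $f$ is reciprocal (Subsection~\ref{subsec:trace-quotients}). So the constant coefficient equals $a_d=2$, and $F$ also has leading coefficient $2$.

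Next I use the evaluation tests. By Lemma~\ref{lem:elementary-evaluation} and the trace table, any of $F(-2)$, $F(0)$, $F(2)$ with absolute value greater than one gives three colors. A zero value cannot occur: it would give the factor $x+1$, $x^2+1$, or $x-1$ of the irreducible $f$, which is impossible since $\alpha$ is not a root of unity. So all three values lie in $\{\pm1\}$, and by Lemma~\ref{lem:trace-evaluation-signs} they share a common sign $\epsilon$. Then $F-\epsilon$ vanishes at $-2,0,2$. These are integer roots of a polynomial in $\Z[y]$, so dividing by the monic factor $y(y^2-4)$ gives $F=y(y^2-4)Q+\epsilon$ with $Q\in\Z[y]$. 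Comparing leading coefficients shows $Q$ has leading coefficient $2$. Also $Q\neq0$, because otherwise $F=\pm1$ would have leading coefficient $\pm1$, not $2$.

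For the mod-$3$ condition, suppose $\Phi_m$ divides $f$ modulo $3$. Since $\Phi_m$ is monic, this is exactly $f\equiv0\pmod{3,\Phi_m}$. The $3\mid q$ case of Proposition~\ref{prop:cyclotomic-congruence} then gives three colors, a contradiction. For the eighth-root condition, $f(1)=F(2)=\epsilon$ is odd, so Lemma~\ref{lem:eighth-root-norm} applies. If $D_8>1$ and $D_8\neq7$, that lemma gives $\chi_c\leq3$ and hence three colors, a contradiction. Therefore $D_8\in\{1,7\}$. The case $D_8=0$ cannot occur, since $A^2=2B^2$ forces $A=B=0$, whereas $A$ is odd.

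The only delicate step is justifying the divisibility $y(y^2-4)\mid F-\epsilon$ in $\Z[y]$ rather than only in $\Q[y]$. This follows from Gauss's lemma, or directly from the fact that dividing by a monic polynomial keeps integer coefficients. The rest is a bookkeeping assembly of the earlier lemmas.
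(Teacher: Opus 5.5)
Your proposal is correct and follows the paper's proof essentially step for step. Theorem~\ref{thm:algebraic-noninteger} forces $a_d=2$, and reciprocity, the elementary evaluations and Lemma~\ref{lem:trace-evaluation-signs} give the trace form. The Cyclotomic Congruence Criterion with $q=3$ and Lemma~\ref{lem:eighth-root-norm} then give the last two restrictions. The only difference is how you rule out $D_8=0$. You use the parity of $A\equiv F(2)\pmod 2$ and the irrationality of $\sqrt2$, whereas the paper observes that $F(\sqrt2)=0$ would make an eighth root of unity a root of $f$; both arguments are valid.
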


\begin{proof}
    Theorem~\ref{thm:algebraic-noninteger} gives $\chi(\Gamma_\alpha)\leq4$, with three colors when $a_d\geq3$.
    Hence $\chi(\Gamma_\alpha)=4$ and $a_d=2$.
    Reciprocity gives $a_0=2$, and $F$ has leading coefficient two.
    None of $F(-2),F(0),F(2)$ vanishes, since $\alpha$ is not a root of unity.
    An absolute value greater than one would give a three-coloring by Lemma~\ref{lem:elementary-evaluation}.
    Lemma~\ref{lem:trace-evaluation-signs} therefore makes all three values equal to a sign $s$.
    Division of $F-s$ by the monic polynomial $y(y^2-4)$ gives the stated form with $Q\in\Z[y]$.
    Since $F$ has positive degree and leading coefficient two, $Q$ is nonzero and has leading coefficient two.

    If $\Phi_m\mid f$ modulo three, Proposition~\ref{prop:cyclotomic-congruence} would give a three-coloring.
    Here divisibility is by the whole reduction of $\Phi_m$, including its multiplicities.
    Finally, $F(\sqrt2)\ne0$, since otherwise $f$ would have an eighth root of unity as a root.
    Thus $D_8$ is a positive integer, and $f(1)=F(2)=s$ is odd.
    Lemma~\ref{lem:eighth-root-norm} gives three colors unless $D_8=1$ or $7$.
\end{proof}

\begin{corollary}
    \label{cor:noninteger-counterexample-degree}
    Any counterexample to Conjecture~\ref{conj:noninteger-three-color} has degree at least ten over $\Q$.
    In particular, every algebraic noninteger direction parameter of degree at most eight gives a three-colorable graph.
\end{corollary}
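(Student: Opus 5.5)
The plan is to read the degree off the structure in Proposition~\ref{prop:noninteger-counterexample-restrictions} and rule out the two smallest cases using only the eighth-root norm condition. Let $\alpha$ be a counterexample. By that proposition, its trace polynomial is
\[F(y)=y(y^2-4)Q(y)+s,\qquad s\in\{1,-1\},\]
where $Q\in\Z[y]$ is nonzero with leading coefficient two, and $D_8\in\{1,7\}$. Hence $r=\deg F=3+\deg Q\geq3$, and the degree of $\alpha$ is $d=2r\geq6$. Since $d$ is even, it suffices to exclude $\deg Q=0$ (degree six) and $\deg Q=1$ (degree eight). The degree-ten example $x^{10}-x^6+x^5-x^4+1$ from Subsection~\ref{subsec:cyclotomic-quotients} is monic, so it plays no role here.

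In both cases I will reduce $F$ modulo $y^2-2$ using $y^3\equiv2y$ and $y^4\equiv4$. This gives $y(y^2-4)\equiv-2y$.

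\begin{itemize}
\item If $Q=2$, then $F\equiv s-4y$. Thus $D_8=|1-32|=31$, which matches the sextic example in Subsection~\ref{subsec:leading-coefficient-two} and lies outside $\{1,7\}$.
\item If $Q=2y+c$ with $c\in\Z$, then $F\equiv-2y(2y+c)+s\equiv(s-8)-2cy$. So $A\in\{-7,-9\}$, $B=-2c$, and $D_8=|A^2-8c^2|$.
\end{itemize}

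The only step needing care is the finite Diophantine check in the second case, and it is elementary. Requiring $A^2-8c^2\in\{\pm1,\pm7\}$ means $8c^2$ must lie in
\[\{48,50,42,56\}\quad\text{when }A^2=49,\qquad \{80,82,74,88\}\quad\text{when }A^2=81.\]
The only multiples of eight in these sets are $48,56,80,88$. They give $c^2=6,7,10,11$, none of which is a perfect square. Therefore $D_8\notin\{1,7\}$ in every case, contradicting the proposition, so $\deg Q\geq2$ and $d\geq10$.

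For the final sentence of the corollary, let $\alpha$ be an algebraic noninteger parameter of degree at most eight. If $\alpha$ is a root of unity, Theorem~\ref{thm:cyclotomic} already gives $\chi(\Gamma_\alpha)\leq3$; note that roots of unity are algebraic integers anyway. Otherwise $\alpha$ satisfies the standing hypotheses of Section~\ref{sec:noncyclotomic-algebraic}. A value $\chi(\Gamma_\alpha)>3$ would make $\alpha$ a counterexample of degree below ten, which we have just ruled out, so $\chi(\Gamma_\alpha)\leq3$.
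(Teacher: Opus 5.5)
Your proposal is correct and follows essentially the same route as the paper: both invoke Proposition~\ref{prop:noninteger-counterexample-restrictions}, exclude $Q=2$ via $D_8=31$, and exclude $Q=2y+c$ by reducing the condition $D_8\in\{1,7\}$ to $c^2\in\{6,7,10,11\}$. The only cosmetic difference is that the paper filters the norm equation by observing that the signed norm is congruent to $1$ modulo $8$, so it must equal $1$ or $-7$, whereas you enumerate the candidate values of $8c^2$ directly.
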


\begin{proof}
    A counterexample has $F(y)=y(y^2-4)Q(y)\pm1$ as in Proposition~\ref{prop:noninteger-counterexample-restrictions}.
    If $Q=2$, the worked sextic gives $D_8=31$; changing the constant term to $-1$ leaves the norm $|1-32|$ unchanged.
    This contradicts $D_8\in\{1,7\}$.
    If $Q(y)=2y+c$ for $c\in\Z$, then $F(y)\equiv(\pm1-8)-2cy\pmod{y^2-2}$.
    The signed norm is $49-8c^2$ or $81-8c^2$, each congruent to one modulo eight.
    Its absolute value can belong to $\{1,7\}$ only if the signed norm is $1$ or $-7$.
    This would require $c^2\in\{6,7,10,11\}$, none of which is a square.
    Thus $\deg Q\geq2$, so $\deg F\geq5$ and $\deg f\geq10$.
\end{proof}

These restrictions are necessary, not sufficient: a surviving parameter may admit a three-coloring by another construction.
We next ask what changes when the leading coefficient is one.

\subsection{Algebraic Integer Parameters}
\label{sec:algebraic-integers}
\label{subsec:sextic-character}

For an algebraic integer, the minimal polynomial is monic with constant coefficient one.
Each shifted relation now determines the next character value uniquely, so the extension argument cannot keep choosing values in a prescribed arc.
The evaluation methods still give colorings, and we propose the following bound.

\begin{conjecture}
    \label{conj:four-color}
    For every $\alpha\in\C$ with $|\alpha|=1$, we have 
    \[\chi(\Gamma_\alpha)\leq4.\]
\end{conjecture}

Section~\ref{sec:transcendental-cyclotomic} and Theorem~\ref{thm:algebraic-noninteger} leave only noncyclotomic algebraic integers unresolved.
To find a graph requiring four colors, we must first escape the three elementary evaluations.
Lemma~\ref{lem:trace-evaluation-signs} then forces $F(y)=y(y^2-4)Q(y)\pm1$.
The first possible monic trace degree is three, where $Q=1$.
Choose $F_0(y)=y^3-4y-1$, giving
\[f_0(x)=x^3F_0(x+x^{-1})=x^6-x^4-x^3-x^2+1.\]
This is the monic counterpart of the preceding sextic: all three elementary trace values are again units, now equal to $-1$.

The cubic $F_0$ has no rational root and changes sign on each of $(-2,-1)$, $(-1,0)$, and $(2,3)$.
Solving $x+x^{-1}=y$ for its two roots in $(-2,2)$ gives two conjugate pairs on the unit circle.
The third root gives a positive real reciprocal pair, as Figure~\ref{fig:sextic-roots} illustrates.

\begin{figure}[htbp]
    \centering
    \input{tikz-code/sextic-roots-noncyclotomic.tex}
    \caption{From the three roots of $F_0(y)=y^3-4y-1$ to the six roots of $f_0$.
    Each trace root lifts through $x^2-yx+1=0$ to a reciprocal pair.
    The two roots in $(-2,2)$ give unit-circle pairs; the root greater than two gives $\gamma$ and $1/\gamma$ on the real axis.}
    \label{fig:sextic-roots}
\end{figure}

As in the preceding sextic, a unit-modulus root is nonreal and quadratic over a real cubic field, so its degree is six and $f_0$ is its minimal polynomial.
The real root $\gamma>1$ then excludes roots of unity.
This $\gamma$ is the smallest degree-six Salem number~\cite[Section~7, Example~1]{GrossMcMullen2002}.

The eighth-root remainder is
\[F_0(y)\equiv-1-2y\pmod{y^2-2},\]
giving the exceptional norm $D_8=|1-2\cdot4|=7$ from Lemma~\ref{lem:eighth-root-norm}.
Here $S_8\cong\Z/7\Z$ with $[\sqrt2]=3$.
Define $h:R_8(f_0)\to\Z/7\Z$ by
\[h\bigl(a+b[\zeta_8]\bigr)=2a+2b\pmod7.\]
The relation $[\zeta_8]^2=3[\zeta_8]-1$ gives the period-eight root labels $(2,2,4,3,5,5,3,4)$, and hence
\[\Gamma_\alpha\longrightarrow\Gamma_8(f_0)\longrightarrow K_{7/2}.\]
Dividing by seven gives a character $\psi$ with separation $2/7$.
Figure~\ref{fig:period-eight-comparison} compares this character with the norm-$31$ construction.
To prove that a fourth color is necessary, we still need a finite obstruction.

\begin{figure}[htbp]
    \centering
    \input{tikz-code/period-eight-comparison.tex}
    \caption{The period-eight character values $u_n=\psi(\alpha^n)$ for the two sextic examples, on the same scale.
    The shaded band is $[1/3,2/3]$; green points lie inside it and red points lie outside.
    For norm $31$, every step value lies in this band and the separation is $12/31$.
    For norm $7$, the separation is $2/7$.
    The second character supplies a four-coloring; failure of three-colorability is established separately by the witness.}
    \label{fig:period-eight-comparison}
    \label{fig:noninteger-period-eight}
\end{figure}

The witness answers the one-parameter question of Cervantes and Krebs~\cite[Example~2.14]{CervantesKrebs2023}.
The spindle in the introduction used two parameters to force four colors; here the powers of one parameter suffice.

\begin{theorem}
    \label{thm:sextic-four-chromatic}
    If $\alpha$ is a unit-modulus root of $x^6-x^4-x^3-x^2+1$, then $\chi(\Gamma_\alpha)=4$ and $\chi_c(\Gamma_\alpha)\leq7/2$.
\end{theorem}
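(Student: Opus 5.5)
The proof splits into an upper bound, obtained from the eighth-root quotient, and a lower bound, obtained from a finite witness.

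\emph{Upper bound.} Let $f_0(x)=x^6-x^4-x^3-x^2+1$ with trace polynomial $F_0(y)=y^3-4y-1$, and let $\zeta=\zeta_8$.

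1. Since $f_0(\zeta)=\zeta^3F_0(\sqrt2)$, evaluation gives a graph homomorphism $\Gamma_\alpha\to\Gamma_8(f_0)$ onto the quotient $R_8(f_0)=\mathcal O_8/(-1-2\sqrt2)\mathcal O_8$.
2. The real coordinate ring is $S_8\cong\Z/7\Z$ with $[\sqrt2]=3$. To check this, note $2\cdot3=6\equiv-1$, so $1+2[\sqrt2]=0$, and $3^2=9\equiv2$.
3. The map $h(a+b[\zeta])=2a+2b\pmod7$ is additive on $S_8^2$. Iterating $v_{j+1}=3v_j-v_{j-1}$ from $v_0=(1,0)$ and $v_1=(0,1)$ gives the coordinate vectors of $[\zeta^j]$.
4. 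Applying $h$ to these vectors gives the labels $(2,2,4,3,5,5,3,4)$. All lie in $\{2,3,4,5\}$, and $\zeta^8=1$ makes them periodic.
5. Composing with $j\mapsto j/7$ gives a character with separation $2/7$. Lemma~\ref{lem:character-separation} then yields $\chi_c(\Gamma_\alpha)\le7/2$ and hence $\chi(\Gamma_\alpha)\le4$.

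Alternatively, Lemma~\ref{lem:eighth-root-norm} applies directly: $f_0(1)=-1$ is odd and $D_8=7$. Its proof, after normalizing the generator to $a+b\sqrt2$ with $a\ge2b\ge0$ (here $3+\sqrt2$ up to units and conjugation), gives the same bound.

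\emph{Lower bound.} We need a finite subgraph of $\Gamma_\alpha$ that is not three-colorable. Following the introduction, take the $24$-vertex, $51$-edge witness found by the SAT search. Its vertices are sums of selected powers among $1,\alpha,\dots,\alpha^9$, and each is recorded as an integer polynomial $p_v$ of degree at most $9$.

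1. For each claimed edge $uv$, reduce $p_u-p_v$ modulo $f_0$, which is monic so the reduction stays in $\Z[x]$. Check that the remainder equals $\pm x^k\bmod f_0$ for some $k\ge0$. Since $\Z[\alpha]\cong\Z[x]/(f_0)$, this certifies that the difference is an allowed step.
2. Check that the $24$ reduced remainders are pairwise distinct, which shows the vertices are distinct.
3. Certify non-three-colorability by exhaustive backtracking. Fix the colors of one triangle, or of one edge if there is no triangle, and propagate. Alternatively, isolate nested spindle-like gadgets that force two nonadjacent vertices to share a color and then join them by an edge, as in the Moser spindle argument.

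Finally, combine the two bounds. The three-coloring lower bound rules out $\chi(\Gamma_\alpha)\le3$, and the character gives $\chi(\Gamma_\alpha)\le4$ together with $\chi_c(\Gamma_\alpha)\le7/2$. Since $\alpha$ is any unit-modulus root, the reductions depend only on $f_0$, and the argument covers both conjugate pairs simultaneously.

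The main obstacle is the lower bound. No earlier result in the paper forces a fourth color, so the statement rests entirely on exhibiting the explicit witness and verifying it. A human-readable version would require finding a structural reason, such as chained forced-equality gadgets built from the relation $f_0(\alpha)=0$. Failing that, one reports the exact edge certificates and the exhaustive search as a computer-checkable proof.
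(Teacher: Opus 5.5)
Your proposal is correct and follows the paper's proof. The upper bound is the same eighth-root character $h(a+b[\zeta_8])=2a+2b\pmod 7$ with root labels $(2,2,4,3,5,5,3,4)$, equivalently Lemma~\ref{lem:eighth-root-norm} with $D_8=7$, and the lower bound is the same computer-certified $24$-vertex, $51$-edge witness of Table~\ref{tab:sextic-witness-vertices}, checked by exact reduction modulo $f_0$ and exhaustive backtracking (your distinctness check is harmless but not strictly needed, since sending the witness into $\Gamma_\alpha$ with every edge mapped to a nonzero allowed step is already a graph homomorphism).
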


\begin{proof}
    The character above gives the upper bounds.
    Let $H$ have the vertices in Table~\ref{tab:sextic-witness-vertices}, joining $v_i$ and $v_j$ precisely when $v_j-v_i\in\{\pm\alpha^k:0\leq k\leq12\}$.
    Exact polynomial reduction gives $24$ distinct vertices and $51$ edges, each an allowed step of $\Gamma_\alpha$.
    Exhaustive backtracking finds no proper three-coloring of $H$, as verified by the accompanying program described below.
    Thus $\chi(\Gamma_\alpha)\geq\chi(H)>3$.
    These checks use only the relation $f_0(\alpha)=0$ and the independence of $1,\alpha,\ldots,\alpha^5$, so the result holds for every unit-modulus root of $f_0$.
\end{proof}

We found $H$ by searching the finite sumset
\[B=\{0,1\}+\{0,\alpha\}+\cdots+\{0,\alpha^9\}.\]
Different selections of powers can represent the same vertex; for example, $1+\alpha^6=\alpha^2+\alpha^3+\alpha^4$.
Reducing the $2^{10}$ selections modulo $f_0$ leaves $830$ distinct vertices.
For the search graph we used directions $\pm\alpha^k$ through exponent $14$, giving $4{,}848$ edges.
A SAT search found no three-coloring and first isolated a $90$-vertex obstruction.
Further extraction and vertex deletion produced $H$, which is vertex-critical: deleting any one vertex makes it three-colorable.
Figure~\ref{fig:sextic-coloring} in the introduction shows its position in the search graph.

For the exact description, we translated one retained vertex to the origin.
Each row of Table~\ref{tab:sextic-witness-vertices} gives the six coefficients of $v_j$ in the basis $1,\alpha,\ldots,\alpha^5$.
The witness uses the directions through exponent $12$ specified in the proof, independently of the larger search cutoff.
For $v=c_0+c_1\alpha+\cdots+c_5\alpha^5$, its character label is
\[\ell(v)=2c_0+2c_1+4c_2+3c_3+5c_4+5c_5\pmod7,\]
with $\psi(v)=\ell(v)/7$.
Assign red, blue, green, and yellow to the label classes $\{0,1\}$, $\{2,3\}$, $\{4,5\}$, and $\{6\}$, respectively.
Figure~\ref{fig:sextic-witness} shows this coloring on the tabulated vertices.

\begin{table}[ht]
    \centering
    \small
    \setlength{\tabcolsep}{3pt}
    \begin{tabular}[t]{|r|rrrrrr|}
        \hline
        $j$ & $c_0$ & $c_1$ & $c_2$ & $c_3$ & $c_4$ & $c_5$\\
        \hline
        0 & 0 & 0 & 0 & 0 & 0 & 0\\
        1 & 0 & 0 & 0 & 1 & 1 & 1\\
        2 & 0 & 0 & 1 & 0 & 0 & 0\\
        3 & 0 & 0 & 1 & 1 & 0 & 0\\
        4 & 0 & 1 & 0 & 0 & 0 & 0\\
        5 & 1 & 0 & 0 & 0 & -1 & 0\\
        6 & 1 & 0 & 0 & 0 & 0 & 0\\
        7 & 1 & 1 & 0 & -1 & -2 & -2\\
        8 & 1 & 1 & 0 & -1 & -2 & -1\\
        9 & 1 & 1 & 0 & -1 & -1 & -1\\
        10 & 1 & 1 & 0 & 0 & -2 & -2\\
        11 & 2 & 1 & -1 & -2 & -3 & -2\\
        \hline
    \end{tabular}%
    \hspace{2em}%
    \begin{tabular}[t]{|r|rrrrrr|}
        \hline
        $j$ & $c_0$ & $c_1$ & $c_2$ & $c_3$ & $c_4$ & $c_5$\\
        \hline
        12 & 2 & 1 & -1 & -2 & -2 & -1\\
        13 & 2 & 1 & -1 & -1 & -3 & -2\\
        14 & 2 & 1 & 0 & -2 & -3 & -2\\
        15 & 2 & 2 & -1 & -3 & -3 & -2\\
        16 & 2 & 2 & -1 & -2 & -4 & -3\\
        17 & 2 & 2 & -1 & -2 & -4 & -2\\
        18 & 2 & 2 & -1 & -2 & -3 & -3\\
        19 & 2 & 2 & -1 & -2 & -3 & -2\\
        20 & 3 & 1 & -1 & -3 & -4 & -2\\
        21 & 3 & 2 & -1 & -4 & -5 & -3\\
        22 & 3 & 2 & -1 & -3 & -5 & -4\\
        23 & 3 & 2 & -1 & -3 & -5 & -3\\
        \hline
    \end{tabular}
    \caption{Exact coefficients of the witness vertices $v_j=\sum_{k=0}^5c_k\alpha^k$.}
    \label{tab:sextic-witness-vertices}
\end{table}

The files in \path{anc/} include \path{sextic-witness.csv}, which records the same coefficients, and the independent verifier \path{verify-sextic-witness.py}.
The program uses only integer arithmetic and the Python standard library.
It reconstructs the edges by reducing powers with $\alpha^6=-1+\alpha^2+\alpha^3+\alpha^4$ and checks the displayed four-coloring.
For the lower bound, it selects an uncolored vertex, tries every color absent from its colored neighbors, and backtracks whenever none is available.
Every available choice is explored, and every branch for three colors fails.
Together with the displayed four-coloring, this verifies that the witness has chromatic number four.
The verification depends on the exact coefficients, not on the numerical coordinates used for drawing.
Whether the circular upper bound $7/2$ is attained remains open.

\begin{figure}[H]
    \centering
    \input{tikz-code/sextic-witness-noncyclotomic.tex}
    \caption{The sextic witness with vertex indices matching Table~\ref{tab:sextic-witness-vertices}.
    The drawing uses the root $\alpha$ with positive imaginary part and $-1<\alpha+\alpha^{-1}<0$.
    Colors come from the label classes of $\ell$.
    Crossings without dots are not vertices.}
    \label{fig:sextic-witness}
    \label{fig:sextic-extraction}
\end{figure}

\subsection{Four Colors Through Degree Eight}
\label{subsec:degree-eight}

The same trace evaluations explain why a graph requiring more than four colors must occur in higher degree.
Recall the degree-ten example from Subsection~\ref{subsec:trace-quotients}, whose trace polynomial is
\[F(y)=y(y^2-1)(y^2-4)+1.\]
Its value is one at all five integer evaluation points $-2,-1,0,1,2$.
The five intersections in Figure~\ref{fig:five-trace-evaluations} are the roots of $F-1$.

\begin{figure}[htbp]
    \centering
    \input{tikz-code/five-trace-evaluations.tex}
    \caption{Five failed integer evaluation tests for $F(y)=y(y^2-1)(y^2-4)+1$.
    The five intersections with the line of height one are distinct roots of $F-1$.
    A nonconstant trace polynomial of degree at most four cannot have this property.}
    \label{fig:five-trace-evaluations}
\end{figure}

\begin{theorem}
    \label{thm:degree-eight-bound}
    If $\alpha$ is an algebraic direction parameter with $\deg\alpha\leq8$, then 
    \[\chi(\Gamma_\alpha)\leq4.\]
\end{theorem}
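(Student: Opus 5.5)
The plan is to reduce to the case of a noncyclotomic algebraic integer and then use the five integer trace evaluations, as Figure~\ref{fig:five-trace-evaluations} suggests. Transcendental parameters are excluded by hypothesis, and roots of unity are handled by Theorem~\ref{thm:cyclotomic}, which gives at most three colors. For a noncyclotomic algebraic noninteger, Theorem~\ref{thm:algebraic-noninteger} already gives four colors. So we may assume that $\alpha$ is a noncyclotomic algebraic integer with monic reciprocal minimal polynomial $f$ of degree $d=2r\leq8$. Its trace polynomial $F$ is then monic of degree $r\leq4$.

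Next we test the five trace evaluations $F(-2),F(-1),F(0),F(1),F(2)$. None of them vanishes, since each corresponds to evaluating $f$ at a root of unity of order $1,2,3,4,6$, and $f$ is irreducible and not cyclotomic. Suppose one of $F(-2),F(0),F(2)$ has absolute value at least two. Then Lemma~\ref{lem:elementary-evaluation} gives three colors. Suppose instead that $F(1)$ or $F(-1)$ has absolute value at least two, and take $q=|F(\pm1)|\geq2$. Then $q$ divides $F(t_m)$ for $m=6$ or $m=3$, and the identity $f(\zeta_m)=\zeta_m^rF(t_m)$ gives $f(\zeta_m)\in q\mathcal O_m$. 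The Cyclotomic Congruence Criterion (Proposition~\ref{prop:cyclotomic-congruence}), as recorded in the trace table, then yields a homomorphism to $E_q$ and four colors.

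The remaining case is the one where all five values lie in $\{1,-1\}$. By Lemma~\ref{lem:trace-evaluation-signs}, all five are then equal to a common sign $s$. Hence $F-s$ has the five distinct roots $-2,-1,0,1,2$. But $F-s$ is monic of degree $r\leq4$, so it is nonzero, and a nonzero polynomial of degree at most four cannot have five roots. This contradiction shows that one of the earlier cases always applies, and we conclude $\chi(\Gamma_\alpha)\leq4$.

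The main point to check carefully is the $\zeta_3$ and $\zeta_6$ rows: that divisibility of the integer $F(\pm1)$ by $q$ really yields the congruence $f\equiv0\pmod{q,\Phi_m}$. This follows because $\zeta_m^r$ is a unit and $t_3=-1$, $t_6=1$ are integers, so $F(t_m)=F(\mp1)$ is an integer divisible by $q$. The rest is bookkeeping. In particular, the degree-one case $r\leq1$, with $F=y+c$, is subsumed by the same argument.
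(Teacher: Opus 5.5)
Your proposal is correct and follows essentially the same route as the paper. Roots of unity and nonintegers are disposed of by the earlier theorems; any of the five integer trace evaluations with absolute value at least two gives at most four colors through the elementary and Eisenstein rows of the trace table; otherwise the sign lemma makes $F-s$ a nonzero polynomial of degree at most four with five roots. Your explicit check of the $\zeta_3$ and $\zeta_6$ rows only spells out what the paper cites from that table.
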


\begin{proof}
    Roots of unity and algebraic nonintegers are covered by Theorems~\ref{thm:cyclotomic} and~\ref{thm:algebraic-noninteger}.
    Suppose that $\alpha$ is a noncyclotomic algebraic integer and $\chi(\Gamma_\alpha)>4$.
    Its monic trace polynomial $F$ has positive degree at most four.
    Each of $F(-2),F(-1),F(0),F(1),F(2)$ is nonzero, and an absolute value greater than one would give at most four colors by the trace evaluations in Subsection~\ref{subsec:trace-quotients}.
    Lemma~\ref{lem:trace-evaluation-signs} therefore makes all five values equal to a sign $s$.
    The nonzero polynomial $F-s$ has five distinct roots, contradicting its degree bound.
\end{proof}

\begin{corollary}
    \label{thm:degree-six-bound}
    Let $\alpha$ be an algebraic direction parameter of degree at most six, with primitive integer minimal polynomial $f$ normalized to have positive leading coefficient.
    With $f_0(x)=x^6-x^4-x^3-x^2+1$, we have
    \[\chi(\Gamma_\alpha)=
    \begin{cases}
        4,&f(x)=f_0(x)\text{ or }f(x)=f_0(-x),\\
        2,&f(1)\text{ is even},\\
        3,&\text{otherwise}.
    \end{cases}\]
\end{corollary}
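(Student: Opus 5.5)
We sort the parameters of degree at most six by algebraic type. In each case the upper bound comes from a result already proved, and the lower bound from the bipartite criterion or the sextic witness. The degree-one parameters are $\pm1$. Cyclotomic parameters of degree at most six are handled by Theorem~\ref{thm:cyclotomic}: the order is a power of two exactly when $\Phi_n(1)$ is even, since $\Phi_n(1)=2$ for $n=2^k\geq2$, $\Phi_1(1)=0$, and otherwise $\Phi_n(1)$ is $1$ or an odd prime. So that theorem matches the last two cases, and $f_0$ and $f_0(-x)$ are not cyclotomic. For every other parameter, Lemma~\ref{lem:bipartite-criterion} shows that $\chi(\Gamma_\alpha)=2$ exactly when $f(1)$ is even. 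It also shows $\chi\geq3$ when $f(1)$ is odd. The task is therefore to prove $\chi\leq3$ for every noncyclotomic parameter except $f_0(\pm x)$, and $\chi=4$ for those two.

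\textbf{Nonintegers.} If $a_d\geq3$, Corollary~\ref{cor:algebraic-partial-classification} applies. If $a_d=2$, Corollary~\ref{cor:noninteger-counterexample-degree} gives three colors, since a counterexample needs degree at least ten.

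\textbf{Algebraic integers.} Here $F$ is monic of degree $r\leq3$. If some value $F(-2),F(0),F(2)$ has absolute value greater than one, Lemma~\ref{lem:elementary-evaluation} gives three colors. None of these values is zero, because $\alpha$ is not a root of unity. Otherwise Lemma~\ref{lem:trace-evaluation-signs} makes them all equal to a sign $s$, so $y(y^2-4)$ divides $F-s$. With $F$ monic of degree at most three, this forces $F=y^3-4y+s$. For $s=-1$ we get $f_0$. For $s=+1$ we get $F(y)=-F_0(-y)$, hence $f(x)=f_0(-x)$. Thus every other algebraic integer of degree at most six is three-colorable, or two-colorable when $f(1)$ is even.

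\textbf{The two exceptional polynomials.} For $f_0$, Theorem~\ref{thm:sextic-four-chromatic} gives $\chi=4$. For $f(x)=f_0(-x)$ the parameter is $-\alpha$ with $\alpha$ a root of $f_0$. Negation $z\mapsto -z$ carries $\Z[\alpha]$ onto $\Z[-\alpha]$, which is the same ring. It also carries the step set $\{\pm\alpha^n\}$ onto $\{\pm(-\alpha)^n\}$, which is the same set. Hence $\Gamma_{-\alpha}=\Gamma_\alpha$ and its chromatic number is also $4$.

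The main obstacle is the bookkeeping rather than any new idea. One must check that the degree bound really pins $F$ down to exactly these two trace polynomials, including the small degrees: $r=1,2$ cannot have three roots of $F-s$. One must also confirm that $f_0(\pm x)$ have odd $f(1)$, namely $f_0(1)=-1$ and $f_0(-1)=1$, so the case list is consistent.
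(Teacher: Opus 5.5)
Your proof is correct and follows essentially the same route as the paper. The elementary trace evaluations together with Lemma~\ref{lem:trace-evaluation-signs} force a monic trace polynomial that escapes them to be $y^3-4y\pm1$; Theorem~\ref{thm:sextic-four-chromatic} with $\Gamma_{-\alpha}=\Gamma_\alpha$ handles $f_0(\pm x)$, Corollary~\ref{cor:noninteger-counterexample-degree} covers the nonintegers, and the bipartite criterion separates two colors from three. Your separate $\Phi_n(1)$ check in the cyclotomic case is harmless but unnecessary, since Lemma~\ref{lem:bipartite-criterion} already applies to every algebraic parameter.
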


\begin{proof}
    For a noncyclotomic algebraic integer in this range, $F$ is monic of positive degree at most three.
    If the evaluations at $-2,0,2$ all fail to give three colors, Lemma~\ref{lem:trace-evaluation-signs} gives $F(y)=y(y^2-4)Q(y)\pm1$.
    Monicity and the degree bound force $Q=1$, recovering $F(y)=y^3-4y\pm1$ and hence $f=f_0$ or $f(x)=f_0(-x)$.
    These graphs have chromatic number four by Theorem~\ref{thm:sextic-four-chromatic}, since $\Gamma_{-\alpha}=\Gamma_\alpha$.
    Every other algebraic integer in this degree range admits three colors by the elementary evaluations or Theorem~\ref{thm:cyclotomic}, and Corollary~\ref{cor:noninteger-counterexample-degree} treats the algebraic nonintegers.
    The bipartite criterion distinguishes two colors from three according to the parity of $f(1)$.
\end{proof}

A counterexample to Conjecture~\ref{conj:four-color} must therefore be a noncyclotomic algebraic integer of degree at least ten, with
\[F(y)=y(y^2-1)(y^2-4)Q(y)\pm1\]
for some nonzero monic $Q\in\Z[y]$.
It must also have $D_8=1$ and avoid every congruence $\Phi_m\mid f\pmod q$ with $q\geq2$, since the eighth-root and Cyclotomic Congruence Criteria would otherwise give four colors.
The general norm criterion (Theorem~\ref{thm:cyclotomic-norm}) further requires $D_m\leq B_m$ for each fixed $m\geq3$.
These are necessary restrictions, not evidence that a surviving parameter needs a fifth color.
The open problem is to find a coloring for every remaining parameter.
Cyclotomic evaluation offers one approach: choose an order for which the resulting finite quotient admits four colors.

%% file: tikz-code/semiminimal-extension.tex
\begin{tikzpicture}[
    line cap=round,
    line join=round,
    font=\small,
    candidate label/.style={font=\scriptsize,inner sep=1pt},
    boundary tick/.style={draw=googleBlue!80!black,line width=0.65pt}
]
    \definecolor{googleBlue}{HTML}{4285F4}
    \definecolor{googleGreen}{HTML}{34A853}

    \def\circleradius{1.40}
    \def\labelradius{1.78}
    \def\secondx{5.20}
    \pgfmathsetmacro{\arcstart}{90+360*3/7}
    \pgfmathsetmacro{\arcend}{90+360*4/7}

    \foreach \centerx in {0,\secondx} {
        \begin{scope}[shift={(\centerx,0)}]
            \draw[googleBlue!30,line width=7pt,line cap=butt]
                (\arcstart:\circleradius)
                arc[start angle=\arcstart,end angle=\arcend,
                    radius=\circleradius];
            \draw[black!45,line width=0.5pt]
                (0,0) circle[radius=\circleradius];
            \foreach \endpoint in {\arcstart,\arcend} {
                \draw[boundary tick]
                    (\endpoint:1.31) -- (\endpoint:1.49);
            }
        \end{scope}
    }

    \node at (0,2.18) {$\psi(\alpha^2)$};
    \foreach \j in {0,...,6} {
        \pgfmathsetmacro{\candidateangle}{90+360*\j/7}
        \coordinate (L\j) at (\candidateangle:\circleradius);
        \ifnum\j=3\relax
            \filldraw[fill=googleGreen,draw=white,line width=0.5pt]
                (L\j) circle[radius=3.1pt];
        \else
            \ifnum\j=4\relax
                \filldraw[fill=white,draw=googleGreen,line width=1pt]
                    (L\j) circle[radius=3.1pt];
            \else
                \filldraw[fill=black!75,draw=white,line width=0.4pt]
                    (L\j) circle[radius=2.4pt];
            \fi
        \fi
        \ifnum\j=0\relax
            \node[candidate label] at (\candidateangle:\labelradius) {$0$};
        \else
            \node[candidate label] at (\candidateangle:\labelradius)
                {$\j/7$};
        \fi
    }

    \begin{scope}[shift={(\secondx,0)}]
        \node at (0,2.18) {$\psi(\alpha^3)$};
        \draw[black!50,line width=0.5pt] (90:1.34) -- (90:1.46);
        \node[candidate label,text=black!65] at (90:1.15) {$0$};
        \foreach \j/\numerator in {0/13,1/27,2/41,3/55,4/69,5/83,6/97} {
            \pgfmathsetmacro{\candidateangle}{90+360*(\numerator/98)}
            \coordinate (R\j) at (\candidateangle:\circleradius);
            \ifnum\j=3\relax
                \filldraw[fill=googleGreen,draw=white,line width=0.5pt]
                    (R\j) circle[radius=3.1pt];
            \else
                \filldraw[fill=black!75,draw=white,line width=0.4pt]
                    (R\j) circle[radius=2.4pt];
            \fi
            \def\candidatelabelradius{\labelradius}
            \ifnum\j=1\relax\def\candidatelabelradius{2.05}\fi
            \ifnum\j=4\relax\def\candidatelabelradius{2.05}\fi
            \node[candidate label] at (\candidateangle:\candidatelabelradius)
                {$\numerator/98$};
        }
    \end{scope}
\end{tikzpicture}

%% file: tikz-code/sextic-roots-noncyclotomic.tex
\begin{tikzpicture}[line cap=round,line join=round,font=\small]
    \definecolor{googleBlue}{HTML}{4285F4}
    \definecolor{googleGreen}{HTML}{34A853}
    \definecolor{googleRed}{HTML}{EA4335}

    \begin{scope}[x=1.65cm,y=1cm,shift={(0,4.05)}]
        \draw[black!13,line width=9pt,line cap=butt] (-2,0) -- (2,0);
        \draw[->,black!65] (-2.45,0) -- (2.65,0) node[right] {$y$};
        \foreach \x in {-2,0,2} {
            \draw[black!65] (\x,-0.07) -- (\x,0.07);
            \node[below=5pt,font=\scriptsize] at (\x,0) {$\x$};
        }
        \foreach \x/\name/\pointcolor in {-1.8608058531/1/googleBlue,-0.2541016884/2/googleGreen,2.1149075415/3/googleRed} {
            \filldraw[fill=\pointcolor,draw=white,line width=0.5pt] (\x,0) circle[radius=3pt];
            \node[above=6pt] at (\x,0) {$y_{\name}$};
        }
        \node at (0,0.83) {Roots of $F_0(y)$};
    \end{scope}

    \draw[->,thick,black!65] (0,3.33) -- (0,2.57)
        node[midway,right=7pt] {$x^2-yx+1=0$};

    \begin{scope}[x=1.45cm,y=1.45cm]
        \draw[->,black!65] (-1.6,0) -- (1.85,0) node[right] {$\operatorname{Re}x$};
        \draw[->,black!65] (0,-1.28) -- (0,1.3) node[above] {$\operatorname{Im}x$};
        \draw[dotted,thick,black!50] (0,0) circle[radius=1];
        \node[below left,font=\scriptsize] at (0,0) {$0$};
        \foreach \sign in {-1,1} {
            \filldraw[fill=googleBlue,draw=white,line width=0.5pt]
                (-0.9304029266,{\sign*0.3665383940}) circle[radius=3pt];
            \filldraw[fill=googleGreen,draw=white,line width=0.5pt]
                (-0.1270508442,{\sign*0.9918962058}) circle[radius=3pt];
        }
        \node[above left,font=\scriptsize] at (-0.9304029266,0.3665383940) {$\beta$};
        \node[below left,font=\scriptsize] at (-0.9304029266,-0.3665383940) {$\overline\beta$};
        \node[above left,font=\scriptsize] at (-0.1270508442,0.9918962058) {$\alpha$};
        \node[below left,font=\scriptsize] at (-0.1270508442,-0.9918962058) {$\overline\alpha$};
        \foreach \x/\name in {0.7136391735/{1/\gamma},1.4012683679/\gamma} {
            \filldraw[fill=googleRed,draw=white,line width=0.5pt] (\x,0) circle[radius=3pt];
            \node[above=5pt,font=\scriptsize] at (\x,0) {$\name$};
        }
    \end{scope}

    \node[anchor=west,font=\scriptsize,align=left] at (2.55,1.06)
        {$y_1=\beta+\overline\beta$\\[4pt]
         $y_2=\alpha+\overline\alpha$\\[4pt]
         $y_3=\gamma+1/\gamma$};
\end{tikzpicture}

%% file: tikz-code/period-eight-comparison.tex
\begin{tikzpicture}[
    x=0.46cm,y=4cm,
    line cap=round,line join=round,
    font=\small,
    interval boundary/.style={draw=googleBlue!75!black,densely dashed,line width=0.5pt}
]
    \definecolor{googleBlue}{HTML}{4285F4}
    \definecolor{googleGreen}{HTML}{34A853}
    \definecolor{googleRed}{HTML}{EA4335}

    \foreach \offset/\normvalue in {0/31,11.5/7} {
        \begin{scope}[shift={(\offset,0)}]
            \fill[googleBlue!10] (-0.35,{1/3}) rectangle (7.55,{2/3});
            \draw[interval boundary] (-0.35,{1/3}) -- (7.55,{1/3});
            \draw[interval boundary] (-0.35,{2/3}) -- (7.55,{2/3});
            \draw[->,black!70] (-0.35,0) -- (8.0,0) node[right] {$n$};
            \draw[->,black!70] (-0.35,0) -- (-0.35,1.09) node[above] {$u_n$};
            \foreach \height/\ticklabel in {0/0,{1/3}/{\frac13},{2/3}/{\frac23},1/1} {
                \draw[black!70] (-0.46,\height) -- (-0.24,\height);
                \node[left=3pt,font=\scriptsize] at (-0.46,\height) {$\ticklabel$};
            }
            \foreach \n in {0,...,7} {
                \draw[black!70] (\n,0) -- (\n,-0.014);
                \node[below=3pt,font=\scriptsize] at (\n,0) {$\n$};
            }
            \ifnum\normvalue=31\relax
                \node[align=center] at (3.5,1.27) {Leading coefficient two\\$D_8=31$};
                \def\denominator{31}
                \def\values{0/19,1/16,2/16,3/19,4/12,5/15,6/15,7/12}
                \node[font=\scriptsize] at (3.5,-0.23) {Separation $12/31$};
            \else
                \node[align=center] at (3.5,1.27) {Monic\\$D_8=7$};
                \def\denominator{7}
                \def\values{0/2,1/2,2/4,3/3,4/5,5/5,6/3,7/4}
                \node[font=\scriptsize] at (3.5,-0.23) {Separation $2/7$};
            \fi
            \foreach \n/\numerator in \values {
                \coordinate (V\normvalue-\n) at (\n,{\numerator/\denominator});
                \draw[black!20,densely dotted] (\n,0) -- (V\normvalue-\n);
                \pgfmathtruncatemacro{\outside}{(3*\numerator<\denominator)||(3*\numerator>2*\denominator)}
                \ifnum\outside=1\relax
                    \def\pointcolor{googleRed}
                \else
                    \def\pointcolor{googleGreen}
                \fi
                \filldraw[fill=\pointcolor,draw=white,line width=0.5pt]
                    (V\normvalue-\n) circle[radius=2.5pt];
            }
        \end{scope}
    }
\end{tikzpicture}

%% file: tikz-code/sextic-witness-noncyclotomic.tex
\begin{tikzpicture}[x=2.15cm,y=2.15cm,line cap=round,line join=round,
    graph edge/.style={draw=black!45,line width=0.5pt},
    index label/.style={font=\scriptsize,fill=white,inner sep=0.7pt}]
    \definecolor{googleRed}{HTML}{EA4335}
    \definecolor{googleBlue}{HTML}{4285F4}
    \definecolor{googleGreen}{HTML}{34A853}
    \definecolor{googleYellow}{HTML}{FBBC05}
    \coordinate (v0) at (0.0000000000,0.0000000000); 
    \coordinate (v1) at (0.6511313015,0.3638575539); 
    \coordinate (v2) at (-0.9677161660,-0.2520425006); 
    \coordinate (v3) at (-0.5947670102,-1.1798942814); 
    \coordinate (v4) at (-0.1270508442,0.9918962058); 
    \coordinate (v5) at (0.1270508442,-0.4878112046); 
    \coordinate (v6) at (1.0000000000,0.0000000000); 
    \coordinate (v7) at (-0.0563642913,-0.6636706829); 
    \coordinate (v8) at (-0.6511313015,0.1402274472); 
    \coordinate (v9) at (0.2218178544,0.6280386518); 
    \coordinate (v10) at (0.3165848645,-1.5915224637); 
    \coordinate (v11) at (0.6654535631,0.0284123938); 
    \coordinate (v12) at (0.9436357087,1.3201217286); 
    \coordinate (v13) at (1.0384027189,-0.8994393870); 
    \coordinate (v14) at (-0.3022626029,-0.2236301067); 
    \coordinate (v15) at (0.1654535631,1.9481603804); 
    \coordinate (v16) at (0.2602205732,-0.2714007351); 
    \coordinate (v17) at (-0.3345464369,0.5324973950); 
    \coordinate (v18) at (1.1331697290,0.2164104695); 
    \coordinate (v19) at (0.5384027189,1.0203085996); 
    \coordinate (v20) at (0.4195552514,0.4684529700); 
    \coordinate (v21) at (-0.3586268942,1.0964916219); 
    \coordinate (v22) at (0.6090892718,-0.6352582891); 
    \coordinate (v23) at (0.0143222616,0.1686398410); 
    \draw[graph edge] (v0) -- (v2); 
    \draw[graph edge] (v0) -- (v4); 
    \draw[graph edge] (v0) -- (v6); 
    \draw[graph edge] (v1) -- (v4); 
    \draw[graph edge] (v1) -- (v5); 
    \draw[graph edge] (v1) -- (v12); 
    \draw[graph edge] (v1) -- (v17); 
    \draw[graph edge] (v1) -- (v22); 
    \draw[graph edge] (v2) -- (v3); 
    \draw[graph edge] (v2) -- (v7); 
    \draw[graph edge] (v3) -- (v5); 
    \draw[graph edge] (v3) -- (v10); 
    \draw[graph edge] (v3) -- (v14); 
    \draw[graph edge] (v4) -- (v8); 
    \draw[graph edge] (v4) -- (v15); 
    \draw[graph edge] (v5) -- (v6); 
    \draw[graph edge] (v5) -- (v8); 
    \draw[graph edge] (v5) -- (v13); 
    \draw[graph edge] (v5) -- (v20); 
    \draw[graph edge] (v6) -- (v9); 
    \draw[graph edge] (v6) -- (v23); 
    \draw[graph edge] (v7) -- (v8); 
    \draw[graph edge] (v7) -- (v10); 
    \draw[graph edge] (v7) -- (v11); 
    \draw[graph edge] (v8) -- (v9); 
    \draw[graph edge] (v8) -- (v16); 
    \draw[graph edge] (v8) -- (v21); 
    \draw[graph edge] (v9) -- (v12); 
    \draw[graph edge] (v9) -- (v14); 
    \draw[graph edge] (v9) -- (v18); 
    \draw[graph edge] (v10) -- (v13); 
    \draw[graph edge] (v10) -- (v22); 
    \draw[graph edge] (v11) -- (v13); 
    \draw[graph edge] (v11) -- (v14); 
    \draw[graph edge] (v11) -- (v19); 
    \draw[graph edge] (v12) -- (v15); 
    \draw[graph edge] (v12) -- (v20); 
    \draw[graph edge] (v13) -- (v16); 
    \draw[graph edge] (v14) -- (v20); 
    \draw[graph edge] (v14) -- (v22); 
    \draw[graph edge] (v15) -- (v19); 
    \draw[graph edge] (v15) -- (v21); 
    \draw[graph edge] (v16) -- (v17); 
    \draw[graph edge] (v16) -- (v18); 
    \draw[graph edge] (v17) -- (v19); 
    \draw[graph edge] (v18) -- (v19); 
    \draw[graph edge] (v18) -- (v22); 
    \draw[graph edge] (v19) -- (v23); 
    \draw[graph edge] (v20) -- (v21); 
    \draw[graph edge] (v21) -- (v23); 
    \draw[graph edge] (v22) -- (v23); 
    \filldraw[fill=googleRed,draw=black!80,line width=0.4pt] (v0) circle[radius=2.8pt]; 
    \filldraw[fill=googleYellow,draw=black!80,line width=0.4pt] (v1) circle[radius=2.8pt]; 
    \filldraw[fill=googleGreen,draw=black!80,line width=0.4pt] (v2) circle[radius=2.8pt]; 
    \filldraw[fill=googleRed,draw=black!80,line width=0.4pt] (v3) circle[radius=2.8pt]; 
    \filldraw[fill=googleBlue,draw=black!80,line width=0.4pt] (v4) circle[radius=2.8pt]; 
    \filldraw[fill=googleGreen,draw=black!80,line width=0.4pt] (v5) circle[radius=2.8pt]; 
    \filldraw[fill=googleBlue,draw=black!80,line width=0.4pt] (v6) circle[radius=2.8pt]; 
    \filldraw[fill=googleBlue,draw=black!80,line width=0.4pt] (v7) circle[radius=2.8pt]; 
    \filldraw[fill=googleRed,draw=black!80,line width=0.4pt] (v8) circle[radius=2.8pt]; 
    \filldraw[fill=googleGreen,draw=black!80,line width=0.4pt] (v9) circle[radius=2.8pt]; 
    \filldraw[fill=googleGreen,draw=black!80,line width=0.4pt] (v10) circle[radius=2.8pt]; 
    \filldraw[fill=googleYellow,draw=black!80,line width=0.4pt] (v11) circle[radius=2.8pt]; 
    \filldraw[fill=googleBlue,draw=black!80,line width=0.4pt] (v12) circle[radius=2.8pt]; 
    \filldraw[fill=googleBlue,draw=black!80,line width=0.4pt] (v13) circle[radius=2.8pt]; 
    \filldraw[fill=googleBlue,draw=black!80,line width=0.4pt] (v14) circle[radius=2.8pt]; 
    \filldraw[fill=googleGreen,draw=black!80,line width=0.4pt] (v15) circle[radius=2.8pt]; 
    \filldraw[fill=googleGreen,draw=black!80,line width=0.4pt] (v16) circle[radius=2.8pt]; 
    \filldraw[fill=googleBlue,draw=black!80,line width=0.4pt] (v17) circle[radius=2.8pt]; 
    \filldraw[fill=googleBlue,draw=black!80,line width=0.4pt] (v18) circle[radius=2.8pt]; 
    \filldraw[fill=googleRed,draw=black!80,line width=0.4pt] (v19) circle[radius=2.8pt]; 
    \filldraw[fill=googleRed,draw=black!80,line width=0.4pt] (v20) circle[radius=2.8pt]; 
    \filldraw[fill=googleBlue,draw=black!80,line width=0.4pt] (v21) circle[radius=2.8pt]; 
    \filldraw[fill=googleRed,draw=black!80,line width=0.4pt] (v22) circle[radius=2.8pt]; 
    \filldraw[fill=googleYellow,draw=black!80,line width=0.4pt] (v23) circle[radius=2.8pt]; 
    \node[index label,below=4pt] at (v0) {0};
    \node[index label,right=4pt] at (v1) {1};
    \node[index label,left=4pt] at (v2) {2};
    \node[index label,left=4pt] at (v3) {3};
    \node[index label,right=4pt] at (v4) {4};
    \node[index label,left=4pt] at (v5) {5};
    \node[index label,right=4pt] at (v6) {6};
    \node[index label,right=4pt] at (v7) {7};
    \node[index label,above=4pt] at (v8) {8};
    \node[index label,right=4pt] at (v9) {9};
    \node[index label,below=4pt] at (v10) {10};
    \node[index label,above=4pt] at (v11) {11};
    \node[index label,right=4pt] at (v12) {12};
    \node[index label,right=4pt] at (v13) {13};
    \node[index label,left=4pt] at (v14) {14};
    \node[index label,above=4pt] at (v15) {15};
    \node[index label,left=4pt] at (v16) {16};
    \node[index label,above=4pt] at (v17) {17};
    \node[index label,right=4pt] at (v18) {18};
    \node[index label,above=4pt] at (v19) {19};
    \node[index label,below=4pt] at (v20) {20};
    \node[index label,left=4pt] at (v21) {21};
    \node[index label,below=4pt] at (v22) {22};
    \node[index label,left=4pt] at (v23) {23};
\end{tikzpicture}

%% file: tikz-code/five-trace-evaluations.tex
\begin{tikzpicture}[
    x=1.75cm,y=0.56cm,
    line cap=round,line join=round,
    font=\small
]
    \definecolor{googleBlue}{HTML}{4285F4}
    \definecolor{googleGreen}{HTML}{34A853}
    \definecolor{googleRed}{HTML}{EA4335}

    \draw[->,black!60,line width=0.5pt] (-2.35,0) -- (2.42,0)
        node[right] {$y$};
    \draw[->,black!60,line width=0.5pt] (0,-3.1) -- (0,5.35)
        node[above] {$F(y)$};
    \foreach \value in {-2,-1,0,1,2} {
        \draw[black!60,line width=0.45pt] (\value,-0.08) -- (\value,0.08);
        \node[below=3pt] at (\value,-0.08) {$\value$};
    }
    \foreach \height in {-2,2,4} {
        \draw[black!60,line width=0.45pt] (-0.035,\height) -- (0.035,\height);
        \node[left=3pt] at (-0.035,\height) {$\height$};
    }
    \foreach \value in {-2,-1,1,2} {
        \draw[black!25,densely dotted] (\value,0) -- (\value,1);
    }

    \draw[googleBlue,line width=1pt]
        plot[domain=-2.08:2.08,samples=201,smooth]
        (\x,{\x*(\x*\x-1)*(\x*\x-4)+1});
    \draw[googleRed!85!black,dashed,line width=0.7pt]
        (-2.28,1) -- (2.28,1)
        node[right,font=\small] {$1$};
    \foreach \value in {-2,-1,0,1,2} {
        \filldraw[fill=googleGreen,draw=white,line width=0.5pt]
            (\value,1) circle[radius=2.8pt];
    }
\end{tikzpicture}

%% file: 5-conclusion.tex

\section{Conclusion and Future Directions}
\label{sec:conclusion}

The one-parameter results give a first illustration of how the monomial Cayley graph program can turn algebraic relations into information about colorings.
The multiplier method uses these relations to construct colorings, while finite subgraphs such as the sextic witness show when additional colors are necessary.
The next steps are to complete the one-parameter classification and to understand how several parameters can interact to require more colors.

Two conjectures remain.
Conjecture~\ref{conj:four-color}, the uniform four-color bound, is open only for noncyclotomic algebraic integers of degree at least ten.
For Conjecture~\ref{conj:noninteger-three-color}, only algebraic nonintegers with leading coefficient two and degree at least ten remain.
Together with the bipartite criterion, these conjectures would reduce the classification to distinguishing the algebraic integer parameters that require three colors from those that require four.

Preliminary exploratory work with additional cyclotomic tests suggests that the four-color problem through degree twenty-six can be reduced to a finite collection of unresolved minimal polynomials.
We do not develop these extensions here.
One remaining example is the degree-twenty polynomial $f_\ast(x)=x^{10}F_\ast(x+x^{-1})$, where
\[
F_\ast(y)=y(y^2-1)(y^2-4)Q_\ast(y)-1
\]
and $Q_\ast(y)=y^5+y^4-4y^3-3y^2+3y+1$.
Thus all five integer evaluations from Subsection~\ref{subsec:degree-eight} equal $-1$, so they give no coloring target.
Four-colorability for its unit-modulus roots remains unresolved.

Gross and McMullen's Salem-polynomial construction~\cite[Theorem~7.3]{GrossMcMullen2002} gives a general limitation of fixed-order tests, stated in Theorem~\ref{thm:salem-finite-orders}.
For every prescribed finite set of cyclotomic orders, there are irreducible Salem polynomials of arbitrarily large degree whose evaluations at those orders are units.
Each of these evaluations gives the trivial quotient, so no fixed finite collection of orders supplies a coloring target for every parameter.
A uniform proof by cyclotomic targets must therefore select an order from the minimal polynomial itself.
It must also account for the order-dependent threshold $B_m$ in Theorem~\ref{thm:cyclotomic-norm}: growth of the norms alone does not ensure that they exceed their respective thresholds.

For a noncyclotomic algebraic integer parameter with minimal polynomial of degree $d$, the recurrence for successive character values defines an automorphism of $\T^d$.
A forward orbit contained in $[1/4,3/4]^d$ is sufficient for a four-coloring: the corresponding character stays separated from zero on every allowed step.
Lindenstrauss and Schmidt's work on symbolic representations of nonexpansive automorphisms, including those associated with Salem polynomials, provides a related setting for studying these orbits~\cite{LindenstraussSchmidt2005}.

To pass to several parameters, return to the Moser spindle in Figure~\ref{fig:moser-spindle}.
Its parameters $\alpha$ and $\beta$ have squared cosines $1/4$ and $11/12$, respectively.
Each individual parameter gives a three-colorable graph, while the spindle forces at least four colors in $\MonCay(\alpha,\beta)$.
The connecting edge makes the interaction explicit: the mixed relation $(1+\alpha)(\beta^2-1)=\alpha^2\beta$ links the two diamonds and prevents their three-colorings from fitting together.
A character must respect these relations while separating every mixed monomial step from zero.

This example suggests beginning with parameters $\alpha=e^{i\theta}$ for which $q=\cos^2\theta$ is rational.
The family builds on the algebraic rotation constructions explored in Polymath16~\cite{Mixon2018PolymathThird}; Conway, Radin, and Sadun studied rational linear relations among angles with rational squared trigonometric functions~\cite{ConwayRadinSadun1999}.
Each parameter satisfies $\alpha^4+(2-4q)\alpha^2+1=0$, so it has degree at most four and gives a three-colorable graph by Corollary~\ref{thm:degree-six-bound}.
The appendix's embeddings of the five-chromatic graphs of de Grey and Parts use four parameters from this family~\cite{deGrey2018,Parts2020}.

The construction also extends to higher dimensions by applying products of chosen orthogonal transformations to a fixed unit vector.
Their orbit generates the additive vertex group, with the orbit vectors and their negatives as steps.
Unlike planar rotations, these transformations need not commute, so their order matters and their relations must be expressed as words in the transformations.
The vertex group remains abelian, leaving quotient maps and separated additive characters available as coloring tools.

Returning to the plane, the multiparameter problem has an extremal formulation.
For each positive integer $r$, let
\[
M(r)=\max\bigl\{\chi(\MonCay(\mathcal A)):
\mathcal A\subseteq\{z\in\C:|z|=1\},\ 1\leq|\mathcal A|\leq r\bigr\}.
\]
The maximum exists because the possible values are integers bounded above by $\chi(\R^2)\leq7$, and $M(r)$ is nondecreasing.
The sextic gives $M(1)\geq4$, with equality predicted by Conjecture~\ref{conj:four-color}; the four-parameter embeddings give $M(4)\geq5$.
Can two parameters already force five colors?
The same question can be asked with all parameters restricted to rational squared cosines.
In that restricted family, the one-parameter maximum is three, attained by a sixth root of unity.

The unrestricted function reaches the chromatic number of the plane for some finite $r$.
Indeed, compactness gives a finite connected unit-distance graph with chromatic number $\chi(\R^2)$~\cite{deBruijnErdos1951}.
The algebraic-coordinate argument in the introduction embeds this graph using finitely many algebraic edge directions.
Choosing those directions as parameters places the graph in a monomial Cayley graph, giving $M(r)=\chi(\R^2)$ for a sufficiently large finite $r$.

The lower bound of OpenAI~\cite{OpenAI2026Plane} therefore implies that $M(r)\geq6$ for some finite $r$.
Determining the least such $r$, and exhibiting parameters that attain it, are concrete problems for the multiparameter program.
The preprint obtains the existence of a finite graph that is not five-colorable through compactness, following an analytic and geometric argument.
Can a finite obstruction be extracted from that argument and embedded using a small collection of monomial generators?
An explicit construction would allow its coloring obstruction to be studied through the algebraic relations among those generators.
The broader question remains: how few parameters suffice to attain the chromatic number of the plane, and which relations make them do so?

%% file: 6-appendix.tex

\section{Limits of Fixed Cyclotomic Tests}
\label{app:salem-obstruction}

A finite collection of cyclotomic evaluations can miss minimal polynomials of arbitrarily large degree.
We apply Gross and McMullen's construction of Salem polynomials~\cite[Theorem~7.3]{GrossMcMullen2002} to make each prescribed evaluation a unit, so the corresponding quotient ring is trivial.
It uses the trace variable $y=x+x^{-1}$ to arrange the evaluations and control the roots at the same time.

For example, consider the orders $1$, $2$, $3$, and $6$.
Their traces $\zeta_m+\zeta_m^{-1}$ are $2$, $-2$, $-1$, and $1$, respectively.
Set $H(y)=(y^2-1)(y^2-4)$ and, for a positive integer $T$, define
\[
F_T(y)=(y-T)(y^2-1)(y^2-4)-1.
\]
The reciprocal polynomial $f_T(x)=x^5F_T(x+x^{-1})$ then satisfies $f_T(\zeta_m)=-\zeta_m^5$ at all four orders.
These values are units.
For sufficiently large $T$, four roots of $F_T$ lie in $(-2,2)$ and the fifth lies beyond $2$.
The first four lift to roots on the unit circle, while the last lifts to a positive real reciprocal pair.
The proof below also establishes irreducibility for all sufficiently large $T$.
Thus this example produces Salem polynomials: monic irreducible reciprocal integer polynomials with one positive real reciprocal pair off the unit circle and all remaining roots on it.

The following consequence also follows from Za\"imi's theorem on Salem numbers whose prescribed powers minus one are units~\cite[Theorem~2]{Zaimi2023}.
We give a direct proof using the Gross--McMullen construction to explain how the prescribed orders enter the argument.

\begin{theorem}
    \label{thm:salem-finite-orders}
    For every finite set $\mathcal M$ of positive integers and every degree bound $D$, there is an irreducible Salem polynomial $f$ of degree greater than $D$ such that $f(\zeta_m)$ is a unit in $\Z[\zeta_m]$ for every $m\in\mathcal M$.
\end{theorem}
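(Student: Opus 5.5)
We follow the example preceding the statement, working throughout in the trace variable. Enlarge $\mathcal M$ if necessary so that it contains $1$ and $2$. Let $t_m=\zeta_m+\zeta_m^{-1}$ and let $P(y)\in\Z[y]$ be the product of the minimal polynomials of the distinct $t_m$ with $m\in\mathcal M$. Each factor is monic, and its roots $2\cos(2\pi k/m)$ lie in $[-2,2]$, so every root of $P$ lies in $[-2,2]$. For a positive integer $T$ and a fixed sign $s=\pm1$, define
\[F_T(y)=(y-T)P(y)^2+s\qquad\text{and}\qquad f_T(x)=x^{n}F_T(x+x^{-1}),\]
where $n=\deg F_T=2\deg P+1$. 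This generalizes the example with $H=(y^2-1)(y^2-4)$; squaring $P$ is a safeguard for the interlacing step below. Then $F_T(t_m)=s$, so $f_T(\zeta_m)=s\zeta_m^{n}$ is a unit for every $m\in\mathcal M$. The polynomial $f_T$ is monic, reciprocal, and has integer coefficients.

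Degree growth does not come from $T$, since $\deg F_T$ is fixed. Instead, adjoin to $\mathcal M$ additional orders, for instance enough primes, so that $\deg P$ exceeds $D$. Enlarging $\mathcal M$ only strengthens the conclusion.

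The root count is a perturbation argument. Each root $\tau$ of $P$ with $|\tau|<2$ is a double root of $P^2$, and $(y-T)P(y)^2$ has sign $-1$ near $\tau$ for $T>2$. If $s=+1$, adding $s$ splits $\tau$ into two nearby real roots of $F_T$ provided $T$ is large. Alternatively, write $F_T(y)=0$ as $P(y)^2=-s/(y-T)$, a level of size about $1/T$, so each double zero of $P^2$ yields two real solutions close to $\tau$. One must check that these solutions stay inside $(-2,2)$. The endpoint roots $\tau=\pm2$ (from $m=1,2$) need separate care: we must verify that the split roots near $\pm2$ fall on the correct side, or else replace the factor $(y\mp2)^2$ by $(y\mp2)$ to a single power, as in the example, and check signs directly. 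Finally, $F_T(y)\approx(y-T)P(T)^2$ produces exactly one root near $T>2$. Once $2\deg P$ roots lie in $(-2,2)$ and one root exceeds $2$, every root of $F_T$ is accounted for. Each root $y\in(-2,2)$ lifts through $x^2-yx+1=0$ to a conjugate pair on the unit circle, and the large root lifts to a real reciprocal pair $\gamma^{\pm1}$ with $\gamma>1$.

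\textbf{Irreducibility.} This is the main obstacle, and I would handle it by the standard Salem argument. Any monic factor of $f_T$ that avoids $\gamma$ has all its roots on the unit circle, so by Kronecker's theorem it is a product of cyclotomic polynomials. The factor containing $\gamma$ is irreducible. It therefore suffices to rule out cyclotomic factors, that is, to show $F_T(t_m)\ne0$ for every $m$ with $\varphi(m)\le 2n$. For $m\in\mathcal M$ this holds because $F_T(t_m)=s$. For the finitely many other $m$, $F_T(t_m)=(t_m-T)P(t_m)^2+s$ with $P(t_m)\ne0$, which grows in absolute value with $T$ and so is nonzero for large $T$. This follows Gross--McMullen. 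Hence $f_T$ is an irreducible Salem polynomial of degree $2n>D$.
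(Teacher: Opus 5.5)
\textbf{Gap: the endpoints $\pm2$.} The construction you actually write down fails at $\pm2$. Because you put $1,2\in\mathcal M$, the factors $y\mp2$ occur in $P$, so $\pm2$ are double roots of $P^2$. Write $P(y)=(y-2)P_1(y)$ with $P_1(2)\ne0$. Your choice $s=+1$ then gives $F_T(y)=1-(T-y)(y-2)^2P_1(y)^2$. This equals $1$ at $y=2$ and is negative at both $2\pm\varepsilon$ once $T$ is large. So $F_T$ has a root in $(2,2+\varepsilon)$, and likewise one in $(-2-\varepsilon,-2)$. These lift to a positive and a negative real reciprocal pair off the unit circle, so $f_T$ is not a Salem polynomial. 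Your Kronecker step also breaks, since a factor avoiding $\gamma$ may contain these roots.

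The other sign is no better: with $s=-1$ one has $F_T\le-1$ on $(-\infty,T)$, so $F_T$ has only one real root. This is forced by the local shape. An even power of $y\mp2$ makes $F_T-s$ vanish to even order at $\pm2$, so nearby roots either do not exist or straddle $\pm2$; neither of your two options can place both split roots inside. Your squared construction with $s=+1$ is fine for the orders $m\ge3$. The trouble is that the theorem must cover $1,2\in\mathcal M$, and then $y\mp2$ has to appear to an odd power.

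\textbf{The repair.} Your fallback is the correct fix, but it forces $s=-1$ and reverses the sign analysis you gave for interior roots. Take
\[F_T(y)=(y-T)(y^2-4)P_0(y)^2-1,\]
where $P_0$ is the product of the minimal polynomials of the $t_m$ over the orders $m\ge3$ (after enlarging for degree). On $[-2,2]$ the product $(y-T)(y^2-4)P_0(y)^2$ is nonnegative and vanishes exactly at $\pm2$ and at the roots of $P_0$. Off that interval, $F_T\le-1$ on $(-\infty,-2]\cup[2,T]$. Hence for large $T$:
\begin{itemize}
\item each root of $P_0$ produces two roots of $F_T$ nearby;
\item each endpoint produces one root just inside $(-2,2)$;
\item there is one root in $(T,T+1)$.
\end{itemize}
This accounts for all $2\deg P_0+3$ roots. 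With this change the rest of your argument goes through: the units $f_T(\zeta_m)=-\zeta_m^{\deg F_T}$, degree growth by adjoining orders, and irreducibility via Kronecker together with $|F_T(t_m)|\to\infty$ for the finitely many remaining $m$.

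\textbf{Comparison with the paper.} The paper takes a squarefree $H$ of even degree containing $y\pm2$ and sets $F_T=(y-T)H-1$, so each root of $H$ yields one sign change. Squaring the interior factors, as you do, removes the paper's need to fix the parity of $\deg H$ with factors $\Psi_{3^k}$. Your growth-in-$T$ argument replaces the paper's observation that each $\Psi_n\nmid H$ divides $F_T$ for at most one $T$.
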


\begin{proof}
    For $m\geq3$, let $\Psi_m$ be the minimal polynomial of $\zeta_m+\zeta_m^{-1}$.
    Its roots are distinct and lie in $(-2,2)$, and $\deg\Psi_m=\varphi(m)/2$.
    Different orders give disjoint root sets: the trace determines the pair consisting of a root of unity and its inverse.
    Start with the product of $y-2$, $y+2$, and the distinct $\Psi_m$ for $m\in\mathcal M\setminus\{1,2\}$.
    Adjoin further distinct trace factors so that the resulting monic polynomial $H$ has even degree $h$ and $2h+2>D$.
    This is always possible: unused factors $\Psi_{3^k}$ have odd degree and can correct the parity, while unused factors $\Psi_{2^k}$, for $k\geq3$, have even degrees tending to infinity.
    Every root of $H$ is simple, with its smallest and largest roots equal to $-2$ and $2$.

    Set $F_T(y)=(y-T)H(y)-1$.
    We first locate its roots for large positive $T$.
    Around each interior root of $H$, choose a small interval contained in $(-2,2)$ on whose endpoints $H$ has opposite signs, with these intervals pairwise disjoint.
    For large $T$, the endpoint signs of $F_T$ are opposite as well, giving a root in each interval.
    Since $H$ is monic of even degree, $H'(-2)<0$ and $H'(2)>0$.
    Hence $H$ is negative immediately inside both endpoints, where $F_T$ becomes positive as $T$ grows.
    But $F_T(-2)=F_T(2)=-1$, so the two remaining roots near the endpoints also lie strictly inside $(-2,2)$.
    Finally, $F_T(T)=-1$ and $F_T(T+1)=H(T+1)-1>0$ for large $T$.
    We have therefore found all $h+1$ roots: $h$ distinct roots in $(-2,2)$ and one in $(T,T+1)$.

    Only finitely many of these integer values of $T$ can give a reducible $F_T$.
    Indeed, any factorization has a monic irreducible factor $Q$ that omits the unique root beyond $2$.
    All roots of $Q$ lie in $(-2,2)$, so Kronecker's theorem implies that $Q=\Psi_n$ for some $n\geq3$~\cite{Kronecker1857}.
    There are only finitely many possibilities with $\deg\Psi_n\leq h$.
    A factor of $H$ cannot divide $F_T$, since $F_T\equiv-1\pmod H$.
    Any other $\Psi_n$ can divide $F_T$ for at most one value of $T$: if it divided both $F_T$ and $F_U$ for $T\ne U$, their difference $(U-T)H$ would force it to divide $H$.
    Choose a sufficiently large integer $T$ outside these finitely many exceptions.

    The reciprocal lift
    \[
    f_T(x)=x^{h+1}F_T(x+x^{-1})
    \]
    is a monic integer polynomial of degree $2h+2$.
    It is irreducible as well.
    To see this, choose a root $a\in(-2,2)$ of $F_T$ and a root $\lambda$ of $x^2-ax+1$.
    Then $\Q(a)\subset\R$, while $\lambda$ is nonreal, so $[\Q(\lambda):\Q(a)]=2$.
    Since $a=\lambda+\lambda^{-1}$ and $F_T$ is irreducible of degree $h+1$, the degree of $\lambda$ is $2h+2$.
    Its minimal polynomial is therefore $f_T$.
    The root locations established above give exactly one positive real reciprocal pair off the unit circle, so $f_T$ is a Salem polynomial.

    For every $m\in\mathcal M$, the trace $\zeta_m+\zeta_m^{-1}$ is a root of $H$.
    Consequently,
    \[
    f_T(\zeta_m)=-\zeta_m^{h+1},
    \]
    a unit in $\Z[\zeta_m]$.
    The choice of $h$ gives the required degree bound.
\end{proof}

Each nonreal root of the polynomial in Theorem~\ref{thm:salem-finite-orders} is an admissible unit-modulus parameter, but every prescribed quotient $\Z[\zeta_m]/(f(\zeta_m))$ is trivial.
Thus a uniform coloring argument based on cyclotomic evaluations must allow the order to depend on the minimal polynomial.

\section{Embedding Five-Chromatic Graphs}
\label{app:de-grey}

The Moser spindle supplies the starting point for two explicit five-chromatic embeddings.
We first recall a four-coloring of its two-parameter monomial Cayley graph, then introduce the additional rotations used in de Grey's construction.
Parts's smaller graph admits a certificate using the same spindle parameters and a different fourth parameter.
The geometric constructions and five-chromaticity are due to de Grey and Parts~\cite{deGrey2018,Parts2020}; our task is to express their vertices and edges in the monomial framework.

\subsection{The Spindle Parameters}

Retain $\alpha=(1+i\sqrt3)/2$ and $\beta=(\sqrt{33}+i\sqrt3)/6$ from the introduction.
They satisfy $\alpha^2-\alpha+1=0$ and the mixed relation $(1+\alpha)(\beta^2-1)=\alpha^2\beta$.
Following Speyer's finite-field coloring method~\cite{Speyer2018FiniteField}, we send both parameters to $u$ in $\mathbb F_4=\mathbb F_2[u]/(u^2+u+1)$.

To justify this assignment, work first in $A=\Z[\alpha,1/3]$ and put $t=2\alpha-1$.
Then $t^2=-3$, and $\beta$ satisfies the monic relation $\beta^2-(t/3)\beta-1=0$.
Its discriminant $11/3$ is not a square in $\Q(\alpha)$, whose real elements are rational, so evaluation identifies $A[\beta]$ with $A[W]/(W^2-(t/3)W-1)$.
The map $A\to\mathbb F_4$ sending $\alpha$ to $u$ sends $t/3$ to $1$, and the relation for $W$ becomes $W^2+W+1=0$.
Thus $\beta\mapsto u$ extends this map, and restriction gives a homomorphism $\Z[\alpha,\beta]\to\mathbb F_4$.
Every allowed step $\pm\alpha^a\beta^b$ has nonzero image $u^{a+b}$, so the four field elements give a proper coloring.
Together with the spindle, this proves
\[
\chi(\MonCay(\alpha,\beta))=4.
\]

\subsection{De Grey's Graph}

De Grey builds his graph from a $39$-point seed by reflection, rotation, and translation~\cite[Section~5.1]{deGrey2018}.
Let $S$ be that seed, identifying $(x,y)$ with $x+iy$, and retain the order of his coordinate list.
We use the rotated seed $P=\beta^2S=\{p_1,\ldots,p_{39}\}$ so that every coordinate is an integer polynomial in $\alpha$ and $\beta$.
For example, the seed vertex $1/3$ becomes
\[
p_2=\frac{\beta^2}{3}=2\beta^2-1-\beta^4,
\]
while $(\sqrt{33}+13+i(\sqrt{11}-\sqrt3))/12$ becomes $p_{22}=1+\alpha\beta$.
Table~\ref{tab:de-grey-seed} gives the full list.

\begin{table}[H]
    \centering
    \small
    \renewcommand{\arraystretch}{1.15}
    \begin{tabular}[t]{|r|l|}
        \hline
        $i$ & $p_i$ \\
        \hline
        $1$ & $0$ \\
        $2$ & $2\beta^2-1-\beta^4$ \\
        $3$ & $\beta^2$ \\
        $4$ & $2\beta^2$ \\
        $5$ & $\alpha\beta^3-\alpha^2\beta$ \\
        $6$ & $\alpha^2(\beta-\beta^3)$ \\
        $7$ & $1+\alpha+\alpha^2\beta^3$ \\
        $8$ & $(1+\alpha)\beta^2$ \\
        $9$ & $2\beta^2-1$ \\
        $10$ & $\alpha\beta^4-\alpha^2\beta$ \\
        $11$ & $1-\beta+\beta^3$ \\
        $12$ & $\beta^3+\alpha^2(\beta-1)$ \\
        $13$ & $\alpha\beta^4-\alpha^2\beta^3$ \\
        $14$ & $\beta^3$ \\
        $15$ & $\beta+\alpha\beta^2$ \\
        $16$ & $\beta^3+\alpha\beta^4-\alpha^2\beta$ \\
        $17$ & $\alpha^2(1-\beta^3)$ \\
        $18$ & $\beta+\beta^2-1$ \\
        $19$ & $\beta^4-\beta^2+\beta$ \\
        $20$ & $\beta+\alpha\beta^2-\beta^4$ \\
        \hline
    \end{tabular}
    \hspace{2em}
    \begin{tabular}[t]{|r|l|}
        \hline
        $i$ & $p_i$ \\
        \hline
        $21$ & $\alpha^2(1-\beta)$ \\
        $22$ & $1+\alpha\beta$ \\
        $23$ & $\alpha\beta^2-\alpha^2\beta^4$ \\
        $24$ & $\beta+\beta^2+\alpha^2\beta^3$ \\
        $25$ & $\beta^3+\beta^4+\alpha^2(\beta^2-1)$ \\
        $26$ & $1+\alpha\beta^3$ \\
        $27$ & $\beta+\beta^2+\alpha^2\beta^4$ \\
        $28$ & $\alpha$ \\
        $29$ & $\beta^2-\beta^4+\alpha\beta$ \\
        $30$ & $\alpha\beta^3-\alpha^2\beta^2$ \\
        $31$ & $\beta^4-\alpha^2\beta^3-1$ \\
        $32$ & $1+\alpha^2\beta+\beta^3-\beta^4$ \\
        $33$ & $\alpha^2(1-\beta^2)$ \\
        $34$ & $\beta^3+\alpha^2(\beta^2-1)$ \\
        $35$ & $\beta^4-\alpha^2\beta-1$ \\
        $36$ & $(1-\alpha^2)\beta^4-\beta$ \\
        $37$ & $1+\alpha-\beta$ \\
        $38$ & $\beta^4-\beta-\alpha^2\beta^3$ \\
        $39$ & $\alpha\beta^4-\alpha^2\beta^2$ \\
        \hline
    \end{tabular}
    \caption{Polynomial coordinates of the rotated seed $P=\beta^2S$, in de Grey's order.}
    \label{tab:de-grey-seed}
\end{table}

Reflecting the original seed and rotating through multiples of $\pi/3$ gives
\[
T=\bigcup_{j=0}^{5}\alpha^j\bigl(P\cup\beta^4\overline P\bigr).
\]
Here the bar denotes complex conjugation; the factor $\beta^4$ appears because $\beta^2\overline S=\beta^4\overline P$.
All vertices of $T$ still lie in $\Z[\alpha,\beta]$.
Indeed, conjugation replaces $\alpha$ by $\alpha^5$, and multiplying by $\beta^4$ clears the negative powers of $\beta$ in the conjugated seed table.
The set $T$ has $397$ vertices and $1974$ unit edges, whose differences belong to
\[
\mathcal D_0=\{\alpha^j\beta^k:0\leq j\leq5,\ 0\leq k\leq4\}.
\]

The next rotation joins two copies of $T$ with six additional unit edges.
Set $\delta=(-1+i\sqrt{15})/4$, which satisfies $2\delta^2+\delta+2=0$.
The multiplier $-\delta^2=(7+i\sqrt{15})/8$ rotates through $2\arcsin(1/4)$.
The sets $T$ and $-\delta^2T$ meet only at the origin, so their union has $793$ vertices.
Its six edges between the copies join $2\beta^2\alpha^j$ to $-2\delta^2\beta^2\alpha^j$, for $0\leq j\leq5$.
Their differences are monomials because
\[
(-2\delta^2\beta^2\alpha^j)-(2\beta^2\alpha^j)
=\delta\beta^2\alpha^j.
\]
Following de Grey, remove $p_2$ and $-p_2$ to obtain $B=(T\cup(-\delta^2T))\setminus\{p_2,-p_2\}$, with $791$ vertices and $3938$ edges.
Then translate to $U=B+2\beta^2$.
This places the former vertex $-2\beta^2$ at the origin and the former vertex $2\beta^2$ at $4\beta^2$.

The final rotation joins two copies of $U$ with a single additional unit edge.
Set $\epsilon=(-1+3i\sqrt7)/8$, which satisfies $4\epsilon^2+\epsilon+4=0$, and define
\[
V_{\mathrm{dG}}=\epsilon^2U\cup(-U).
\]
The copies have relative rotation $-\epsilon^2=(31+3i\sqrt7)/32$, through $2\arcsin(1/8)$, and meet only at the origin.
The linking vertices are $4\beta^2\epsilon^2$ and $-4\beta^2$, with difference
\[
(-4\beta^2)-(4\beta^2\epsilon^2)=\beta^2\epsilon.
\]
Thus the final graph $G_{\mathrm{dG}}$, with all unit-distance pairs as edges, has $1581$ vertices and $7877$ edges.
It is de Grey's graph after the rigid motion $z\mapsto\beta^2\epsilon(z+2)$ applied to the coordinates in~\cite[Section~5.1]{deGrey2018}.

Writing $D(G)$ for the edge differences in both orientations, the three kinds of edges give
\begin{equation}
\label{eq:de-grey-direction-certificate}
\begin{aligned}
D(G_{\mathrm{dG}})\subseteq{}&
\bigcup_{\ell,r\in\{0,2\}}\delta^\ell\epsilon^r\mathcal D_0\\
&\cup\{\alpha^j\beta^2\delta\epsilon^r:
0\leq j\leq5,\ r\in\{0,2\}\}\\
&\cup\{\pm\beta^2\epsilon\}.
\end{aligned}
\end{equation}
The first family comes from edges within copies of $T$, the second from the six linking edges in each copy of $B$, and the last from the final linking edge.
Exact verification of the counts and the absence of further unit edges is described in Subsection~\ref{subsec:embedding-verification}.

\begin{proposition}
    \label{prop:de-grey-embedding}
    The graph $G_{\mathrm{dG}}$ is a subgraph of $\MonCay(\alpha,\beta,\delta,\epsilon)$.
    Consequently, $\chi(\MonCay(\alpha,\beta,\delta,\epsilon))\geq5$.
\end{proposition}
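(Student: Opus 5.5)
The plan is to check the two conditions in the definition of $\MonCay(\alpha,\beta,\delta,\epsilon)$. Every vertex of $G_{\mathrm{dG}}$ must lie in $\Z[\alpha,\beta,\delta,\epsilon]$, and every edge difference must belong to $\Mon(\alpha,\beta,\delta,\epsilon)$. The chromatic bound then follows at once. De Grey's theorem says $G_{\mathrm{dG}}$, which is his graph after a rigid motion, has chromatic number five, and a subgraph can only lower the chromatic number. So the whole proof reduces to a membership check for vertices and for edge directions.

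\textbf{Vertices.} I would follow the construction stage by stage.
\begin{enumerate}
\item The entries of Table~\ref{tab:de-grey-seed} are integer polynomials in $\alpha,\beta$, so $P\subset\Z[\alpha,\beta]$.
\item For $\beta^4\overline P$, conjugation sends $\alpha\mapsto\alpha^5$ and $\beta\mapsto\beta^{-1}$. The table has $\beta$-degree at most four, so multiplying by $\beta^4$ clears every negative power. Rotating by $\alpha^j$ keeps us in the ring, so $T\subset\Z[\alpha,\beta]$.
\item Multiplying by $-\delta^2$, translating by $2\beta^2$, and multiplying by $\epsilon^2$ or $-1$ all preserve $\Z[\alpha,\beta,\delta,\epsilon]$.
\end{enumerate}
Hence $V_{\mathrm{dG}}$ lies in the ring.

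\textbf{Edges.} Each edge difference must be a signed monomial. I would invoke the certificate~\eqref{eq:de-grey-direction-certificate}. Its right-hand side consists of elements $\pm\alpha^j\beta^k\delta^\ell\epsilon^r$ with nonnegative exponents, so each is in $\Mon(\alpha,\beta,\delta,\epsilon)$. The negative signs are harmless, since $-1=\alpha^3$ and in any case $\Mon$ includes signs. To justify the certificate, I would split the edges into three kinds.
\begin{enumerate}
\item \emph{Edges inside a copy of $T$.} Their differences lie in $\mathcal D_0$ up to sign, which is the computational claim about $T$. Under the rotations $-\delta^2$ and $\epsilon^2$, and the composite rotation, these become $\delta^\ell\epsilon^r\mathcal D_0$ up to sign with $\ell,r\in\{0,2\}$. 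Translations do not change differences.
\item \emph{The six linking edges in each copy of $B$.} These have difference $\delta\beta^2\alpha^j$, using $2\delta^2+\delta+2=0$. The subsequent rotation by $\epsilon^2$ or $-1$ multiplies this by $\epsilon^r$ up to sign.
\item \emph{The final linking edge.} This has difference $\beta^2\epsilon$, using $4\epsilon^2+\epsilon+4=0$.
\end{enumerate}

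\textbf{Main obstacle.} The hard step is completeness. $G_{\mathrm{dG}}$ is defined with all unit-distance pairs as edges, so I must rule out any unit-distance pair whose difference falls outside the certificate, for instance an accidental pair between the two copies of $T$ or of $U$ beyond the designated links. There is no slick algebraic argument for this that I can see. I would use the exact computation of Subsection~\ref{subsec:embedding-verification}, which I take as given:
\begin{enumerate}
\item represent the vertices exactly in the number field $\Q(\alpha,\beta,\delta,\epsilon)$;
\item compute all unit-distance pairs exactly;
\item confirm the counts of $1581$ vertices and $7877$ edges, and check that every difference lies in the displayed set.
\end{enumerate}
With that verification in hand, the subgraph inclusion holds and the bound $\chi\ge5$ follows.
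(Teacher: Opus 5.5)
Your proposal is correct and follows the paper's own argument. The paper places every vertex in $\Z[\alpha,\beta,\delta,\epsilon]$ via the seed table and the construction, shows each edge difference is a signed monomial through the certificate~\eqref{eq:de-grey-direction-certificate} (signs absorbed by $\alpha^3=-1$), relies on the exact computation of Subsection~\ref{subsec:embedding-verification} for completeness of the unit-distance edges, and concludes with de Grey's five-chromaticity theorem.
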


\begin{proof}
    The seed table and the construction place every vertex in $\Z[\alpha,\beta,\delta,\epsilon]$.
    Every difference in~\eqref{eq:de-grey-direction-certificate} is a signed monomial with nonnegative exponents; both signs in the first two families are included because $\alpha^3=-1$.
    Thus every edge is an allowed step.
    The lower bound follows from de Grey's five-chromaticity result~\cite{deGrey2018}.
\end{proof}

\subsection{The Parts Graph}
\label{subsec:parts-embedding}

Parts's five-chromatic graph $G_{\mathrm P}$ has $509$ vertices and $2442$ edges~\cite[Section~6]{Parts2020}.
It combines a $374$-vertex piece $L_{\mathrm P}$ with a $136$-vertex piece $S_{\mathrm P}$, rotated by the multiplier $-\delta^2$ used above.
The pieces share only the origin, giving $V(G_{\mathrm P})=L_{\mathrm P}\cup(-\delta^2S_{\mathrm P})$.
We use the exact coordinate list specified in~\cite{PartsCoordinates}.

The new connecting directions are captured by $\kappa=(\sqrt5+i\sqrt{11})/4$.
With this fourth parameter, all signed edge directions have the form
\begin{equation}
\label{eq:parts-direction-certificate}
D(G_{\mathrm P})=\{\alpha^j r:0\leq j\leq5,\ r\in\mathcal R_{\mathrm P}\},
\end{equation}
where
\[
\begin{aligned}
\mathcal R_{\mathrm P}={}&\{\beta^k:-2\leq k\leq2\}
\cup\{\beta^k\delta^2:-1\leq k\leq1\}\\
&\cup\{\beta^{-1}\delta,\beta\delta\}
\cup\{\delta\kappa^{-1},\delta\kappa\}.
\end{aligned}
\]
These twelve representatives give $72$ signed directions.
The negative exponents can all be cleared by the same rotation: multiplying by $\beta^2\kappa$ makes every direction a signed monomial with nonnegative exponents.

\begin{proposition}
    \label{prop:parts-embedding}
    The rotation $z\mapsto\beta^2\kappa z$ gives a graph embedding
    \[
    G_{\mathrm P}\hookrightarrow\MonCay(\alpha,\beta,\delta,\kappa).
    \]
    Consequently, $\chi(\MonCay(\alpha,\beta,\delta,\kappa))\geq5$.
\end{proposition}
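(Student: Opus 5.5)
The proof has the same shape as that of Proposition~\ref{prop:de-grey-embedding}, with one extra normalization. We must show three things: the rotated vertices lie in $\Z[\alpha,\beta,\delta,\kappa]$, every rotated edge difference is a signed monomial with nonnegative exponents, and the map is an embedding. The chromatic lower bound then follows from Parts's five-chromaticity result~\cite{Parts2020}.

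We begin with the edge directions. By~\eqref{eq:parts-direction-certificate}, each signed direction has the form $\alpha^j r$ with $r\in\mathcal R_{\mathrm P}$. Multiplying by $\beta^2\kappa$ gives the following.
\begin{itemize}
\item The representatives $\beta^k$ with $-2\leq k\leq2$ become $\beta^{k+2}\kappa$, with exponent of $\beta$ between $0$ and $4$.
\item The representatives $\beta^k\delta^2$ with $-1\leq k\leq1$ become $\beta^{k+2}\delta^2\kappa$.
\item The representatives $\beta^{\pm1}\delta$ become $\beta\delta\kappa$ and $\beta^3\delta\kappa$.
\item The representatives $\delta\kappa^{\pm1}$ become $\beta^2\delta$ and $\beta^2\delta\kappa^2$.
\end{itemize}
All exponents are nonnegative. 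Both signs are covered because $\alpha^3=-1$, and $\alpha^j$ with $0\leq j\leq5$ is already a nonnegative power. So every rotated direction lies in $\Mon(\alpha,\beta,\delta,\kappa)$.

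Next come the vertices. Each component of $G_{\mathrm P}$ is connected through the origin: the two pieces share only $0$, and each piece is connected. Any vertex $z$ is therefore reached from $0$ by a path whose steps are edge directions. Its image $\beta^2\kappa z$ is the sum of the rotated steps along that path, and so lies in $\Z[\alpha,\beta,\delta,\kappa]$. Connectivity of $L_{\mathrm P}$ and $S_{\mathrm P}$, together with the containment of the origin in each, comes from the coordinate list~\cite{PartsCoordinates}. If connectivity failed, we would instead express each vertex directly in these parameters, as in Table~\ref{tab:de-grey-seed}. The rotation is injective and preserves adjacency, since $G_{\mathrm P}$'s edges are exactly the listed unit pairs. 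Hence it is a graph embedding onto a subgraph of $\MonCay(\alpha,\beta,\delta,\kappa)$, which need not be induced. Pulling back any proper coloring of the Cayley graph gives $\chi(\MonCay(\alpha,\beta,\delta,\kappa))\geq\chi(G_{\mathrm P})=5$.

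The main obstacle is certifying~\eqref{eq:parts-direction-certificate} itself: that all $2442$ edge differences of the exact coordinate list are among the $72$ listed directions. This has to be done by exact arithmetic. We would work in the number field generated by $\sqrt3,\sqrt{11},\sqrt5,\sqrt{15}$, reducing the coordinates of~\cite{PartsCoordinates} and each candidate direction to a canonical basis and matching each edge difference against the list. This is the verification deferred to Subsection~\ref{subsec:embedding-verification}. Given that certificate, the remaining steps are bookkeeping.
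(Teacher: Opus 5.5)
Your proposal is correct and follows essentially the same route as the paper's proof. Multiplying the certificate~\eqref{eq:parts-direction-certificate} by $\beta^2\kappa$ gives nonnegative-exponent signed monomials, a path from the origin then places every rotated vertex in $\Z[\alpha,\beta,\delta,\kappa]$, and injectivity together with Parts's five-chromaticity gives the bound. One small correction: you only need $G_{\mathrm P}$ as a whole to be connected, which the paper asserts and its verifier checks. Your stronger claim that $L_{\mathrm P}$ and $S_{\mathrm P}$ are each connected is neither established nor required, since edges between the two pieces may be what connect them.
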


\begin{proof}
    The direction certificate shows that every rotated edge is an allowed step.
    Since $G_{\mathrm P}$ is connected and contains the origin, a path from the origin expresses each rotated vertex as a sum of these steps.
    Hence all rotated vertices belong to $\Z[\alpha,\beta,\delta,\kappa]$.
    The rotation is injective, and Parts's five-chromaticity result gives the lower bound~\cite{Parts2020}.
\end{proof}

Both embeddings use parameters with rational squared cosine.
The values for $\alpha$, $\beta$, $\delta$, $\epsilon$, and $\kappa$ are $1/4$, $11/12$, $1/16$, $1/64$, and $5/16$, respectively.
Each parameter alone gives a three-colorable graph by Corollary~\ref{thm:degree-six-bound}, while either four-parameter collection requires at least five colors.

\subsection{Exact Verification}
\label{subsec:embedding-verification}

The accompanying verification programs check the embeddings using rational arithmetic in the radicals $\sqrt3$, $\sqrt5$, $\sqrt{11}$, and $\sqrt7$.
They compare coordinate pairs by the identity $(v-u)\overline{(v-u)}=1$.
Pairs may first be excluded by rational interval bounds; every surviving pair is checked exactly.
Thus the edge lists include all unit-distance pairs, without a numerical tolerance.

The folder contains complete exact coordinate lists for both graphs, including an exact conversion of the Parts data~\cite{PartsCoordinates}.
The single embedding verifier applies the stated rigid motions and checks $1581$ distinct vertices and $7877$ unit edges for de Grey's graph, and $509$ distinct vertices and $2442$ unit edges for Parts's graph.
For each graph it checks that every edge difference is a signed monomial with nonnegative exponents in the respective four parameters.
It also checks connectedness and the vertex at zero, so a path from zero expresses every vertex as a sum of these monomials and certifies membership in the required ring.
The file \texttt{anc/README.md} gives the commands, data provenance, and expected outputs.
These computations verify the embeddings; the five-chromaticity results used in the propositions are those of the original papers.